\newcommand{\be}{\begin{eqnarray}}
\newcommand{\ee}{\end{eqnarray}}
\let\l=\labmda
\let\a=\alpha
\let\l=\lambda
\theoremstyle{definition}
\newtheorem{thm}{Theorem}[section] 
\newtheorem{conj}[thm]{Conjecture}
\newtheorem{prop}[thm]{Proposition}
\newtheorem{lem}[thm]{Lemma}
\newtheorem{cor}[thm]{Corollary}
\newtheorem{dfn}[thm]{Definition}
\newtheorem{ex}[thm]{Example}
\newtheorem{question}[thm]{Question}
\newtheorem{rem}[thm]{Remark}
\newtheorem{convention}[thm]{Convention}
\theoremstyle{remark}
\DeclareMathOperator{\conv}{conv}
\DeclareMathOperator{\lcm}{lcm}
\DeclareMathOperator{\height}{ht}
\DeclareMathOperator{\image}{Im}
\DeclareMathOperator{\volume}{vol}
\DeclareMathOperator{\zz}{\mathbb{Z}}
\DeclareMathOperator{\gcdvar}{gcd}
\DeclareMathOperator{\pyr}{Pyr}
\title{Thin simplices via modular arithmetic}
\author{Vadym Kurylenko}
\email{vadym.kurylenko@ovgu.de}
\begin{document}
\maketitle
\begin{abstract}    The local $h^*$-polynomial is a natural invariant of a lattice polytope appearing in Ehrhart theory and Hodge theory. 
 In this work, we study the question posed in \cite{GKZ}  concerning the classification of lattice simplices with vanishing local $h^*$-polynomial. Such simplices are called thin. We relate this question to linear codes and hyperplane arrangements over finite rings. This allows us to obtain a complete classification of the $4$-dimensional thin simplices, extending the previously known results in dimensions up to $3$. 
\end{abstract}

\section{Introduction}


\subsection{Local $h^*$-polynomial}
Consider a lattice $M \subseteq \mathbb{R}^d$  and a $d$-dimensional lattice polytope $\Delta$.  It is known \cite{ehrhart_sur_1962} that the number of lattice points in an integral dilation  $k \Delta$ is a polynomial in $k$. It is called the Ehrhart polynomial. This information can also be encoded in an invariant known as the $h^*$-polynomial. Namely, we have  \[1+\sum_{k \geq 1} \mid k  \Delta \cap M \mid t^k = \frac{h^*(\Delta,t)}{(1-t)^{d+1}}. \] 

The main object of study in this paper is the \textit{local $h^*$-polynomial}. This is a less-known variant of the $h^*$-polynomial that  takes into account the point counting of the faces of $\Delta$ and the combinatorics of the face lattice of the polytope. In the case of $\Delta$ being a simplex, it can be defined as an alternating sum of $h^*$-polynomials of  the faces of $\Delta$ 
\be  l^*(\Delta,t) \coloneqq \sum_{ \varnothing \subseteq F \subseteq \Delta} (-1)^{\dim \Delta - \dim F} h^*(F,t). \label{lstardef} \ee 
For a simplex it can also be computed as follows. 
Let $\Pi^\circ_\Delta$ be the open parallelepiped spanned by the vertices of $\Delta \times \{1\} \subseteq \mathbb{R}^{d+1} $, then $l^*(\Delta,t)$ is the generating function of the number of points in $\Pi_\Delta^\circ$ with a given last coordinate 
  \[ l^*(\Delta,t) = \sum_{\lambda \in \Pi_\Delta^\circ \cap M \oplus \mathbb{Z}} t^{\lambda_{d+1}}.  \]
 For this reason the local $h^*$-polynomial of a simplex is also known as \textit{box polynomial}. 
  
This notion first appeared in the work of Betke and McMullen \cite{betke_lattice_1985}. It was generalized to general polytopes by Stanley in \cite{stanley_subdivisions_1992}.  Later, it resurfaced independently in the work of Borisov and Mavlyutov \cite{borisov_string_2003}. They, along with Karu \cite{karu_ehrhart_2008}, showed that the coefficients of the local $h^*$ polynomial are non-negative. In  \cite{batyrev_mirror_1996} it was shown  that these coefficients are the top weight Hodge-Deligne numbers of a nondegenerate affine hypersurface in a torus defined by the Newton polytope $\Delta$. 

More recently, the local $h^*$-polynomial has been studied more extensively. In the work \cite{katz_local_2016} Katz and Stapledon described, among other things,  how $l^*(\Delta,t)$ behaves under polyhedral subdivisions.   The local $h^*$-polynomials of the simplices corresponding to the weighted projective spaces were studied in \cite{solus_local_2019} and those of the $s$-lecture hall simplices in \cite{gustafsson_derangements_2020}.
In \cite{villegas_mixed_2019} and \cite{roberts_hypergeometric_2022} $l^*(\Delta,t)$ 
 of circuits appeared in the study of integrality of factorial ratios and hypergeometric motives,  respectively. The work  \cite{bajo_local_2023} discusses the local $h^*$-polynomial of simplices whose coordinates are in Hermitian normal form with one non-trivial row.   

\subsection{Thin simplices}
One of the easy-sounding questions one may ask is when does $l^*(\Delta,t)$ vanish.
This problem 
first appeared in the work of Gel'fand, Kapranov and Zelevinskii
, see \citep[11.4.B]{GKZ}. They coined the following definition. 
\begin{dfn}
	We call a simplex $\Delta$ \textit{thin} if $l^*(\Delta,t) =0$. 
\end{dfn}
Recently this question has been revived and extended to general polytopes by Borger, Kretschmer and Nill in \cite{borger_thin_2023}. They were able to classify the three-dimensional thin lattice polytopes   as well as to characterize the thin Gorenstein polytopes.  Moreover, we refer to their work for a comprehensive review of local Ehrhart theory.



There is a construction that easily produces  thin simplices.  
 Let $\Delta_1$ and $\Delta_2$ be lattice polytopes of dimensions $d_1$ and $d_2$. 
The \textit{free join} of $\Delta_1$ and $\Delta_2$ is 
 \[ \Delta_1 \circ_{\zz} \Delta_2 \simeq \conv \left( \Delta_1 \times \{0^{d_2} \} \times \{ 0\}, \{0^{d_1} \} \times \Delta_2 \times \{1\}\right) \subseteq \mathbb{R}^{d_1+d_2+1}. \]  This construction is particularly nice from the Ehrhart-theoretic point of view since the
  (local) $h^*$-polynomial behaves multiplicatively under free joins, see \cite{henk_lower_2009} for 
 \[ h^*( \Delta_1 \circ_{\zz} \Delta_2,t) = h^*(\Delta_1,t) \cdot h^*(\Delta_2,t)\] 
 and  \cite{nill_gorenstein_2013} for
   \[ l^*( \Delta_1 \circ_{\zz} \Delta_2,t) = l^*(\Delta_1,t) \cdot l^*(\Delta_2,t).\] 
A notable example of a free join is a free join of a polytope $\Delta$ with a single point. 
In this case the resulting polytope $\pyr(\Delta)$ is called a \textit{lattice pyramid} and we have $l^*(\pyr(\Delta),t)=0$ and $h^*(\pyr(\Delta),t)=h^*(\Delta,t)$.  

 Clearly, if a free join is thin, then at least one of its factors must be thin. This leads to a natural question. 
\begin{question}
    Can we classify the thin simplices that are not free joins? 
\end{question}

In dimension two the answer to this question was given in \cite{GKZ}. In this case the only such thin simplex is twice the standard simplex $ 2 \Delta_2$.  In dimension three the answer was provided in \cite{borger_thin_2023}. They deduced that all three-dimensional thin simplices are lattice pyramids. 

In order to answer this question in dimension four we make use of the point of view on lattice simplices presented in \cite{batyrev_lattice_2013} which we review in Section \ref{section:simplices}.
The crucial idea is that for each lattice simplex $\Delta$ we can construct an extended linear code $C_\Delta$ over a finite cyclic ring  $\zz_{N_\Delta}$ of integers modulo $N_\Delta$ for some $N_\Delta \geq1$. This correspondence is in fact one-to-one up to the corresponding isomorphisms.  A linear code $C_\Delta$ of rank $m$  and length $d+1$ can be generated by the rows of an $m \times(d+1) $ matrix $g_\Delta$. Thus, we can reduce the study of a simplex $\Delta$ to the study of a matrix $g_\Delta$ with entries from $\zz_{N_\Delta}$. 

The property of $\Delta$ being thin translates to a simple property of $C_\Delta$.
 The simplex $\Delta$ is thin if and only if the corresponding linear code $C_\Delta$ has no words of maximal weight, i.e. each element of $C_\Delta$ contains a zero.  

One can go further and also use the language of hyperplane arrangements. The columns of the matrix $g_\Delta$ define a hyperplane arrangement $\mathcal{H}_\Delta$ inside $\zz^m_{N_\Delta}$.  
 The thin simplices then correspond to the hyperplane arrangements $\mathcal{H}_\Delta$ that have empty  complement.

\subsection{Main result}
The above point of view allows us to obtain a classification of the four-dimensional thin lattice simplices that are not lattice pyramids or free joins. It is summarized in the following theorem proved in Section \ref{sect: 4d}.  But before proceeding with the theorem let us briefly introduce a few notions from the theory of lattice polytopes.  A lattice polytope is called \textit{spanning} if  its integral points affinely span the ambient lattice.   The \textit{lattice width} of a full-dimensional lattice polytope $\Delta \subseteq \mathbb{R}^d$  is the minimal distance between two parallel hyperplanes in $\mathbb{R}^d$ such that $\Delta$ lies between them.  
\begin{thm} \label{main theorem} 
Let $\Delta$ be a four-dimensional thin simplex that is not a free join. Then $\Delta$ is either one of the $6$ sporadic cases in the Table \ref{sporadictable}  or it belongs to the  one-parameter family of non-spanning simplices of width $1$ given below the table. 
\end{thm}
\begin{table}[h] 
\centering
\renewcommand{\arraystretch}{1.05}
\begin{tabular}{|c|p{4cm}|c|c|c|c|}
\hline
\centering Case & \centering $g_\Delta$ & \centering  $N_\Delta$ &  $h^*(\Delta,t)$ & width & spanning \\ 
\hline
1 & \centering  $\begin{pmatrix}
1 & 1 & 0 & 0 & 0 \\
0 & 1 & 1 & 0 & 0 \\
0 & 0 & 1 & 1 & 0 \\
0 & 0 & 0 & 1 & 1
\end{pmatrix}$  \vspace{5pt}& \centering  2 & $5t^2 + 10t + 1$ &2 & yes \\
\hline
2 &  \centering $\begin{pmatrix}
0 & 0 & 1 & 1 & 1 \\
1 & 2 & 0 & 1 & 2
\end{pmatrix}$ \vspace{5pt} & 3 & $7t^2 + t + 1$ & 1 & no  \\
\hline
3 &  \centering $\begin{pmatrix}
0 & 0 & 1 & 1 & 2 \\
2 & 2 & 1 & 3 & 0
\end{pmatrix}$ \vspace{5pt} & 4 & $5t^2 + 2t + 1$ & 1 & no \\
\hline
4 &  \centering $\begin{pmatrix}
0 & 1 & 1 & 1 & 1 \\
2 & 0 & 1 & 2 & 3
\end{pmatrix}$ \vspace{5pt} & 4 & $t^3 + 11t^2 + 3t + 1$ & 2 & no \\
\hline
5 & \centering $\begin{pmatrix}
0 & 0 & 1 & 1 & 2 \\
2 & 2 & 0 & 2 & 2 \\
0 & 2 & 3 & 3 & 0
\end{pmatrix}$  \vspace{5pt} & 4 & $9t^2 + 6t + 1$ & 2 & yes \\
\hline
6 & \centering $\begin{pmatrix}
4 & 0 & 1 & 2 & 1 \\
4 & 4 & 0 & 4 & 4
\end{pmatrix}$  \vspace{5pt} & 8 & $t^3 + 11t^2 + 3t + 1$ & 2 & no \\
\hline
\end{tabular}
\caption{Sporadic thin simplices for $d=4$.}
\label{sporadictable}
\end{table}
For each even $N_\Delta \geq 2$ there is a thin simplex given by
\be \label{family1} g_\Delta = \begin{pmatrix}
	N_\Delta/2 & 0 & N_\Delta/2 & 0 & 0 \\
	0 & N_\Delta/2 & N_\Delta/2 & 1 & N_\Delta-1
\end{pmatrix} \ee  
with \[h^*(\Delta,t) = \left(\frac{3 N_\Delta}{2}  -1\right) t^2 + \frac{N_\Delta}{2} t +1.\] 

\begin{table}[h]
    \centering
    \begin{tabular}{|c|c|}
        \hline
        Case & Vertices \\
        \hline
        1 & $\left(0,\,0,\,0,\,0\right), \left(2,\,0,\,0,\,0\right), \left(0,\,2,\,0,\,0\right), \left(0,\,0,\,2,\,0\right), \left(0,\,0,\,0,\,2\right)$ \\ 
        \hline
        2 & $\left(-1,\,-1,\,0,\,1\right), \left(0,\,0,\,0,\,0\right), \left(0,\,1,\,1,\,1\right), \left(-1,\,0,\,1,\,-1\right), \left(1,\,-1,\,1,\,0\right)$ \\
        \hline
        3&  $\left(-1,\,1,\,0,\,0\right), \left(0,\,0,\,0,\,0\right), \left(-1,\,-1,\,-1,\,1\right), \left(1,\,1,\,-1,\,1\right), \left(0,\,0,\,1,\,1\right)$ \\
        \hline 
        4 & $\left(-1,\,1,\,-1,\,-1\right), \left(-1,\,-1,\,0,\,-1\right), \left(0,\,0,\,1,\,1\right), \left(1,\,1,\,0,\,-1\right), \left(0,\,0,\,-1,\,1\right)$ \\ 
        \hline
        5 & $\left(-1,\,0,\,1,\,-1\right), \left(0,\,2,\,-1,\,-1\right), \left(-1,\,0,\,-1,\,1\right), \left(1,\,0,\,-1,\,1\right), \left(0,\,0,\,1,\,1\right) $  \\ 
        \hline
        6 & $\left(0,\,0,\,0,\,-1\right), \left(0,\,-1,\,-1,\,-1\right), \left(1,\,-1,\,1,\,1\right), \left(-1,\,0,\,0,\,1\right), \left(1,\,1,\,-1,\,1\right)$ \\ 
        \hline
        family & $\left(-1, 0, 0, 0\right), \left(1, -1, -3, 2\right), \left(1, 0, 0, 0\right), \left(0, N/2, -N/2+1, 0\right), \left(0, 0, 1, 0\right)$ \\
        \hline
    \end{tabular}
    \caption{Vertices of the thin simplices from Theorem \ref{main theorem}.}
    \label{table:vertices}
\end{table}

We see that, contrary to the dimensions $\leq3$, in dimension $4$ there are infinitely many thin simplices that are not free joins.   


\subsection{Experiments and questions}
By using linear codes instead of simplices, it becomes somewhat easier to produce examples of thin simplices. By considering generating matrices of extended linear codes  over $\zz_{N_\Delta}$ for $N_\Delta$ small and with only a few rows, one can relatively quickly obtain a small database of thin simplices. 
The corresponding SageMath \cite{the_sage_developers_sagemath_2022} code is available online\footnote{\href{https://doi.org/10.5281/zenodo.18016483}{https://doi.org/10.5281/zenodo.18016483}}. 
Using the criterion from Lemma  \ref{FJ_criterion} we can quickly establish which simplices are definitely not free joins. In Table \ref{table: higher} the found data is presented, and further explanations about the search parameters are provided in Appendix \ref{appendix_data}. 
This is by far not a  classification. 
\begin{table}[h]
	\centering
	\begin{tabular}{|c|c|c|c|c|c|c|}
		\hline $d$ & $\#$& spanning & width = 1& width = 2 & empty &  $ \deg h^*( \Delta,t) \leq d/2$. \\
		\hline
		$5$ & 69  & 0 & 69  & 0 & 5 & 67   \\ 
		\hline
		$6$  & 704  & 4  & 655  & 49 & 35 & 541   \\ 
		\hline
		$7$ & 1071  & 0  & 1071 & 0 & 130&  1053   \\ 
		\hline
	\end{tabular}
	\caption{Thin simplices that are not free joins found experimentally}
	\label{table: higher}
\end{table}

It is interesting to look at different subclasses of thin simplices, as suggested in \cite{borger_thin_2023}.  An important class of lattice polytopes consists of spanning lattice polytopes.
Note  the scarcity of spanning thin simplices in the above table. 
In dimension $4$ we also have only $2$ spanning thin simplices.  Furthermore, it is somewhat surprising that we were not able to find any spanning thin simplices in dimensions 5 and $7$. This naturally leads to the following questions. 
\begin{question}
Are there finitely many spanning thin simplices that are not free joins in each dimension? Are there any in the odd-dimensional case? 
\end{question}
\noindent We discuss spanning thin simplices more in Subsection \ref{subsec:spanning}. 

Another important  property of a lattice polytope $\Delta$  
is its lattice width.  As discussed in \cite{borger_thin_2023}, it seems that thin lattice polytopes tend to have small width. 
To compute the lattice width we first make use of Proposition \ref{prop: Cayley}  from Subsection \ref{subsec:width} to single out width $1$ simplices and then we use Polymake \cite{assarf_computing_2017} to compute the width of the remaining examples. One can see that the number of thin simplices of width $\geq 2$ is rather small and in odd dimensions we were not able to find any. 
We would like to state a few conjectures in the strongest form possible. 
\begin{conj}\label{conj:width1}
    In each even dimension there are only finitely many thin simplices of lattice width $\geq 2$. 
\end{conj}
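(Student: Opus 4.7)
The plan is to mimic, in every even dimension $d$, the structural case-analysis that established Theorem \ref{main theorem} for $d=4$, but to convert it into a genuine finiteness statement for the modulus $N_\Delta$. As a first reduction, I would observe that every free join has lattice width exactly $1$: in the embedding $\Delta_1 \circ_\zz \Delta_2 = \conv(\Delta_1 \times \{0\} \times \{0\},\, \{0\} \times \Delta_2 \times \{1\})$, the last coordinate functional takes value $0$ on the first factor and $1$ on the second, so lattice pyramids and all genuine free joins have width $1$. Hence a thin simplex of width $\geq 2$ is automatically non-free-join, and the criterion of Remark \ref{FJ_criterion} rules out the ``block-diagonal'' shapes of $g_\Delta$. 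Combined with Proposition \ref{prop: Cayley}, which characterises width-$1$ simplices in terms of Cayley-type matrices, one obtains a concrete structural constraint on the admissible $g_\Delta$: neither split nor Cayley.

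Since simplices correspond bijectively (up to equivalence) to matrices over $\zz_{N_\Delta}$, finiteness in fixed dimension $d$ reduces to bounding both the rank $m$ of $g_\Delta$ and the modulus $N_\Delta$. The thin condition says that every codeword of $C_\Delta$ has at least one zero entry, giving
\[ |C_\Delta| \;\leq\; N_\Delta^{d+1} - (N_\Delta - 1)^{d+1} \;=\; O(N_\Delta^d), \]
so comparison with the expected growth of order $N_\Delta^m$ already forces $m \leq d$ and, for large $N_\Delta$, tightens the rank considerably. The next step would be to exploit the non-Cayley and non-free-join assumptions in the language of the hyperplane arrangement $\mathcal{H}_\Delta \subseteq \zz_{N_\Delta}^m$: thinness forces $\mathcal{H}_\Delta$ to cover the ambient space, non-Cayley forbids high-multiplicity hyperplanes, and non-free-join forces $\mathcal{H}_\Delta$ to be in an appropriate sense irreducible. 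From these three conditions one would try to derive an explicit upper bound on $N_\Delta$ depending only on $d$ via a counting argument over $\zz_{N_\Delta}$, after which the number of matrices $g_\Delta$ of bounded rank over a bounded ring is trivially finite.

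The main obstacle is extracting such a bound in a way that genuinely uses the parity of $d$. The even-dimensional assumption strongly suggests the argument must exploit the expansion
\[ l^*(\Delta,t) \;=\; \sum_{F \subseteq \Delta} (-1)^{d - \dim F}\, h^*(F,t) \;=\; 0 \]
in a sign-sensitive manner: in odd dimension the analogous vanishing is conjecturally even stronger (no non-trivial width-$\geq 2$ examples exist at all), so any proof that cleanly bounds $N_\Delta$ must degenerate in the odd case. A plausible route is to construct a signed count of flats of $\mathcal{H}_\Delta$ weighted by codimension parity, whose non-vanishing in even $d$ conflicts with $N_\Delta$ being large; this parallels the way Borisov--Mavlyutov--Karu positivity of $l^*$ combines with the vanishing $l^*=0$ to yield rigid boundary identities on $\Pi_\Delta$. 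Making this precise will, in my view, require a genuinely new combinatorial invariant of $\mathcal{H}_\Delta$ beyond the ad hoc arguments that sufficed for $d=4$, and I expect its difficulty to be comparable to that of carrying out the full classification in each fixed even dimension.
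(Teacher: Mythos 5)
The statement you were asked to prove is explicitly a conjecture in the paper (Conjecture \ref{conj:width1}): the authors offer no proof, only the numerical evidence of Tables \ref{sporadictable} and \ref{table: higher} and the $d=4$ classification of Theorem \ref{main theorem}, and they pose the finiteness question as open. Your submission is therefore not a proof attempt to be compared against an existing argument but a program sketch, and you say so yourself in the final paragraph: you acknowledge that converting the three constraints (thin, non-Cayley, non-free-join) into an explicit bound on $N_\Delta$ for each fixed even $d$ ``will require a genuinely new combinatorial invariant.'' That sentence correctly names the gap; there is no proof here and none in the paper to compare it with.

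A few remarks on the sketch itself. The opening reduction is sound and worth recording: in the embedding of $\Delta_1 \circ_\zz \Delta_2$ the last coordinate functional takes values only in $\{0,1\}$, so every free join (lattice pyramids included) has lattice width $1$, and hence any thin simplex of width $\geq 2$ is automatically not a free join. Likewise the translation via Proposition \ref{prop: Cayley} (non-Cayley $\iff$ the code does not split) and the covering description of thinness are correct. However, the cardinality estimate $|C_\Delta| \leq N_\Delta^{d+1}-(N_\Delta-1)^{d+1} = O(N_\Delta^d)$ adds nothing beyond the trivial $|C_\Delta|\leq N_\Delta^d$ (an extended code is a subgroup lying in a rank-$d$ sublattice), and crucially it bounds the volume of $\Delta$ in terms of $N_\Delta$ rather than bounding $N_\Delta$; the width-$1$ family \eqref{family1} shows $N_\Delta$ and the volume can grow without bound together, so the width-$\geq 2$ hypothesis must enter the counting in an essential, not-yet-identified way. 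The proposed ``signed count of flats of $\mathcal{H}_\Delta$ weighted by codimension parity'' that would break in odd $d$ is the heart of the matter, and you have not constructed it. The honest conclusion is that this remains open; your outline is a reasonable map of the territory but not a route across it.
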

\begin{conj}\label{conj:width2}
	In odd dimensions all thin simplices have width $1$.
\end{conj}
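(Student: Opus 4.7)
The plan is to argue by contradiction in the code-theoretic language. Suppose $\Delta$ is a $d$-dimensional thin simplex with $d$ odd and lattice width $\geq 2$. Encode $\Delta$ as the extended linear code $C_\Delta \subseteq \zz_{N_\Delta}^{d+1}$ generated by an $m \times (d+1)$ matrix $g_\Delta$ over $\zz_{N_\Delta}$. Thinness translates to the condition that every codeword $x g_\Delta$ has at least one zero coordinate, and by the contrapositive of Proposition \ref{prop: Cayley} the width $\geq 2$ hypothesis forbids the existence of a linear functional on the column span taking the columns of $g_\Delta$ to $\{0,1\}$ with both values attained. The aim is to deduce a contradiction, exploiting that the length $d+1$ of $C_\Delta$ is even.

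The first step would be to reduce to the case where $N_\Delta$ is a prime power: the CRT decomposition $\zz_{N_\Delta} = \prod_p \zz_{p^{v_p(N_\Delta)}}$ splits $C_\Delta$ into a product of prime-power-level codes, under which thinness behaves multiplicatively and width $\geq 2$ is inherited by at least one factor. The second step is to attempt a parity argument on the code exploiting the involution $c \mapsto -c$. Nonzero codewords with $-c = c$ (i.e.\ $2c = 0$) contribute differently from the free pairs $\{c,-c\}$, and combined with the evenness of $d+1$ one can try to enumerate zero positions weighted by a sign, à la a MacWilliams-type transform, to derive a congruence that the width $\geq 2$ hypothesis violates.

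The main obstacle, and where a genuinely new idea seems required, is finding an invariant that is truly sensitive to the parity of $d+1$. A more ambitious approach would use the Hodge-theoretic interpretation of the coefficients of $l^*(\Delta,t)$ from \cite{batyrev_mirror_1996}: these are top-weight Hodge-Deligne numbers of a nondegenerate affine hypersurface $Z_\Delta$, and Poincar\'e duality on $Z_\Delta$ exchanges pieces in degrees summing to $\dim Z_\Delta = d-1$. When $d$ is odd, $d-1$ is even, so the middle Hodge piece sits at integer bidegree $((d-1)/2, (d-1)/2)$; forcing all top-weight contributions to vanish (thinness) may then impose a sharper constraint on the Newton polytope than in even dimension, ultimately pushing $\Delta$ into a Cayley configuration and hence into width $1$.

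Even granting such an obstruction, the final step of translating a cohomological or character-theoretic vanishing into the purely combinatorial statement ``$\Delta$ has width $1$'' will likely demand a careful case analysis on the rank $m$ of $C_\Delta$ and the prime factors of $N_\Delta$, similar in structure to the $d=4$ argument of Section \ref{sect: 4d}. This suggests that, even if the conjecture holds, its proof may end up resembling a classification rather than a one-line parity argument, and the experimental vanishing in dimensions $5$ and $7$ should probably be treated as a testing ground before a general statement can be attempted.
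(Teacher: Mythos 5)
This statement is labeled a \emph{conjecture} in the paper, not a theorem: the author offers no proof and explicitly motivates it only by the experimental absence of width-$\geq 2$ thin simplices in the dimensions $5$ and $7$ searched in Table \ref{table: higher}. So there is no ``paper's own proof'' to compare against, and your text is honest that it is a proof plan rather than a proof. That said, two of the steps you sketch already have gaps worth flagging before any further work is invested.

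First, the CRT reduction does not go through as stated. Writing $C_\Delta$ under $\zz_{N} \cong \prod_p \zz_{p^{v_p(N)}}$ as a product of prime-power components $C_p$, one direction is fine: if $C_\Delta$ is thin then every $C_p$ is thin, since a maximal-weight word in some $C_p$ lifts to a maximal-weight word in $C_\Delta$. But non-splitting (equivalently, width $\geq 2$ via Proposition \ref{prop: Cayley}) is \emph{not} inherited by a prime-power factor: $C_\Delta$ splits if and only if all the $C_p$ split \emph{with a common partition} of the index set. It is entirely possible that every $C_p$ splits individually (each yielding a width-$1$ simplex over $\zz_{p^{v_p(N)}}$) while $C_\Delta$ does not, because the partitions disagree across primes. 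In that situation your reduction produces only width-$1$ thin codes at each prime, and the contradiction you were hoping for evaporates. Any serious attempt along these lines must deal with the ``all prime factors split but with incompatible partitions'' case directly, which is essentially the original problem in a different guise.

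Second, neither the proposed sign-weighted enumeration of zero positions nor the appeal to Poincar\'e duality on the hypersurface $Z_\Delta$ is developed to the point where one can see parity of $d+1$ actually entering. Poincar\'e duality is already the source of the palindromy of $l^*(\Delta,t)$, and palindromy holds in every dimension; the thinness hypothesis kills $l^*$ entirely regardless of parity, so something finer than the duality alone is needed to distinguish odd $d$. Given that the paper's evidence for the conjecture is purely computational and that Conjecture \ref{conj:width1} allows finitely many exceptions in even dimension, it would be prudent, as you yourself suggest at the end, to first test whether the conjecture even survives more extensive searches (say $d=9$, larger $N$) before committing to a general argument. As it stands, the proposal identifies plausible tools but does not close any of the gaps, and one of its announced reductions is not valid.
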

\noindent  One can  also ask whether these conjectures hold for thin polytopes, that are not simplices.   As was shown in \cite{borger_thin_2023}, all three-dimensional thin polytopes have width $1$.


In \cite{borger_thin_2023} a question was asked whether there are thin \textit{empty} simplices in dimensions $\geq 5$ that are not lattice pyramids.  Recall that a \textit{quotient group} of a lattice simplex $\Delta$ is defined as the quotient of $\mathbb{Z}^{d+1}$ by the  subgroup generated by the vertices of $\Delta \times \{1\}$.   Proposition 3.18 in \cite{borger_thin_2023} implies that a thin empty simplex that is not a lattice pyramid must have a non-cyclic quotient group. We answer the question above  positively.   All the empty simplices considered in Table \ref{table: higher} are not lattice pyramids and  have a non-cyclic quotient group. Moreover, all of them have width $1$.  Here is an example. 
\begin{ex}
 The $6$-dimensional simplex, whose vertices are the columns of
\[ \left(\begin{array}{rrrrrrr}
	0 & 1 & 0 & 0 & 1 & 3 & 1 \\
	0 & 0 & 1 & 0 & -1 & -1 & 2 \\
	0 & 0 & 0 & 1 & -3 & -6 & -1 \\
	0 & 0 & 0 & 0 & 2 & -6 & -2 \\
	0 & 0 & 0 & 0 & 0 & 10 & -2 \\
	0 & 0 & 0 & 0 & 0 & -2 & 2
\end{array}\right), \]
is thin and empty with $h^*(\Delta,t)= 3 t^4 + 12 t^3 +16 t^2 +1$. Its quotient group is $ \zz_2 \times \zz_2 \times \zz_8$.  
The corresponding linear code over $\zz_8$ can be generated by 
\[ \left(\begin{array}{rrrrrrr}
	0 & 3 & 3 & 4 & 4 & 5 & 5 \\
	4 & 0 & 0 & 0 & 4 & 4 & 4 \\
	0 & 0 & 4 & 4 & 4 & 0 & 4
\end{array}\right).\]
\end{ex}

Finally, a thin polytope $\Delta$ is called \textit{trivially thin}  if $ \deg h^*( \Delta,t) \leq d/2$. 
We see that most of the found thin simplices that are not free joins are non-trivially thin.
\subsection{Structure of the paper} In Section \ref{section:simplices} we review the basics of Ehrhart theory of simplices and explain the relationship between simplices, linear codes, and hyperplane arrangements. Using this connection, we derive certain necessary conditions for a simplex to be thin.  We continue the section by focusing on the case of spanning thin simplices. In the end of the section we discuss how to characterize lattice simplices of width 1 in terms of the corresponding linear code.  Based on the above results, in Section \ref{sect: 4d} we classify the four-dimensional thin simplices.

\subsection*{Acknowledgements} I would like to thank my supervisor Fernando Rodriguez Villegas for many useful discussions and, in particular,  for his suggestion to look at linear codes in a related problem.  I am indebted to Benjamin Nill for his interest in this work and a lot of  valuable comments. I extend my thanks to Asem Abdelraouf, Mykyta Bulakhov and Giulia Gugiatti for useful conversations. I am grateful to the anonymous reviewers for their helpful suggestions, which improved the presentation of this work. Moreover, I would like to thank the Armed Forces of Ukraine for keeping my family safe in Kharkiv.

\section{Lattice simplices and modular arithmetic}
\label{section:simplices}

\subsection{Our Setup}
For a general lattice polytope $\Delta$ the coefficients of the $h^*$-polynomial do not have a known simple combinatorial interpretation. However, if $\Delta$ is a simplex, then one can consider the following. 
 Let us denote the vertices of the simplex by $v_0, \ldots, v_d$ and let $(v_i,1)$ be the lift of $v_i$ to $M \oplus \mathbb{Z}$. Define the half-open parallelepiped associated to $\Delta$ as
\[ \Pi_\Delta =   \left\{  \sum_{i=0}^{d} a_i (v_i,1)  , \quad 0 \leq a_i <1
 \right\}.   \]  
The $h^*$-polynomial of $\Delta$ can then be expressed as a sum over the integral points inside $\Pi_\Delta$: 
 \[ h^*(\Delta,t) = \sum_{\lambda \in \Pi_\Delta \cap M \oplus \mathbb{Z}} t^{\lambda_{d+1}}. \]
In other words, the $k$th coefficient of $h^*(\Delta, t)$ equals the number of integral points in $\Pi_\Delta$ that lie at height $k$.

 Betke and McMullen \cite{betke_lattice_1985} suggested to look at  a slight modification of this. Consider the open parallelepiped 
 \[ \Pi_\Delta^\circ =   \left\{  \sum_{i=0}^{d} a_i (v_i,1)  , \quad 0 < a_i <1
 \right\}   \] 
and the polynomial
     \[ l^*(\Delta,t) = \sum_{\lambda \in \Pi_\Delta^\circ \cap M \oplus \mathbb{Z}} t^{\lambda_{d+1}}.  \]
It is called the local $h^*$-polynomial of $\Delta$. 
 Since it enumerates the number of points inside a parallelepiped, that is, a box, it is also known as \textit{box polynomial}. Let us discuss some of its properties.
 From this definition it is evident that the coefficients of $l^*(\Delta,t)$ are non-negative numbers. In particular, the linear coefficient is the number of lattice points in the interior of the simplex 
\[ l^*_1(\Delta) = \mid \Delta^\circ \cap M \mid. \] 
Therefore, if $\Delta$ is thin, then it must have no interior lattice points, i.e. it is hollow. 

Since on  $\Pi_\Delta^\circ$ there is an involution given by \[ \sum a_i (v_i,1) \rightarrow \sum (1-a_i) (v_i,1), \]  the polynomial $l^*(\Delta,t)$ is palindromic, namely $l^*(\Delta,t) = t^{d+1} l^*(\Delta, t^{-1})$. 

Note that  these properties still hold if $\Delta$ is not a simplex.

\subsection{Linear codes} We are going to relate lattice simplices to linear codes, so here are the necessary definitions and facts from  coding theory. 
Let $\mathbb{Z}_N$ be the ring of integers modulo $N$.
\begin{convention}
In this paper we always identify the elements of $\mathbb{Z}_N$ with their representatives $0,1, \ldots, N-1$.  In particular, this allows us to speak about $\gcd$ of the elements of $\zz_N$, by which we mean the usual integer $\gcd$ of the corresponding representatives. 
\end{convention}

\begin{dfn}
A \textit{linear code}  of length $n$ over $\mathbb{Z}_N$ is a submodule $C$ of the free module $\mathbb{Z}_N^n$.
\end{dfn}
A standard way to produce a linear code is to take an $m\times n$ matrix $g$ over $\mathbb{Z}_N$ and consider the submodule of $\mathbb{Z}_N^n$ generated by the rows of $g$.  The elements of a linear code are called \textit{words}. For a word $c \in C$ its \textit{weight}  is the number of non-zero entries  
\[ w(c) \coloneqq | \{ i \: : \: c_i \neq 0 \} | .\]
The generating function of weights is called \textit{weight enumerator}
\[ W_C(X)=\sum_{c \in C} X^{w(c)}.\]


We call two linear codes $C_1$ and $C_2$ of length $n$ \textit{isomorphic} if after a possible permutation of indices the sets of words coincide, that is, there exists a permutation $\sigma \in S_n$ such that 
 \[ \{ c \: : \: c  \in C_1 \} = \{ (c_{\sigma(1)}, \ldots, c_{\sigma(n)} )\: : \: c  \in C_2 \}. \]

Clearly, isomorphic linear codes have the same weight enumerator. Moreover, if a linear code is defined by a generating matrix, then any permutation of its rows and columns defines an isomorphic linear code.

\begin{dfn}
 We call a linear code $C$ \textit{extended} if for every word $c\in C$ we have $\sum c_i =0 \mod  N$.   
\end{dfn}
In this paper we use only extended linear codes. 
For an extended linear code $C$ we can  define the \textit{height} of a word $c \in C$ to be 
\[ \height(c) \coloneqq \frac{1}{N} \sum_{i=1}^n c_i. \]

\subsection{From simplices to linear codes}

Let $\Delta \subseteq M \simeq \mathbb{Z}^d$ be a $d$-dimensional lattice simplex with vertex set $v_0, \ldots, v_{d}$. Following \cite{batyrev_lattice_2013} one can associate the following additive group to a simplex:
\[ \Lambda_\Delta \coloneqq \left\{ (x_0,\ldots, x_d) \in (\mathbb{R}/\mathbb{Z})^{d+1} \: :  \quad \sum_{i=0}^d \{ x_i \} (v_i,1) \in M \oplus \mathbb{Z}  \right\}, \] 
where $\{ \cdot \}: \mathbb{R} \rightarrow [0,1)$ is the fractional part function, i.e. $ \{ x\} =  x - \lfloor x \rfloor $, where $\lfloor x \rfloor $ is the biggest integer that is smaller than or equal to $x$. 

We can consider the group $\Lambda_\Delta$  as a linear code. 
Let $N_\Delta$ be the least common multiple of the denominators of $x_i$'s of $\Lambda_\Delta$. Since the order of $\Lambda_\Delta$ is the normalized volume of the simplex, $N_\Delta$ is a divisor of $\volume_{\zz}(\Delta)$. Given an element $(x_0, \dots, x_d)$, we take the representative $( \{x_0 \}, \ldots, \{ x_d\})$ and multiply it by $N_\Delta$. In this way we can promote each element of $\Lambda_\Delta$ to a word of an extended code over $\mathbb{Z}_{N_\Delta}$. Let us denote this linear code by $C_\Delta$
\[ C_\Delta  \coloneqq  \left\{ c= (c_0, \ldots, c_d) \in (\zz_{N_\Delta})^{d+1} \: : \: \sum c_i (v_i,1)=0 \mod N_\Delta  \right\}. \]
For a word $c \in C_\Delta$ consider $ \gcd(c,N_\Delta) = \gcd( c_0, \ldots, c_{d}, N_\Delta)$. 
Since $N_\Delta$ was chosen to be minimal, we have  
 \[    \gcd \left(  \left\{  \gcd (c,N_\Delta)    \right\}_{c \in C_\Delta}     \right)  =1,    \]
i.e. the greatest common divisor of $N_\Delta$ and all the entries of all the words is $1$.

Recall that two lattice simplices $\Delta_1$ and $\Delta_2$ are \textit{isomorphic} if there is an affine unimodular transformation of the ambient lattice $M$ mapping $\Delta_1$ to $\Delta_2$.  The following theorem is crucial since it allows us to talk interchangeably about simplices and linear codes. 
\begin{thm}[\cite{batyrev_lattice_2013}, Theorem 2.3.] \label{thm:BH}
Up to the corresponding isomorphisms there is a one-to-one correspondence between $d$-dimensional lattice simplices and extended linear codes $C$ of length $d+1$ over $\zz_N$ for $N \geq 1$ such that the greatest common divisor of all the entries of all the words in $C$ and $N$ is $1$. 
\end{thm}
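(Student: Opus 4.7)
The plan is to exhibit an inverse construction explicitly and to check that it matches the construction $\Delta \mapsto C_\Delta$ up to isomorphism. I would first reinterpret the forward direction geometrically. Given $\Delta$ with vertices $v_0, \ldots, v_d$, the $\mathbb{Q}$-linear map
\[
\phi : \mathbb{Q}^{d+1} \longrightarrow (M \oplus \mathbb{Z}) \otimes \mathbb{Q}, \qquad e_i \mapsto (v_i, 1),
\]
is an isomorphism since the $(v_i,1)$ are $\mathbb{Q}$-linearly independent. Let $L_\Delta := \phi^{-1}(M \oplus \zz) \subseteq \mathbb{Q}^{d+1}$. This is a lattice containing $\zz^{d+1}$ with index $\volume_\zz(\Delta)$, and the condition that each $(v_i,1)$ has last coordinate $1$ is equivalent to every element of $L_\Delta$ having an integer coordinate sum. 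The quotient $L_\Delta / \zz^{d+1}$ is exactly the group $\Lambda_\Delta$, and after rescaling by $N_\Delta = \exp(L_\Delta / \zz^{d+1})$ one recovers the extended code $C_\Delta \subseteq \zz_{N_\Delta}^{d+1}$; the minimality of $N_\Delta$ translates into the gcd condition.

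For the backward direction, given an extended code $C \subseteq \zz_N^{d+1}$ satisfying the gcd condition, I would lift $C$ to $\widetilde{C} \subseteq \zz^{d+1}$ and form the overlattice $L := \zz^{d+1} + \tfrac{1}{N}\widetilde{C}$. The extendedness of $C$ ensures that the linear form $\ell(x) = \sum x_i$ takes integer values on $L$, so one may set $M := L \cap \ker\ell$ (a rank-$d$ sublattice) and define the vertices of $\Delta_C$ as the images of $e_0, \ldots, e_d$ in the affine lattice $L \cap \ell^{-1}(1) = e_0 + M$, after fixing any unimodular identification $M \cong \zz^d$. The gcd condition forces $N = \exp(L/\zz^{d+1})$, so $N$ agrees with $N_{\Delta_C}$.

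The two round trips are then essentially formal. Applying the forward construction to $\Delta_C$ produces the same overlattice $L$ (by the definition of $\phi$ applied to the vertices $v_i$), hence $C_{\Delta_C} = C$. Conversely, applying the backward construction to $C_\Delta$ reproduces $L_\Delta$, and the resulting simplex coincides with $\Delta$ up to the chosen unimodular identification $M \cong \zz^d$. The main point I would need to verify carefully is that the two notions of isomorphism match exactly: unimodular-affine equivalence of simplices with labelled vertices corresponds on the code side to permutations of the $d+1$ coordinate positions, arising from relabelling the $v_i$'s. The hidden choice of a $\zz$-basis of $M$ affects only the ambient embedding and has no counterpart on the code side, since $L \subseteq \mathbb{Q}^{d+1}$ is basis-free; conversely, a permutation of coordinates of $C$ corresponds to relabelling the vertices of $\Delta$, and no other $\zz_N$-module automorphism of $\zz_N^{d+1}$ is admissible because it would alter the distinguished basis $e_0, \ldots, e_d$. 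This rigidity — ruling out spurious symmetries on either side — is the subtlest point of the proof.
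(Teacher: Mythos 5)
The paper does not actually prove this theorem; it quotes it as Theorem~2.3 of \cite{batyrev_lattice_2013} and only records the two explicit constructions (simplex $\mapsto$ code and code $\mapsto$ simplex) in the surrounding text, so there is no in-paper argument to compare against. Your outline supplies a proof that the paper defers to the reference, and it is sound: passing to the overlattice $L_\Delta = \phi^{-1}(M\oplus\zz) \supseteq \zz^{d+1}$ on the forward side and to $L = \zz^{d+1} + \tfrac{1}{N}\widetilde C$ on the backward side is the lattice-theoretic reformulation of the paper's description --- the paper's $M := \pi^{-1}(C)$ and affine slice $\{\sum x_i = N\}$ is precisely your picture rescaled by $N$ --- and your observation that the gcd condition is equivalent to $N = \exp(L/\zz^{d+1})$ is the right way to recover $N = N_{\Delta_C}$ and make the round trips close up. One place to tighten is the isomorphism bookkeeping: the implication ``simplex isomorphism gives a coordinate permutation of codes'' should be argued by lifting an affine unimodular $\psi$ with $\psi(\Delta_1)=\Delta_2$ to a linear unimodular $\widetilde\psi$ of $M\oplus\zz$ preserving the height coordinate, observing that $\widetilde\psi$ permutes the lifted vertices $(v_i,1)$, and concluding $L_{\Delta_1} = P_\sigma^{-1}L_{\Delta_2}$ for the induced permutation $\sigma$; the converse is restriction of $P_\sigma$ to $\ker\ell$. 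Your final remark about ``ruling out spurious symmetries'' is somewhat misleading, since the paper simply \emph{defines} code isomorphism to be coordinate permutation --- the content is only the two-way equivalence just sketched, not a rigidity claim about which $\zz_N$-module automorphisms are admissible.
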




So far, we only described how from a given simplex one constructs the corresponding linear code. Let us also describe the inverse construction. 
Let $C$ be an extended code over $\mathbb{Z}_N$ of length $d+1$. Consider the natural projection map $\pi: \mathbb{Z}^{d+1} \rightarrow \zz_N^{d+1}$.   The preimage $M\coloneqq\pi^{-1}(C)$ is a sublattice of $\zz^{d+1}$. 
 Let $ e_0, \ldots, e_d$ be the standard basis of $\zz^{d+1}$. Define the simplex
  \[ \Delta_C=\conv( N e_0, \ldots , N e_d) \] with respect to the affine lattice $ \text{aff}(N e_0, \ldots, N e_d) \cap M$.  This gives us a map $C \rightarrow \Delta_C$.

\begin{ex} \label{2d2example}
Consider twice the standard two-dimensional simplex $\Delta= 2 \Delta_2$. As was already mentioned in the introduction, this is the only interesting thin simplex in dimension two. The coordinates of its vertices are $(0,0), \: (2,0), \: (0,2)$. The corresponding group $\Lambda_{\Delta}$ has $\volume_{\zz} (\Delta) = 4$ elements, namely 
 \[ \left\{\left(0,0,0\right), \left( \frac{1}{2}, \frac{1}{2},0 \right), \left( \frac{1}{2}, 0, \frac{1}{2} \right), \left(0, \frac{1}{2}, \frac{1}{2}\right) \right\}. \]
 We see that $N_\Delta=2$ and the corresponding linear code $C_{\Delta}$   is a linear code over $\zz_2$ with 4 words 
  \[ \left\{\left(0,0,0\right), \left( 1, 1,0 \right), \left( 1, 0, 1\right), \left(0, 1, 1\right) \right\}. \]
This linear code  can be generated by the following matrix 
\[ g_{ \Delta} = \begin{pmatrix}
    1 & 1 & 0 \\
    0 & 1 & 1 
\end{pmatrix}. \]
Let us now  describe how to go back to the simplex from this linear code. The lattice $M$ is generated by $f_0=(1,1,0), f_1 = (0,1,1)$ and $f_2=(1,0,1)$.
The affine space $\text{aff}(2 e_0, 2 e_1, 2 e_2)$ is the two-dimensional affine space given by $\{ x \in \mathbb{R}^3 \: : \: x_0+x_1+x_2 =2 \}$. Thus, $ \text{aff}(2 e_0, 2 e_1, 2 e_2) \cap M$ is the affine sublattice of $M$  given by 
\[ \left\{ a f_0 + b f_1 + c f_2 \: : \: (a,b,c) \in \zz^3, a+b+c=1 \right\} \subseteq M.  \] 
The vertices of the simplex are given by $(2,0,0), (0,2,0), (0,0,2)$ in $M$, and thus in $ \text{aff}(2 e_0, 2 e_1, 2 e_2) \cap M$ they are given by $(1,-1,1), (1,1,-1),(-1,1,1)$.  
After rotating everything with the matrix $((1,1,0),(0,1,1),(1,1,1))$ we can project to the first two coordinates and obtain the simplex $ 2 \Delta_2$ in its usual  form.
\end{ex}

Note that 
the height of a word $c\in C_\Delta$  corresponds to the height of the vector $\sum_{i=0}^{d} \frac{c_i}{N_\Delta }  (v_i,1)$ in the half-open parallelepiped of $\Delta$. 
Therefore, we can calculate the $h^*$ -polynomial of a simplex $\Delta$ via the corresponding linear code 
 \[ h^*(\Delta,t) = \sum_{c \in C_\Delta} t^{\height(c)}. \]
 Moreover, the words that have no zeros correspond exactly to the points of $\Pi^\circ_\Delta$, giving us an expression for the local $h^*$-polynomial
 \[   l^*(\Delta,t)= \sum_{\substack{c \in C_\Delta \\ w(c)={d+1}}} t^{\height(c)}.\]
From this we see that the thinness of the simplex $\Delta$ translates into a nice property of the corresponding linear code. The sum defining $l^*(\Delta, t)$ is empty if there are no words of weight $d+1$ in $C_\Delta$. This motivates the following definition.  
\begin{dfn}
    An extended linear code $C$ is called \textit{thin} if it contains no words of maximal weight. 
\end{dfn}
\begin{cor}
\label{prop:thincode}
    A lattice simplex $\Delta$ is thin if and only if the corresponding linear code $C_\Delta$ is thin. 
\end{cor}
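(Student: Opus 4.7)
The plan is essentially to unpack the two formulations and observe that they reduce to the same emptiness condition. The bulk of the work has already been done in the paragraph preceding the corollary, where the identity
\[ l^*(\Delta,t) = \sum_{\substack{c \in C_\Delta \\ w(c)=d+1}} t^{\height(c)} \]
is derived. Given this, I would argue that since $l^*(\Delta,t)$ has non-negative integer coefficients, the condition $l^*(\Delta,t) = 0$ is equivalent to the indexing set on the right-hand side being empty, i.e.\ to $C_\Delta$ containing no codeword of weight $d+1$, which is exactly the definition of $C_\Delta$ being thin.

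To make the argument self-contained I would briefly re-verify the bijection underlying the displayed identity. A point in $\Pi_\Delta^\circ \cap (M \oplus \mathbb{Z})$ has the form $\sum_{i=0}^{d} a_i (v_i,1)$ with each $a_i \in (0,1)$; the tuple $(a_0,\ldots,a_d)$ then defines an element of $\Lambda_\Delta$ whose every coordinate is nonzero in $\mathbb{R}/\mathbb{Z}$, and multiplication by $N_\Delta$ produces a codeword $c \in C_\Delta$ with $c_i \neq 0$ for all $i$, i.e.\ $w(c) = d+1$. Conversely, a codeword $c$ with $w(c) = d+1$ yields $a_i = c_i/N_\Delta \in (0,1)$ with $\sum a_i(v_i,1) \in M \oplus \mathbb{Z}$, so the point lies in $\Pi_\Delta^\circ$. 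Under this bijection the last coordinate $\sum a_i = \frac{1}{N_\Delta}\sum c_i$ of the point is exactly $\height(c)$, which is what makes the two generating functions match.

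There is no real obstacle: the corollary is a formal consequence of the dictionary between simplices and extended linear codes set up via Theorem~\ref{thm:BH}, together with the height-versus-last-coordinate matching. The only point worth being careful about is that $l^*(\Delta,t)$ vanishes as a polynomial if and only if the set of interior lattice points of $\Pi_\Delta^\circ$ at every height is empty, which uses non-negativity of the coefficients; this rules out any cancellation between terms at different heights that could have produced a nonempty index set with $l^*(\Delta,t) = 0$.
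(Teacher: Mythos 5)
Your proof is correct and follows the same route as the paper: it relies on the identity $l^*(\Delta,t)=\sum_{c\in C_\Delta,\, w(c)=d+1} t^{\height(c)}$, together with the observation that the right-hand side is a sum of monomials with non-negative integer coefficients and hence vanishes if and only if the index set is empty. Your brief re-verification of the bijection between full-weight codewords and interior lattice points of $\Pi_\Delta^\circ$ is the same content the paper establishes just before stating the corollary.
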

\noindent Note, that the linear code considered in the Example \ref{2d2example} above is thin. 

It is easy to see \cite{batyrev_lattice_2013} that the lattice simplex $\Delta$ is a lattice pyramid if and only if the corresponding linear code is \textit{degenerate}. This means that there exists an index $i \in \{0,\ldots,d\}$ such that every word $c \in C_\Delta$ has $c_i=0$. In other words, all the generating matrices of $C_\Delta$ have a column of zeros.

We also have to translate the construction of free joins to the language of linear codes.  Consider first the following definition. 
\begin{dfn}
    Suppose $C_1$ and $C_2$ are linear codes over $\zz_{N_1}$ and $\zz_{N_2}$ correspondingly. A \textit{direct sum} of $C_1$ and $C_2$ is the linear code $C_1 \oplus C_2$  over $\zz_{\lcm(N_1,N_2)}$ whose words are given by 
    \[ \left\{  \frac{\lcm(N_1,N_2)}{N_1} \cdot  c_1 \: \huge{|} \: \frac{\lcm(N_1,N_2)}{N_2} \cdot  c_2 \: : \: c_1 \in C_1, \: c_2 \in C_2   \right\},  \] 
where $ |$ is the usual  concatenation. 
This construction corresponds exactly to the free joins on the simplices side. 
\end{dfn}
\begin{lem}
    A simplex $\Delta$ is a free join of the simplices $\Delta_1$ and $\Delta_2$ if and only if the corresponding linear code $C_\Delta$ is a direct sum of the linear codes $C_{\Delta_1}$ and $C_{\Delta_2}$. 
\end{lem}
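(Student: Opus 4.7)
The plan is to reduce to the forward direction by invoking Theorem \ref{thm:BH}, which makes the correspondence $\Delta \leftrightarrow C_\Delta$ a bijection on isomorphism classes: once we verify that a free join on the simplex side maps to a direct sum on the code side, the converse is automatic. The forward calculation itself proceeds by unwinding the definition of $\Lambda_\Delta$ for $\Delta = \Delta_1 \circ_\zz \Delta_2$ and observing that the defining conditions split neatly into the defining conditions for $\Lambda_{\Delta_1}$ and $\Lambda_{\Delta_2}$ separately.

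Concretely, the lifted vertices of $\Delta$ live in $M_1 \oplus M_2 \oplus \zz \oplus \zz$ and are given by $(v^{(1)}_i, 0, 0, 1)$ and $(0, v^{(2)}_j, 1, 1)$. Writing a tuple in $(\mathbb{R}/\mathbb{Z})^{d_1+d_2+2}$ as $(x^{(1)}, x^{(2)})$ split along the two factors, the membership condition of $\Lambda_\Delta$ decomposes into four congruences: the two $M$-components $\sum_i \{x^{(1)}_i\}v^{(1)}_i$ and $\sum_j \{x^{(2)}_j\}v^{(2)}_j$ must lie in $M_1$ and $M_2$ respectively, and $\sum_j \{x^{(2)}_j\} \in \zz$ together with $\sum_i \{x^{(1)}_i\} + \sum_j \{x^{(2)}_j\} \in \zz$. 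The first and third conditions are exactly $x^{(1)} \in \Lambda_{\Delta_1}$, while the second and fourth give $x^{(2)} \in \Lambda_{\Delta_2}$, so there is a canonical isomorphism $\Lambda_\Delta \cong \Lambda_{\Delta_1} \oplus \Lambda_{\Delta_2}$.

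To pass from $\Lambda$-groups to codes, observe that the lcm of denominators in a direct sum of finite subgroups of $\mathbb{R}/\mathbb{Z}$ is the lcm of the individual lcms, so $N_\Delta = \lcm(N_{\Delta_1}, N_{\Delta_2})$. Multiplying representatives in $[0,1)$ by $N_\Delta$ sends $(x^{(1)}, x^{(2)})$ to $\bigl( \tfrac{N_\Delta}{N_{\Delta_1}} c^{(1)} \mid \tfrac{N_\Delta}{N_{\Delta_2}} c^{(2)} \bigr)$ with $c^{(i)} \in C_{\Delta_i}$, which is exactly the formula defining $C_{\Delta_1} \oplus C_{\Delta_2}$. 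The main subtlety is confirming that no denominators collapse when combining the two $\Lambda$-factors, so that $N_\Delta$ really equals the lcm rather than a proper divisor; this is immediate from the direct-sum description together with the minimality of $N_{\Delta_i}$. The reverse implication then follows at once from Theorem \ref{thm:BH}: if $C_\Delta = C_{\Delta_1} \oplus C_{\Delta_2}$, the forward calculation applied to the free join $\Delta_1 \circ_\zz \Delta_2$ produces a simplex with the same associated code, and uniqueness forces $\Delta \cong \Delta_1 \circ_\zz \Delta_2$.
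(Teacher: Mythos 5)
Your proof takes essentially the same approach as the paper: verify the forward implication (free join $\Rightarrow$ direct sum) by direct computation with $\Lambda_\Delta$, then invoke the one-to-one correspondence of Theorem \ref{thm:BH} for the converse; you have simply spelled out the step the paper declares ``clear from the construction.'' One harmless bookkeeping slip: the four congruences pair as $\{2,3\}\Rightarrow x^{(2)}\in\Lambda_{\Delta_2}$ and $\{1,\ 4\text{ minus }3\}\Rightarrow x^{(1)}\in\Lambda_{\Delta_1}$, not quite the grouping you wrote, but the resulting isomorphism $\Lambda_\Delta\cong\Lambda_{\Delta_1}\oplus\Lambda_{\Delta_2}$ and the rest of the argument are correct.
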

\begin{proof}
It is clear from the construction that if a simplex is a free join, then the corresponding linear code is a direct sum. Suppose we have a linear code $C$ over $\zz_N$ which is a direct sum $C = C_1 \oplus C_2$.  Let $C_1=C_{\Delta_1}$ and $C_2=C_{\Delta_2}$.
We know that the simplex $\Delta_1 \circ_{\zz} \Delta_2$ corresponds to $C_{\Delta_1} \oplus C_{\Delta_2}$. By the fact that the correspondence in Theorem \ref{thm:BH} is one-to-one we can conclude that $\Delta_C \simeq \Delta_1 \circ_{\zz} \Delta_2$.

\end{proof}

\begin{lem}
	\label{FJ_criterion}
	
	If a lattice simplex $\Delta$ is a free join, then both the $h^*$-polynomial and the weight enumerator of the corresponding linear code factorize.  However, the opposite is not true. 
\end{lem}

\begin{proof}
	The factorization of the $h^*$-polynomial for free joins was already discussed in the introduction.   Let $C_\Delta = C_1 \oplus C_2$ be the corresponding linear code. Since the words of $C_\Delta$ are concatenations (with factors) of all the possible pairs of words $(c_1,c_2) \in C_1 \times C_2$,  the weight of a word in $C$ is exactly the sum of the weights of the corresponding words $c_1$ and $c_2$.

 The failure of the opposite implication can be shown by the following example.  Consider the code $C$ of length $7$ over $\zz_4$ generated by 
 \[ \left(\begin{array}{rrrrrrr}
 	0 & 1 & 2 & 2 & 2 & 2 & 3 \\
 	2 & 3 & 0 & 0 & 0 & 2 & 1
 \end{array}\right). \] 
 We have 
 \[ W_C(X)= (X^2+1) ( 3 X^4+1), \quad h^*(\Delta_C,t)=(t+1) (3 t+1). \]
 This code is non-degenerate, so if it is a direct sum it must have two non-degenerate factors. In particular, one of the factors must correspond to a simplex that is not a lattice pyramid and has $h^*(\Delta,t)=t+1$. Using the classification of degree 1 lattice polytopes in \cite{batyrev_multiples_2007} we can deduce that this factor must correspond to the $1$-dimensional simplex $[0,2]$. Therefore, the second factor must correspond to a $5$-dimensional simplex with $h^*(\Delta,t)= 3 t+1$ that is not a lattice pyramid. Using the same classification, one can deduce that this is not possible. 
\end{proof}

The above lemma  gives a useful sufficient condition for deciding if a simplex $\Delta$ is a not a free join. Namely, if  at least one of the polynomials $W_{C_\Delta}(X)$ and $h^*(\Delta_C)$ does not factorize into polynomials with non-negative coefficients, then $\Delta$ is not a free join.

\subsection{From simplices to hyperplane arrangements} 
Let $g$ be an $m \times n$ generating matrix of a linear code $C$ over $\mathbb{Z}_N$. Let $g_{i}$ be the $i$th column of $g$. Define 
    \[ H_i = \left\{ x \in \mathbb{Z}_N^m \: \mid \: g_{i} \cdot x =0 \mod N  \right\}.  \]
We call this set the \textit{hyperplane} defined by $g_{i}$. This way from a linear code we get a hyperplane arrangement over $\zz_N$ \[ \mathcal{H}_C= \{ H_0, \ldots, H_{d} \}. \] 
If the linear code is the code $C_\Delta$ coming from a lattice simplex, let us denote this hyperplane arrangement as $\mathcal{H}_\Delta$.  The following proposition is going to be the central tool for classifying the four-dimensional thin simplices. 
\begin{cor}
    A simplex $\Delta$ is thin if and only if the complement of $\mathcal{H}_\Delta$ is empty, i.e. $\zz_N^m = H_0 \cup H_1 \cup \ldots \cup  H_d$. 
\end{cor}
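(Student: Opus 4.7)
The plan is to deduce the statement as a short dictionary translation, chaining the thinness criterion in Corollary \ref{prop:thincode} with the definition of the hyperplanes $H_i$. By that corollary, $\Delta$ is thin if and only if $C_\Delta$ contains no word of maximal weight $d+1$, i.e.\ every word $c \in C_\Delta$ has at least one zero coordinate. So the task reduces to rephrasing "every word has a zero" in terms of the hyperplane arrangement $\mathcal{H}_\Delta$.

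The key step is to parametrize words of $C_\Delta$ by row vectors $x \in \mathbb{Z}_N^m$ via the map $x \mapsto xg$, where $g = g_\Delta$; this map is surjective onto $C_\Delta$ since by definition $C_\Delta$ is generated by the rows of $g$. The $i$th coordinate of the resulting word is $(xg)_i = x \cdot g_i$, so $(xg)_i = 0 \bmod N$ precisely when $x \in H_i$. Therefore a word $c = xg$ has some zero entry iff $x$ belongs to $H_0 \cup \cdots \cup H_d$, and equivalently $c$ has maximal weight $d+1$ iff $x$ lies in the complement $\mathbb{Z}_N^m \setminus \bigcup_i H_i$ of $\mathcal{H}_\Delta$.

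Putting these equivalences together gives exactly the claim: $\Delta$ is thin iff there is no $x \in \mathbb{Z}_N^m$ whose image $xg$ has all nonzero entries, iff no $x$ lies outside every $H_i$, iff $\mathbb{Z}_N^m = H_0 \cup H_1 \cup \cdots \cup H_d$. There is no genuine obstacle; the only point worth flagging, and it is minor, is that $x \mapsto xg$ need not be injective over $\mathbb{Z}_N$, but surjectivity alone suffices to transport the quantifier "every word" to "every $x \in \mathbb{Z}_N^m$" and close the argument.
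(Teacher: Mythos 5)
Your proof is correct and follows essentially the same route as the paper: parametrize words of $C_\Delta$ by $x \in \mathbb{Z}_N^m$ via $x \mapsto x g_\Delta$, observe that $(xg_\Delta)_i = 0 \bmod N$ precisely when $x \in H_i$, and translate "every word has a zero entry" into "every $x$ lies in some $H_i$." Your explicit remark that surjectivity (rather than bijectivity) of $x \mapsto x g_\Delta$ suffices is a small clarification the paper leaves implicit, but the argument is the same.
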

\begin{proof}
Suppose $g_\Delta$ is a generating matrix of $C_\Delta$ and it has $m$ rows. Then the words of $C_\Delta$ can be obtained by multiplying $g_\Delta$ from the left by all the possible $a=(a_1, a_2,\ldots, a_m) \in \zz_N^m$.  Now, clearly, if every $a$ belongs to $\mathcal{H}_\Delta$, then every word has a zero at some place. Furthermore, if a word has a zero at the $i$th place, it means that the corresponding points $a$ belong to the hyperplane $H_i$. Thus, if every word has a zero somewhere, then all the points $a$ belong to some hyperplane. 
\end{proof}

For a hyperplane $H_i$ defined by the column $g_{i}$ of the $m \times (d+1)$ generating matrix $g_\Delta$  define 
\be \label{gcd} \gcdvar_i \coloneqq \gcd(g_{1i}, \ldots, g_{mi}, N). \ee 
Then the hyperplane $H_i$ contains  $ \gcdvar_i \cdot N^{m-1}$ points. This can be deduced as follows. Let $G_i : \zz^m_N \rightarrow \zz_N$ be the $\zz$-module homomorphism,  defined by the column $g_i$, that sends $x$ to $g_i \cdot x$.   The image of this map is exactly the subgroup of $\zz_N$ generated by $\gcdvar_i$, in particular $| \image G_i| = N/\gcdvar_i$. By the first isomorphism theorem we have $ | H_i  | = |  \ker G_i  | =  N^m/ | \image G_i| = \gcdvar_i  \cdot N^{m-1}$.

We give a necessary condition for the code $C$ to be thin. 
\begin{prop} \label{prop:gcdsum}
Let $C$ be a  linear code of length $d+1$ over $\zz_N$ and $g$ be its generating matrix. 
If the linear code $C$ is thin, then
    \[ \sum_{i=0}^d \gcdvar_i >N.  \] 
\end{prop}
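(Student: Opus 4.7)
My plan is to translate the thinness hypothesis into the covering statement from the previous corollary and then apply a union bound, exploiting the common point $0 \in \zz_N^m$ to upgrade the resulting weak inequality to a strict one.

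First, I would invoke the corollary: $C$ being thin is equivalent to the hyperplane arrangement $\mathcal{H}_C = \{H_0, \dots, H_d\}$ covering the whole ambient space, i.e.\ $\zz_N^m = H_0 \cup H_1 \cup \cdots \cup H_d$. Next, I would recall the formula
\[
|H_i| = \gcdvar_i \cdot N^{m-1},
\]
which follows from the fact that the solution set of a single linear equation $g_{\cdot i} \cdot x \equiv 0 \pmod N$ over $\zz_N^m$ has order $\gcdvar_i \cdot N^{m-1}$ (as stated in the paragraph preceding the proposition). Combining these with the naive union bound
\[
N^m = |\zz_N^m| = \Bigl| \bigcup_{i=0}^d H_i \Bigr| \leq \sum_{i=0}^d |H_i| = N^{m-1} \sum_{i=0}^d \gcdvar_i
\]
and dividing by $N^{m-1}$ already yields the non-strict bound $\sum_{i=0}^d \gcdvar_i \geq N$.

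To sharpen this to a strict inequality, the key observation is that every hyperplane $H_i$ contains the origin $0 \in \zz_N^m$, since $g_{\cdot i} \cdot 0 \equiv 0$. Thus the point $0$ is counted $d+1$ times on the right-hand side of the union bound but only once on the left, producing a defect of at least $d$ (assuming $d \geq 1$, which we may assume since a length-$1$ simplex cannot be thin in any interesting sense). Hence the refined bound reads
\[
N^m \leq N^{m-1} \sum_{i=0}^d \gcdvar_i - d,
\]
which gives $\sum_{i=0}^d \gcdvar_i \geq N + d/N^{m-1} > N$. Since the $\gcdvar_i$ are integers, strict inequality follows.

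The main (minor) obstacle is simply ensuring the strictness rather than the non-strict version: without the observation that all the hyperplanes share the origin, one only obtains $\sum \gcdvar_i \geq N$, which would be insufficient for ruling out boundary cases in the later classification. Aside from that, the argument is a direct translation of thinness into a covering condition together with a single-point inclusion-exclusion correction.
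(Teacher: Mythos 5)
Your proof is correct and follows essentially the same route as the paper: translate thinness into the covering condition $\zz_N^m = \bigcup_i H_i$, apply the union bound with $|H_i| = \gcdvar_i N^{m-1}$, and use the shared origin $0 \in \bigcap_i H_i$ to force strictness. The paper states the strict inequality directly from the shared-origin observation; your quantitative refinement $N^m \leq N^{m-1}\sum \gcdvar_i - d$ is a slightly more explicit way of saying the same thing.
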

\begin{proof}
Suppose $g$ has $m$ rows.  Let$ \{H_0, \ldots, H_d\}$ be the corresponding hyperplane arrangement. 
All the hyperplanes contain at most  $ \sum_{i=0}^{d}   \gcdvar_i   \cdot N^{m-1}$ points. Since the point $(0, \ldots, 0) \in \zz^m_N$ belongs to all of them, we arrive at the strict inequality 
\[ \sum_{i=0}^{d}   \gcdvar_i   \cdot N^{m-1} > N^m. \]
\end{proof}
In principle, one can continue with the inclusion-exclusion procedure to account for the intersections of subsets of the hyperplanes. But the expressions for the number of points of these intersections become more difficult and   it does not seem practical to proceed this way. 
\begin{cor} \label{cor:dimN}
    If $C$ is thin and every $\gcdvar_i=1$ (in particular, if $N$ is prime), then  $d \geq N$.
\end{cor}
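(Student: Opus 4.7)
The plan is to deduce this as a direct arithmetic consequence of Proposition \ref{prop:gcdsum}. Assuming $C$ is thin and $\gcdvar_i = 1$ for every $i \in \{0,\ldots,d\}$, the sum in that proposition collapses to $\sum_{i=0}^d \gcdvar_i = d+1$, and the strict inequality $d+1 > N$ immediately gives $d \geq N$, since both sides are integers. So the substantive corollary is really just a one-line specialization; no further inclusion–exclusion is needed.

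The only thing requiring a brief comment is the parenthetical claim that the hypothesis is automatic when $N$ is prime. I would note that for prime $N$, the definition \eqref{gcd} forces $\gcdvar_i \in \{1, N\}$, with $\gcdvar_i = N$ iff the $i$-th column of the generating matrix $g$ is identically zero modulo $N$. But a column of zeros means the code is degenerate, which by the discussion preceding the corollary corresponds to a lattice pyramid; for a thin simplex which is not a lattice pyramid (the relevant case for the classification), all $\gcdvar_i$ must equal $1$ when $N$ is prime. If one wishes to be fully rigorous about the statement as written, one observes that if some $\gcdvar_i = N$ then the column is zero, the code is degenerate, and in particular one can pass to a code over $\zz_N$ of length $d$ that is still thin, so the hypothesis "$\gcdvar_i = 1$ for all $i$" is the essential content.

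There is no serious obstacle here: once Proposition \ref{prop:gcdsum} is available, the corollary is immediate. The only potential pitfall is being careful about the strictness of the inequality coming from the fact that $0^m$ is counted in every hyperplane, which is exactly what promotes $d+1 \geq N+1$ rather than $d+1 \geq N$, and hence gives $d \geq N$ rather than $d \geq N-1$.
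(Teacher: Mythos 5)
Your proof is correct and is exactly the one-line specialization of Proposition \ref{prop:gcdsum} that the paper intends (the corollary appears without a separate proof precisely because of this). Your remark on the parenthetical is also apt: for prime $N$, $\gcdvar_i\in\{1,N\}$, and $\gcdvar_i=N$ forces a zero column, i.e.\ a degenerate code, so the hypothesis is the only interesting case.
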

\begin{cor} \label{cor : smallest prime}
    Let $p$ be the smallest prime in the factorization of $N$. If $C$ is thin, then $d \geq p$. 
\end{cor}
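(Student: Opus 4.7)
The plan is to apply Proposition \ref{prop:gcdsum} directly and then exploit the structure of divisors of $N$. The key observation is that every divisor of $N$ is either equal to $N$ or at most $N/p$: if $k \mid N$ is a proper divisor, then $N/k$ is a nontrivial divisor of $N$ and hence is at least $p$, which forces $k \leq N/p$.

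First, I would invoke Proposition \ref{prop:gcdsum} to obtain the inequality $\sum_{i=0}^d \gcdvar_i > N$. Second, each $\gcdvar_i$ is a divisor of $N$, so working under the implicit nondegeneracy assumption that no column of $g$ is zero (equivalently, no $\gcdvar_i$ equals $N$), the divisor observation yields $\gcdvar_i \leq N/p$ for every $i$. Combining these gives
\[
(d+1) \cdot \frac{N}{p} \;\geq\; \sum_{i=0}^d \gcdvar_i \;>\; N,
\]
which simplifies to $d + 1 > p$ and hence $d \geq p$, as desired.

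The only subtle point is the possibility that some $\gcdvar_i$ equals $N$. This corresponds to a zero column of the generating matrix, making the simplex a lattice pyramid, which is thin automatically and places no constraint on the dimension. In that degenerate case the bound $d \geq p$ can fail in general, so the corollary is implicitly a statement about nondegenerate (non-pyramidal) codes --- which is precisely the setting of interest for the paper's classification goal. Modulo this caveat, no step is a real obstacle: the entire argument is a one-line consequence of the inequality from Proposition \ref{prop:gcdsum} and the elementary fact that the second-largest divisor of $N$ is $N/p$.
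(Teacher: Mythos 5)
Your proof is correct and cleaner than the paper's. The paper splits into two cases (whether $N$ is a prime power or has at least two distinct prime factors), and in each case invokes the normalization $\gcd(g,N)=1$ to argue that at least one column must have a strictly smaller $\gcdvar_i$, yielding a slightly tighter worst-case estimate. You bypass the case split entirely: once you note that every proper divisor of $N$ is at most $N/p$, the uniform bound $\gcdvar_i \le N/p$ combined with Proposition \ref{prop:gcdsum} gives $(d+1)N/p > N$, hence $d+1 > p$, directly. Your remark that the corollary implicitly excludes zero columns is also correct and worth making explicit: the paper's proof tacitly relies on the same exclusion (the phrase ``at worst $d$ columns with $\gcdvar_i = p^{k-1}$'' already presupposes no column has $\gcdvar_i = p^k = N$), and a degenerate example such as the code generated by $(0,1,N-1)$ over $\zz_N$ for prime $N$ shows the stated bound genuinely fails without that caveat. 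One marginal advantage of your route is that it uses only non-degeneracy, not the gcd normalization $\gcd(g,N)=1$, though in the paper's setting the latter is a standing assumption.
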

\begin{proof}
Let $N = p_1^{m_1} \cdot \ldots \cdot p_l^{m_l}$ be the prime factorization of $N$ with $p_1 < \ldots < p_l$. Since each $\gcdvar_i$ is smaller than $N$, there exists a possibly non-unique  $j_i$, such that  $\gcdvar_i \leq N/ p_{j_i}$.  Thus, there exist integers $\a_j \geq 0$ such that
\[ N < \sum_{i=0}^d \gcdvar_i  < \sum_{j=1}^l  \a_j   \frac{N}{p_j}, \]
and $\sum \a_j = d+1$.  Dividing everything by $N$ and using that $p_1$ is the smallest prime in the factorization we obtain 
\[ 1 < \frac{d+1}{p_1}.\]

\end{proof}

\subsection{Spanning thin simplices} \label{subsec:spanning} A lattice polytope $\Delta$ is called \textit{spanning } if every point in $M \oplus \zz$ is an integer linear combination of the lattice points in $\Delta \times \{1\}$.   Spanning lattice simplices have a nice characterization in terms of  linear codes. 
\begin{prop}
    A lattice simplex $\Delta$ is spanning if and only if the corresponding linear code $C_\Delta$ can be generated by a matrix $g$ whose rows have height one.
\end{prop}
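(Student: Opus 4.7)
The plan is to translate the spanning condition into a statement about a natural quotient that is canonically identified with $C_\Delta$.

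Set $L \coloneqq \spanv_\zz\{(v_i,1) : 0 \le i \le d\} \subseteq M \oplus \zz$. The definition of $\Lambda_\Delta$ in the paper is essentially that of the quotient $(M \oplus \zz)/L$: given $(v, h) \in M \oplus \zz$, write $(v,h) = \sum a_i (v_i, 1)$ uniquely with $a_i \in \mathbb{R}$, and send $(v,h)$ to $(\{a_0\}, \ldots, \{a_d\}) \in \Lambda_\Delta$; the kernel is exactly $L$ and the image is $\Lambda_\Delta$. After multiplication by $N_\Delta$ this identifies $(M \oplus \zz)/L$ with $C_\Delta$.

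Next I would identify the images of the lattice points $(v,1)$ with $v \in \Delta \cap M$. Such a point is $(v,1) = \sum a_i (v_i,1)$ with $a_i \in [0,1]$ and $\sum a_i = 1$. If $v = v_j$ is a vertex, then $a_j = 1$ and the other $a_i = 0$, so $(v_j,1) \in L$ maps to the zero word. If $v$ is not a vertex, then all $a_i \in [0,1)$ and $\sum a_i = 1$, so $(v,1)$ maps to a word $c = N_\Delta (a_0, \ldots, a_d) \in C_\Delta$ with $\sum c_i = N_\Delta$, i.e.\ of height $1$. Conversely, any height-$1$ word $c$ has $\sum c_i = N_\Delta$ and $c_i \in \{0, \ldots, N_\Delta - 1\}$, so the coefficients $a_i = c_i/N_\Delta$ give a lattice point $v = \sum a_i v_i \in \Delta \cap M$ with $(v,1)$ mapping to $c$.

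Now let $S$ be the sublattice of $M \oplus \zz$ generated by the lattice points in $\Delta \times \{1\}$. Since $S$ contains all vertices $(v_i,1)$ we have $L \subseteq S$, and $\Delta$ is spanning precisely when $S = M \oplus \zz$, equivalently when $S/L$ equals all of $(M \oplus \zz)/L \cong C_\Delta$. By the previous paragraph, the image of $S$ in $C_\Delta$ is exactly the subgroup generated by the height-$1$ words. Hence $\Delta$ is spanning iff the height-$1$ words generate $C_\Delta$. Since $C_\Delta$ is finite, this is in turn equivalent to the existence of a finite generating matrix $g$ all of whose rows have height one, proving the proposition.

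The main obstacle is purely bookkeeping: one must be attentive to the bijection between the lattice points of $\Delta \times \{1\}$ and the height-$1$ words (with vertices singled out as the pre-image of the zero word), and to the fact that a generating set of height-$1$ elements of $C_\Delta$ can be chosen to be finite, so that it arises as the row set of an actual matrix.
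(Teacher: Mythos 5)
Your proof is correct and follows the same underlying idea as the paper's, but you make the argument cleaner and more rigorous by spelling out the canonical identification $C_\Delta \cong (M\oplus\zz)/L$ and reducing the spanning condition to a subgroup equality, whereas the paper argues more informally by translating the half-open parallelepiped around $M\oplus\zz$.
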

\begin{proof}
    The "only if" direction is straightforward. Since $\Delta$ is spanning, every word in $C_\Delta$ is a combination of words that correspond to the lattice points of the simplex, this gives a generating matrix with height one rows. 

    For the "if" direction consider the half-open parallelepiped. Since $C_\Delta$ is generated by the words of height one, it means that any point inside $\Pi_\Delta$ is a linear combination with integral coefficients of the points at height one. Since we can cover $M\oplus\zz$ by translating $\Pi_\Delta$ by multiples of $(v_i,1)$ it follows that any lattice point is expressible through the points of $\Delta \times \{1\}$, i.e. $\Delta$ is spanning. 
\end{proof}
\begin{rem}
    Note that if $\Delta$ is spanning  we might need a lot of rows  to have a generating matrix with all rows having height $1$. There is a universal bound on this number, namely one needs at most  $(d+1) 2^{(d+1)}$ rows, see \cite{flatnessconstant} and \cite{eisenbrand}. 
    
     For example, the simplex corresponding to the linear code generated by 
    \[ \begin{pmatrix}
        3 & 3 & 0 & 0 &0 \\
        3 & 0 & 3 & 2 & 4 
    \end{pmatrix} \] 
    over $\zz_6$ is spanning since this code can also be generated by 
     \[ \begin{pmatrix}
        3 & 3 & 0 & 0 & 0 \\
        3 & 0 & 3 & 0 & 0 \\ 
        0 & 0 &0& 2 & 4 
    \end{pmatrix}.   \] 
However, this linear code cannot be generated by a matrix with two rows with height $1$. 

Nevertheless, when $N$ is prime and a linear code $C$ of dimension $m$ corresponds to a spanning simplex, then this code can be generated by a matrix with exactly $m$ rows with height $1$. 
\end{rem}

As we mentioned in the introduction we expect that spanning thin simplices are relatively rare. It seems that it is particularly so for the simplices with prime $N_\Delta$. In particular, computations in low dimensions ($d\leq 8$) suggest the following conjecture.
\begin{conj}
    For prime $N \geq 3$  there are no non-degenerate thin codes corresponding to spanning simplices. 
\end{conj}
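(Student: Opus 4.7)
The plan is to argue by induction on the rank $m$ of $C$ as a vector space over $\zz_N$, exploiting the hyperplane-cover description of thinness together with the integer row-sum equation $\sum_i g_{ji} = N$ provided by the spanning hypothesis. Throughout I would fix an $m \times (d+1)$ generating matrix $g$ with every row of height $1$, available by the preceding proposition on spanning codes over prime $N$, and denote its columns by $g_0, \ldots, g_d \in \zz_N^m \setminus \{0\}$. The three hypotheses then translate as follows: non-degeneracy $\Leftrightarrow$ every column is non-zero; spanning $\Leftrightarrow$ the integer sum $\sum_i g_{ji}$ equals $N$ exactly for every $j$; thinness $\Leftrightarrow$ the hyperplanes $H_i = g_i^\perp$ cover all of $\zz_N^m$. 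The crucial arithmetic leverage is that the row sums are integers equal to $N$, not merely $\equiv 0 \pmod N$.

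For the base case $m = 1$, non-degeneracy forces all entries of the single row into $\{1, \ldots, N-1\}$, so every non-zero scalar multiple has no zero coordinate, which contradicts thinness. For $m = 2$, a cover of $\zz_N^2$ by lines through the origin requires every one of the $N+1$ projective classes of $\mathbb{P}^1(\zz_N)$ to appear among the columns, and I partition them into $S_\infty$, $S_0$ and $S_{k'}$ for $k' \in \zz_N^\times$. The row-$2$ integer sum, a sum of positive integers over positions outside $S_\infty$, gives $|S_\infty| \geq d + 1 - N$, and symmetrically $|S_0| \geq d + 1 - N$; combining this with $|S_{k'}| \geq 1$ forces $d = N$ and $|S_\infty| = |S_0| = |S_{k'}| = 1$ for every $k'$. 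Then the row-$2$ equation has exactly $N$ positive integer summands adding to $N$, so every summand is $1$; substituting these values into the row-$1$ equation produces $a_{i_\infty} + \sum_{k' = 1}^{N - 1} k' = N$, that is $a_{i_\infty} = N(3 - N)/2$, which is not an element of $\{1, \ldots, N-1\}$ for any prime $N \geq 3$ — contradiction.

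For the inductive step, suppose $m \geq 3$ and the result holds for $m - 1$. Let $g'$ be the first $m - 1$ rows of $g$ and $C' \subseteq \zz_N^{d+1}$ the code they generate. Then $C'$ is spanning (its rows remain of height $1$) and inherits thinness from $C$, since $a' \cdot g' = (a', 0) \cdot g$ is a word of $C$ and must have a zero. If $C'$ is non-degenerate, the inductive hypothesis applied to $C'$ finishes the argument. Otherwise let $I = \{i : g_{ji} = 0 \text{ for all } j < m\}$ collect the columns exclusive to the last row; deleting the $I$-coordinates produces a code $C''$ of rank $m-1$ that is non-degenerate and spanning (row sums are preserved because only zero entries are deleted). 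To invoke induction on $C''$ one needs to show $C''$ is thin, equivalently that for every $a' \in \zz_N^{m-1}$ the word $a' \cdot g'$ has a zero outside $I$.

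This last thinness claim is the main obstacle: a priori the unique zero of $(a', 0) \cdot g$ might always lie in $I$, in which case deleting $I$-coordinates destroys thinness. I would attack it via one of two complementary routes. Route (a) is structural: re-choose $g$ to minimize $|I|$ using row-replacements $r_j \mapsto r_j + \alpha r_k$ compatible with the integer height-$1$ condition, and argue that the minimum is $|I| = 0$ whenever $C$ is non-degenerate. Route (b) is algebraic, via the Combinatorial Nullstellensatz applied to $f(a) = \prod_{i = 0}^d (a \cdot g_i) \in \zz_N[a_1, \ldots, a_m]$, a polynomial of degree $d + 1$ that must vanish on $\zz_N^m$ for thinness; every admissible top coefficient $[a_1^{t_1} \cdots a_m^{t_m}]f$ with each $t_j \leq N - 1$ and $\sum t_j = d + 1$ must then vanish, and each such coefficient is a permanent-like sum $\sum_\phi \prod_i g_{\phi(i), i}$ over column-to-row assignments with fibre sizes $t_j$. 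The genuine difficulty — and the point at which the conjecture presumably becomes hard — is to exhibit an exponent vector for which the integer row-sum identities $\sum_i g_{ji} = N$ prevent total cancellation modulo $N$, thereby producing a word of maximal weight and contradicting thinness directly.
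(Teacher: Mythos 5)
The statement you are addressing is stated in the paper as a \emph{conjecture}, not a theorem; there is no proof of it in the paper to compare your attempt against. The paper establishes it only for rank $m=2$ (Theorem~\ref{thm:spanning} gives $N \leq N_2 = 2$, which excludes every prime $N \geq 3$), and for $3 \leq m \leq 7$ only the finite bounds $N \leq N_m$.

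Your argument for $m \leq 2$ is correct and, for $m=2$, genuinely sharper than the paper's. Theorem~\ref{thm:spanning} proceeds via the coarse counting inequality $S(m,N) > mN$ on the minimal total entry size of $N+1$ distinct hyperplane normals. You instead observe that covering $\zz_N^2$ forces all $N+1$ projective classes of columns to occur, combine this with the integer row-sum identity $\sum_i g_{ji} = N$ to pin $d = N$ with one row's nonzero entries all equal to $1$, and derive $a_{i_\infty} = N(3-N)/2 \notin \{1,\ldots,N-1\}$ for every prime $N \geq 3$. This isolates the exact combinatorial obstruction rather than an aggregate counting deficit, and it closes the $m=2$ case cleanly.

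For $m \geq 3$, however, your induction has a genuine gap, which you yourself flag. When the top $m-1$ rows generate a degenerate code, the unique zero of a word coming from $\zz_N^{m-1} \times \{0\}$ can lie entirely inside $I$, and projecting away the $I$-coordinates destroys thinness of $C''$; neither proposed repair is executed. Route (a) asserts, without argument, that row operations compatible with the integer height-$1$ constraint (a lattice condition, not merely a mod-$N$ one) can always be used to make $I$ empty. Route (b) reduces the problem to showing that some permanent-like top coefficient of $\prod_i (a \cdot g_i)$ is nonzero modulo $N$ given only the row-sum identities, which is precisely where the work would have to be done and is left undone. As written, the proposal does not prove the conjecture — which remains open in the paper — and you should either restrict the claim to $m \leq 2$, where your argument is complete, or present the inductive scheme explicitly as a conditional reduction to the thinness-of-$C''$ claim.
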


The following proposition shows that this conjecture is true for the linear codes of dimension $2$. Moreover, it gives some weak bounds for the  codes of higher dimension.

\begin{thm} Let $\Delta$ be a spanning thin simplex and $C$ be the corresponding linear code of dimension $m\geq 2$ over $\zz_N$, with $N$ prime.  Then for each $m\geq 2$ there exists a constant $N_m$ such that  $N \leq N_m$. In particular, for small $m$ the values of $N_m$ are in the table below.  \label{thm:spanning}
\begin{table}
	\centering
\begin{tabular}{|c|c|c|c|c|c|c|} 
	\hline
	$m$ & 2 & 3 & 4 & 5 & 6 & 7  \\ \hline
	$N_m$ & 2 & 17 & 83 & 379 & 1499 & 5987  \\ \hline
\end{tabular}
\vspace{0.3cm}
\caption{Values of the constants $N_m$ appearing in Theorem~\ref{thm:spanning}. }
\label{tab:Nm} 
\end{table}

\end{thm}
\begin{proof}
Since $N$ is prime, every column has $\gcdvar_i=1$.  Since the code can be generated by a matrix $g$ with $m$ rows of height $1$, the sum of all the entries of the generating matrix is $ m N$.  Suppose there are $r$ distinct hyperplanes in $\mathbb{Z}^m_{N_\Delta}$, then they contain at most $r \cdot N^{m-1}$ points, and thus $r \geq N+1$ since $\Delta$ is thin.

 We want to estimate the sum of the entries of the normals that define these hyperplanes, that is, the sum of the entries of $g$. We will use this estimate to show that for a fixed $m$ starting from some $N$ the sum of the entries of $g$ with $N+1$ different columns can only be larger than $m N$. 

Let $H$ be a hyperplane in $\zz_N^m$ defined by a normal $h$.  Let us call $h$ minimal and denote it $h_{min}$ if the sum of its entries is the smallest possible among all the normals defining the same hyperplane. For example, consider $N=7$, $m=2$ and the hyperplane defined by the normal $h=(2,3)$. This $h$ is not minimal and the minimal one is given by $h_{min}=(3,1) = 5 \cdot(2,3)$.   Note that a minimal $h$ may not be unique.  For example,  if we consider the hyperplane defined by $(6,1)$, then all its normals are minimal.

Let us introduce a function that sends a hyperplane to the sum of the entries of its minimal normal 
\[ n: \: H \rightarrow \sum_{i=1}^m (h_{min})_i.  \] 
Let us index the hyperplanes in $\zz^m_N$  by $\mathbb{N}$, in such a way that  if the index of $H$ is smaller than the index of $\tilde{H}$ it implies     $n(H) \leq n(\tilde{H})$. For example, the hyperplane defined by $h=(1,0)$ is $H_1$,  the one defined by $(0,1)$ is $H_2$, the one defined by $(1,1)$ is $H_3$, etc.. 
We are interested in evaluating the sum 
\[ S(m,N)=\sum_{i=1}^{N+1} n( H_i). \]
Note that this sum does not depend on the chosen indexing.

Let us denote by $c_{i}(m,N)$ the number of the hyperplanes $H$ in $\zz^m_N$ with $n(H)=i$.
Define 
\[ k_{m,N} \coloneqq \min  \left\{ k \in \mathbb{N} \: : \:   \sum_{i=1}^{k+1} c_{i}(m,N) > N+1  \right \}.  \]   
Now we can write 
\[ S(m,N) = \sum_{i=1}^{ k_{m,N}} i \:  c_{i}(m,N)   + \left( N+1 - \sum_{i=1}^{ k_{m,N}} c_{i}(m,N)\right) \left(k_{m,N}+1\right). \] 
In a similar manner we define 
\[ c_i(m) \coloneqq \Bigl| \Bigl\{ (x_1,\ldots,x_m) \in \zz_{\geq0}^m \: : \: \sum_{j=1}^m x_j = i \text{ and }
\gcd(x_1,\ldots,x_m)=1 \Bigr\} \Bigr|, \] 
\[ k_{m,N}^{\zz} \coloneqq \min  \left\{ k \in \mathbb{N} \: : \:   \sum_{i=1}^{k+1} c_i(m) > N+1  \right \}  \] 
and 
\[ S^{\zz}(m,N) \coloneqq \sum_{i=1}^{k_{m,N}^{\zz}} i \: c_i(m)  + \left( N+1 - \sum_{i=1}^{k_{m,N}^{\zz}} c_i(m)\right) \left(k_{m,N}^{\zz}+1\right). \] 
Note that this sum also makes sense for a non-prime $N$. 

\begin{lem}
    For a prime $N$ we have $S(m,N)  \geq  S^{\zz}(m,N)$.
\end{lem}
\begin{proof}
    
First of all, notice that $ c_i(m) \geq c_i(m,N)$. This implies that for a given $N$ the value of
$ k_{m,N}^{\zz} $ cannot be larger than $ k_{m,N}$.  Suppose $ k_{m,N}^{\zz} = k_{m,N} - x$ for some $x \geq 0$. 
Consider 
\begin{align*} S(m,N) -  S^{\zz}(m,N) = \sum_{i=1}^{k_{m,N}-x} \left( i - \left(k_{m,N}+1\right)\right) \left(c_i(m,N) - c_i(m)\right) -\\ - \sum_{i=k_{m,N}-x+1}^{k_{m,N}} c_i(m,N) \left(k_{m,N}+1-i\right) + x\left( \left(N+1\right) - \sum_{i=1}^{k_{m,N}-x}c_i(m)\right). \end{align*}
In the second sum $k_{m,N}+1-i$ is always positive and $k_{m,N}+1-i\leq x$, therefore if we bound $k_{m,N}+1-i$ with $x$ and use $c_i(m) \geq c_i(m,N)$ we get 
\begin{align*}
    &S(m,N) -  S^{\zz}(m,N) \geq \\ &\sum_{i=1}^{k_{m,N}-x} \left( i - \left(k_{m,N}+1\right)\right) \left(c_i(m,N) - c_i(m)\right)  + x \left( \left(N+1\right) - \sum_{i=1}^{k_{m,N}} c_i(m)\right).
\end{align*}
All the summands above are non-negative, thus $S(m,N) \geq  S^{\zz}(m,N)$. 
\end{proof}

Now we want to show that for each $m\geq2$ there exists $N_m^{\zz}$ such that for $N > N_m^{\zz}$ we have $S^{\zz}(m,N) > m  N$. This would imply that for a given $m$ there cannot be any spanning thin simplices with $N > N_m^{\zz}$.

\begin{lem}
The function $S^{\zz}(m,N)$ grows with $N$, namely 
\[  S^{\zz}(m,N+1) - S^{\zz}(m,N)=k^{\zz}_{m,N}+1. \]
 Moreover, this growth is non-decreasing. 
\end{lem}
\begin{proof}
    Note that the consecutive values of $k_{m,N}^{\zz}$ can either be the same or differ by one, namely \[k_{m,N+1}^{\zz}=k_{m,N}^{\zz}+\varepsilon_{m,N}\] with $\varepsilon_{m,N} \in \{0,1\}$.
Consider 
\begin{align*}
    S^{\zz}(m,N+1) - S^{\zz}(m,N)=k^{\zz}_{m,N}+1+ \varepsilon_{m,N} \left( N+2 - \sum_{i=1}^{k^{\zz}_{m,N}+\varepsilon_{m,N}} c_i(m) \right).
\end{align*}
When $\varepsilon_{m,N}=0$ the last summand does not appear. When $\varepsilon_{m,N}=1$ it means that we actually have  exactly $\sum_{i=1}^{k^{\zz}_{m,N}+1} c_i(m) = N+2$ and overall it gives 
\[ S^{\zz}(m,N+1) - S^{\zz}(m,N)=k^{\zz}_{m,N}+1 \]
This is always positive, so $S^{\zz}(m,N)$ always grows with increasing $N$. Moreover, this growth is non-decreasing since \[ \left( S^{\zz}(m,N+2) - S^{\zz}(m,N+1) \right) - \left( S^{\zz}(m,N+1) - S^{\zz}(m,N) \right) \] can only take values $0$ and $1$.  
\end{proof}
This lemma shows that there exists  $N_m^{\zz}$  such that  for every $N >N_m^{\zz}$ the difference $\left( S^{\zz}(m,N+1) - S^{\zz}(m,N) \right) \geq m+1$. We can find $N_m^{\zz}$ computationally for small values of $m$. 
By considering for each $N_m^{\zz}$ the closest prime from below we get the values of $N_m$ for $S(m,N)$ from Table \ref{tab:Nm}.
\end{proof}

\begin{rem}
	The number $c_i(m)$ is  also known as the number of new colors that can be mixed with $i$ units of $m$ given colors, see  \cite{oeis_foundation_inc_entry_2024} for $m=3$.
	For $m=2$ it is exactly the value of the Euler's totient function $c_i(2)=\phi(i)$. 
	The generating function of $c_i(m)$ can be given by 
	\[ \sum_{i \geq 0} c_i(m) t^i = \sum_{k \geq 1} \frac{\mu(k)}{(1-t^k)^m} . \]
	\end{rem}

\subsection{Lattice width} \label{subsec:width} In this subsection we show that lattice simplices of width $1$ have a useful description in terms of the corresponding linear codes. 

 The lattice width of a lattice polytope $\Delta$ is defined as the minimum of 
\[ \max_{x \in \Delta}  ~u(x) -  \min_{x \in \Delta} ~ u(x)\]
over all non-zero integer  linear forms $u$. 

The lattice polytopes of width $1$ are also known as \textit{Cayley polytopes}.  Another equivalent definition is as follows. A  $d$-dimensional lattice polytope $\Delta$ is Cayley of length $m$, if there exists a lattice projection $\zz^d \rightarrow \zz^{m-1}$ that maps $\Delta$ onto the standard simplex $\Delta_{m-1}$.  

A few similar notions were considered by Arnau Padrol in his PhD thesis \cite{padrol2013neighborly}.  In the first one,  instead of considering the standard simplex as the image of projection, one can relax this condition and project to any simplex.  One calls a point configuration \textit{affine Cayley} of length $m$  if there exists a  projection  $\mathbb{R}^d \rightarrow \mathbb{R}^{m-1}$ that maps $\Delta$ onto the vertex set of an $(m-1)$-dimensional simplex.  

For the second notion consider the following. Let $V$ be a vector configuration. It is called \textit{affine $\text{Cayley}^\star$} of length $m$ if there exists a partition 
 \[  V = V_1 \sqcup V_2 \sqcup \ldots \sqcup V_m, \] 
 such that $\sum_{v \in V_i} v =0 $ for all $i=1, \ldots, m$.

At first, the above two definitions seem not to be related. To connect the two  one has to introduce the concept of Gale duality.    Suppose $A$ is a $d$-dimensional point configuration consisting of $n$ vectors.  Let $M$ be the $(d+1) \times n$ matrix whose columns are the coordinates of the points from $A$ with $1$ appended, that is the homogeneous coordinates of the points of $A$.  Let $b_1, \ldots, b_{n-d-1}$ be a basis of the  kernel of $M$.  Set $M^\star$ to be the matrix whose rows are exactly the vectors $b_i$.  We define the vector configuration consisting of the column vectors of $M^\star$  to be a \textit{Gale dual} $A^\star$ of $A$.   Now we can state the following. 
 
\begin{prop}[Proposition 7.22 in \cite{padrol2013neighborly}]
A point configuration is affine Cayley if and only if its Gale dual is affine $\text{Cayley}^\star$. 
\label{prop:Padrol}
\end{prop}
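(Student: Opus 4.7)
The plan is to translate both conditions into statements about subspaces of $\mathbb{R}^n$ and then to apply the defining property of Gale duality, namely that the row spaces of $M$ and $M^\star$ are orthogonal complements of each other in $\mathbb{R}^n$.

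First I will rephrase ``affine Cayley of length $m$'' in matrix terms. Taking the target $(m-1)$-simplex to be the standard simplex with vertices $e_1, \ldots, e_m$ in $\mathbb{R}^m$ (which lie in the affine hyperplane $\sum_j x_j = 1$), an affine projection $\pi: \mathbb{R}^d \to \mathbb{R}^{m-1}$ sending $A$ onto those vertices lifts, via homogeneous coordinates, to a linear map $\tilde{L}: \mathbb{R}^{d+1} \to \mathbb{R}^m$. The column-by-column requirement that $\tilde{L}$ sends each column of $M$ to one of the $e_j$ rewrites as $\tilde{L} M = B$, where $B$ is the $m \times n$ $0/1$-matrix whose $j$-th row is the indicator vector $\mathbf{1}_{V_j}$ of the preimage $V_j$ of $e_j$. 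Hence $A$ is affine Cayley of length $m$ if and only if there exists a partition $\{1, \ldots, n\} = V_1 \sqcup \ldots \sqcup V_m$ into nonempty parts whose indicator vectors all lie in the row space of $M$.

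Next I will dualize this condition. Since the rows of $M^\star$ form a basis of the orthogonal complement of the row space of $M$ inside $\mathbb{R}^n$, a vector $\mathbf{1}_{V_j}$ lies in the row space of $M$ if and only if it is orthogonal to every row of $M^\star$. Unpacking coordinate by coordinate, this is exactly the condition $\sum_{i \in V_j} a_i^\star = 0$ in $\mathbb{R}^{n-d-1}$, where $a_i^\star$ denotes the $i$-th column of $M^\star$. Thus the existence of a partition whose indicator vectors lie in the row space of $M$ is equivalent to the existence of a partition of $A^\star$ into $m$ nonempty subsets each summing to zero, which is precisely the definition of $A^\star$ being affine $\text{Cayley}^\star$ of length $m$.

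The main (mild) hurdle will be the careful bookkeeping of the affine-to-linear passage in the first step: one has to verify that an affine projection onto a full $(m-1)$-dimensional simplex is faithfully encoded, via homogeneous coordinates, by a linear map $\tilde L : \mathbb{R}^{d+1} \to \mathbb{R}^m$ with $\tilde L M = B$, and that the requirement that all $m$ parts of the partition be nonempty matches the requirement that the image be the vertex set of a genuine $(m-1)$-simplex (rather than a face of lower dimension). Once this dictionary is in place, the remainder of the proof reduces to the one-line orthogonality argument above.
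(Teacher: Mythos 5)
The paper itself gives no proof of this proposition---it is cited directly from Padrol's thesis---so there is nothing to compare against; but your argument is correct and is the standard way to establish such Gale-duality statements. The key dictionary (affine Cayley of length $m$ $\Leftrightarrow$ the indicator vectors $\mathbf{1}_{V_j}$ of some partition into $m$ nonempty parts lie in the row space of $M$) is right, and the passage to $M^\star$ via orthogonal complements of row spaces is exactly the one-line dualization one wants. The only place worth being explicit---and you flag it yourself---is the converse: given $\mathbf{1}_{V_1},\dots,\mathbf{1}_{V_m}$ in the row space of $M$, one obtains a linear $\tilde L$ with $\tilde L M = B$, and since the columns of $M$ span $\mathbb{R}^{d+1}$ and $\sum_j(\tilde L M)_{ji}=1$ for every $i$, the linear form $v\mapsto\sum_j(\tilde L v)_j$ agrees with $v\mapsto v_{d+1}$, so $\tilde L$ indeed descends to an affine map onto the hyperplane $\sum y_j=1$; the nonempty-parts hypothesis then guarantees the image hits all $m$ vertices of the standard simplex. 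With that sentence filled in, the proof is complete.
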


We are interested in the situations when the point configuration $A$ is the configuration of the lattice points of a  simplex $\Delta$.  In this case we can read the matrix $M^\star$ from the linear code $C_\Delta$. 
Suppose $\Delta$ has  $d+1+m$ lattice points.  Then there are $m$ words $c_1, \ldots, c_m$ of height $1$ in $C_\Delta$. Each such word gives a row in the matrix $M^\star$ that takes the form 
\[  M^\star =  \begin{pmatrix}
	-N_\Delta & 0 & \ldots &0 & c_{10} & c_{11} & \ldots & c_{1 d} \\ 
		0& -N_\Delta  & \ldots& 0 & c_{20} & c_{21} & \ldots & c_{2 d} \\ 
		\vdots & \vdots & \vdots & \vdots&  	\vdots & \vdots & \vdots & \vdots \\ 
			0&0  & \ldots& -N_\Delta & c_{m0} & c_{m1} & \ldots & c_{m d} \\ 
\end{pmatrix}.\]
The last $d+1$ columns correspond to the vertices of the simplex and the first $m$ columns correspond to the $m$ remaining lattice points. 
Clearly, the rows are linearly independent, so the columns of this matrix indeed give us a Gale dual. 

For linear codes there exists a natural analog of affine $\text{Cayley}^\star$ configurations. 
\begin{dfn}
 We say that an extended linear code  of length $d+1$ over $\zz_N$ \textit{splits} into $m$ parts  if there is a set partition $I_1, \ldots, I_m$   of $\{0, 1, \ldots, d\}$  such that for any $j =1, \ldots, m$ and any word $c$ we have $ \sum_{i \in I_j} c_i = 0 \mod N$. 
\end{dfn}

This definition, in particular, implies that if the code $C$ splits, then for each height $1$ word $c$  there exists $k$, such that $c_i =0$ for all $ i \in \{0, \ldots ,d\} \setminus I_k$. This in turn implies that 
the above  matrix $M^\star$ has a nice block-diagonal structure after permuting the columns. Therefore, if $C$ splits, then the corresponding simplex is affine $\text{Cayley}^\star$. Moreover, the condition of splitting is stronger than just being affine $\text{Cayley}^\star$ and  it implies that the simplex must be Cayley, which we show in the next proposition. 

\begin{prop} \label{prop: Cayley}
A lattice simplex $\Delta$ is Cayley of length $m$ if and only if the corresponding linear code  splits into $m$ parts. 
\end{prop}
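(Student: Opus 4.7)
My plan is to prove both implications by viewing the Cayley property as the existence of certain integer-valued affine linear forms on $M$, and identifying the splitting condition on $C_\Delta$ as the precise integrality criterion for such forms via the description of $M \oplus \zz$ in terms of $C_\Delta$.

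\textbf{Forward direction.} Assume $\Delta$ is Cayley of length $m$ via a lattice projection $\pi \colon M \to \zz^{m-1}$ sending $\Delta$ onto $\Delta_{m-1} = \conv(0, e_1, \ldots, e_{m-1})$. Vertices of $\Delta$ must land on vertices of $\Delta_{m-1}$, so the sets $I_1 := \{i : \pi(v_i) = 0\}$ and $I_k := \{i : \pi(v_i) = e_{k-1}\}$ for $k = 2, \ldots, m$ form a partition of $\{0, \ldots, d\}$. Lifting $\pi$ to the $\zz$-linear map $\widetilde{\pi} \colon M \oplus \zz \to \zz^{m-1} \oplus \zz$, $(x,n) \mapsto (Lx + nb, n)$ (where $\pi(x) = Lx + b$) and applying it to the defining relation $\sum_i c_i (v_i, 1) \in N_\Delta (M \oplus \zz)$ of a word $c \in C_\Delta$ yields $\sum_{k=2}^{m}\bigl(\sum_{i \in I_k} c_i\bigr) (e_{k-1}, 1) \in N_\Delta (\zz^{m-1} \oplus \zz)$. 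Reading off coordinates gives $\sum_{i \in I_k} c_i \equiv 0 \pmod{N_\Delta}$ for $k = 2, \ldots, m$, and the case $k = 1$ then follows from the extended-code identity $\sum_i c_i \equiv 0 \pmod{N_\Delta}$.

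\textbf{Reverse direction.} Given a splitting $I_1, \ldots, I_m$, for each $k = 2, \ldots, m$ I prescribe the linear form $\widetilde{L}_k \colon M \oplus \zz \to \mathbb{R}$ on the generating set $(v_0, 1), \ldots, (v_d, 1)$ by $\widetilde{L}_k(v_i, 1) = 1$ for $i \in I_k$ and $0$ otherwise. Assembling them produces an affine lattice map $\pi \colon M \to \zz^{m-1}$, $x \mapsto (\widetilde{L}_2(x, 1), \ldots, \widetilde{L}_m(x, 1))$, which by construction sends $v_i$ to $e_{k-1}$ (resp.\ to $0$) when $i \in I_k$, $k \geq 2$ (resp.\ when $i \in I_1$). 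Consequently $\pi(\Delta) = \Delta_{m-1}$, and $\pi$ is surjective because its image contains $e_1, \ldots, e_{m-1}$.

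\textbf{The main obstacle} is verifying that each $\widetilde{L}_k$ is actually integer-valued on all of $M \oplus \zz$, and not merely on the sublattice generated by the $(v_i, 1)$, which can be a proper sublattice of index a divisor of $|\Lambda_\Delta|$. I handle this through the description behind Theorem~\ref{thm:BH}: any $w \in M \oplus \zz$ admits a representation $w = \tfrac{1}{N_\Delta} \sum_i c_i (v_i, 1)$ with integers $c_i$ whose reductions modulo $N_\Delta$ form a word of $C_\Delta$ (this follows from applying the $\Lambda_\Delta$ construction to the fractional parts of the unique real coefficients of $w$ in the basis $(v_i,1)$, combined with the minimality of $N_\Delta$). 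Then $\widetilde{L}_k(w) = \tfrac{1}{N_\Delta} \sum_{i \in I_k} c_i$, which is an integer exactly by the splitting hypothesis. Thus splitting is the precise compatibility condition, and the proposition follows.
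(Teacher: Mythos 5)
Your proof is correct and, in the non-trivial direction, takes a genuinely different and more self-contained route than the paper. The paper's forward direction is essentially the same observation you make (normalize/lift the projection and read off coordinates); your version via the lift $\widetilde\pi$ is a cleaner phrasing. (One cosmetic slip: the displayed image $\sum_{k\geq 2}(\sum_{i\in I_k}c_i)(e_{k-1},1)$ drops the contribution $\sum_{i\in I_1}c_i\,(0,1)$ in the last coordinate; the correct image is $\bigl(\sum_{k\geq 2}(\sum_{i\in I_k}c_i)e_{k-1},\ \sum_i c_i\bigr)$. This does not affect the argument, since you only read off the first $m-1$ coordinates and treat $k=1$ via the extended-code identity.) For the reverse direction, the paper invokes Padrol's Gale-duality result (affine Cayley $\Leftrightarrow$ Gale dual is affine $\mathrm{Cayley}^\star$) to obtain a projection of $\Delta$ onto a dilated standard simplex, and then runs a separate comparison argument between the codes of $\Delta_1$ and $\Delta_k$ to force the dilation factor to be $1$. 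You instead construct the lattice projection directly by prescribing the linear forms $\widetilde L_k$ on the basis $(v_i,1)$ and proving integrality on all of $M\oplus\zz$ using the description of lattice points as $\tfrac{1}{N_\Delta}\sum c_i(v_i,1)$ with $(c_i)$ reducing to a word of $C_\Delta$; the splitting hypothesis is then precisely the integrality condition. This avoids the Gale-dual detour and the auxiliary $k=1$ argument, works uniformly for all $m$ (the paper writes out $m=2$ and gestures at the general case), and makes transparent why splitting is the exact translation of the Cayley property. Both approaches buy you the same proposition, but yours is more elementary and exposes the mechanism more directly; the paper's route has the side benefit of connecting the statement to the Gale-duality picture already set up in that subsection.
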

\begin{proof}
We will treat the case $m=2$. One can easily generalize it  to any $m$.  If $\Delta$ is Cayley it means, in particular, that we can find an affine unimodular transformation of $\zz^d$ such that there is a coordinate where the set of vertices of $I_1$ has a $0$ and the set of vertices $I_2$ has $1$.   Now it follows by construction that any word $c \in C_\Delta$ must have $ \sum_{i \in I_2} c_i =0 \mod N_\Delta$ and thus also $ \sum_{i \in I_1} c_i =0 \mod N_\Delta$. Therefore, the corresponding linear code  splits into two pieces.

Now suppose that the linear code splits. As we saw above, it implies that the simplex $\Delta$ is affine $\text{Cayley}$. 
Thus, after an affine unimodular transformation the set of the vertices of $\Delta$ splits into two parts $I_1$ and $I_2$. By Proposition \ref{prop:Padrol} we can write the coordinates of the vertices as follows. In the first part they have the form $v_i=(0, w_i)$ for $i \in I_1$ and  for some $w_i \in \zz^{d-1}$ and in the second part they are  of the form $v_i=(k, w_i)$ for $i \in I_2$ and  for some $k \in \mathbb{N}$. In other words, there is a projection onto the vertices of the $1$-dimensional simplex $[0,k]$.  

We are going to show that $k$ must equal $1$.
Consider two simplices $\Delta_1$ with vertices  $(0, w_i)$ for $i \in I_1$ and $(1, w_i)$ for $i \in I_2$ and $\Delta_k$ for some $k \geq 2$ with vertices  $(0, w_i)$ for $i \in I_1$ and $(k, w_i)$ for $i \in I_2$. Let $C_1$ and $C_k$ be the corresponding linear codes. Since $\Delta_1$ is Cayley, we know that the corresponding linear code splits.  Its words are defined by the rational numbers $\l_i^{(1)}$ such that
\[   \sum_{i \in I_1} \l_i^{(1)} (0, w_i,1)   + \sum_{i \in I_2} \l_i^{(1)} (1, w_i,1)   \in M \oplus \zz.   \]
The code $C_k$ is defined by the rational numbers 
$\l_i^{(k)}$ such that
\[   \sum_{i \in I_1} \l_i^{(k)} (0, w_i,1)   + \sum_{i \in I_2} \l_i^{(k)} (k, w_i,1)   \in M \oplus \zz .  \]
Note that  $N_{\Delta_1} \mid N_{\Delta_k}$.   All the tuples $(\l_0^{(1)}, \ldots, \l_d^{(1)})$ also give a tuple for the code $C_k$, therefore, 
 \[   \frac{N_{\Delta_k}}{N_{\Delta_1}} C_1  \subseteq C_k \]
 as sets. Suppose there are  tuples $(\l_0^{(k)}, \ldots, \l_d^{(k)})$ not coming from $C_1$, then necessarily $\sum_{i \in I_2} \l_i^{(k)} \notin \zz$ since otherwise it would correspond to some word of $C_1$.  This tuple then corresponds to a word that does not satisfy $ \sum_{i \in I_2} c_i =0 \mod N_{\Delta_k}$, i.e. the linear code $C_k$ does not split, a contradiction. Therefore, $k$ must be equal to $1$ and the corresponding simplex $\Delta$ is Cayley.  
\end{proof}

\section{Four-dimensional thin simplices} \label{sect: 4d}
\subsection{General strategy of classification}
This section is devoted to the proof of the classification of thin four-dimensional simplices presented in Theorem \ref{main theorem}. 
In this subsection we outline the main steps that lead to this result.   
From the previous section, we know that the search for thin simplices  is equivalent to  the search of linear codes over $\zz_N$ without maximal weight words or to the search of hyperplane arrangements  over $\zz_N$  with empty complement.  This way instead of working with $d$-dimensional simplices we can work simply with the generating matrices of linear codes. Let $C$ be a linear code over $\zz_N$ of length $d+1$ and let $g$ be an $m \times (d+1)$ matrix that generates $C$. We require this code to be extended and require that the greatest common divisor of all the entries of $g$ and $N$ is $1$, we write $\gcd(g,N)=1$.  Since the code is extended, the submodule $C$ is in fact a subgroup of $\zz_N^d$, so it is enough to consider $1 \leq m \leq d$.

\textbf{From now on let us fix the dimension to $d=4$}. From Proposition \ref{prop:gcdsum} it follows that if all the $\gcdvar_i$ satisfy $\gcdvar_i \leq N/5$, then the linear code $C$ cannot be thin.  Since each $\gcdvar_i$ is a divisor of $N$, they must be of the form $N/\a$ for $\a$ an integer. This leads to the following lemma. 
\begin{lem}
    If $C$ is thin, then at least one of the $\gcdvar_i$ is $N/\a$ with $\a \in \{2,3,4\}$.
\end{lem}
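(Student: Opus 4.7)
The plan is a direct application of Proposition \ref{prop:gcdsum} combined with a simple pigeonhole step, after writing each $\gcdvar_i$ in its most convenient form.

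First I would parametrize. Since $\gcdvar_i$ divides $N$, write $\gcdvar_i = N/\a_i$ for positive integers $\a_i$. Specializing Proposition \ref{prop:gcdsum} to $d=4$ and dividing the inequality $\sum_{i=0}^{4}\gcdvar_i > N$ by $N$ gives the thinness condition in the clean form
\[
\sum_{i=0}^{4} \frac{1}{\a_i} \;>\; 1.
\]

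Next I would run the pigeonhole step: if every $\a_i$ were at least $5$, then $\sum_{i=0}^{4} 1/\a_i \leq 5 \cdot (1/5) = 1$, contradicting the strict inequality above. Hence at least one index must satisfy $\a_i \leq 4$, i.e.\ $\a_i \in \{1,2,3,4\}$.

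Finally I would exclude the case $\a_i = 1$. Having $\a_i = 1$ means $\gcdvar_i = N$, which happens exactly when column $i$ of $g$ is identically zero in $\zz_N$; this is precisely the statement that $C$ is degenerate, equivalently that $\Delta$ is a lattice pyramid. Since the whole classification in Section \ref{sect: 4d} is carried out under the standing hypothesis that $\Delta$ is not a free join (in particular not a pyramid), we may assume $\a_i \geq 2$ for every $i$. Combining with $\a_i \leq 4$ for some $i$ yields $\a_i \in \{2,3,4\}$, as required.

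I do not expect any real obstacle here: the argument is essentially the contrapositive of Proposition \ref{prop:gcdsum} together with the bound $\sum_{i=0}^{4} 1/5 = 1$. The only point that deserves explicit mention, and which is easy to overlook, is why the value $\a_i = 1$ is ruled out — this is not forced by the necessary condition $\sum 1/\a_i > 1$ alone, but follows from the non-degeneracy of $C$ implicit in restricting attention to non-pyramidal thin simplices.
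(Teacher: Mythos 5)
Your argument is correct and matches the paper's: it is the contrapositive of Proposition \ref{prop:gcdsum} in the case $d=4$, combined with the fact that each $\gcdvar_i$ divides $N$ so the bound $\gcdvar_i > N/5$ forces $\gcdvar_i \in \{N/4, N/3, N/2, N\}$. Your explicit exclusion of $\a_i=1$ via non-degeneracy (i.e.\ a column of zeros gives a lattice pyramid, excluded by the standing non-free-join hypothesis of Section \ref{sect: 4d}) is the right justification for a point the paper leaves implicit.
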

\noindent 
For a fixed  $d$ Corollaries \ref{cor:dimN} and \ref{cor : smallest prime} suggest that we might have more thin simplices when $N$ is small.  To embark upon the classification and simplify  the proof we classify the thin linear codes with $N \leq 8$ using a computer algebra system.  The following proposition is the outcome of a programme written in SageMath available online \footnote{\href{https://doi.org/10.5281/zenodo.18016483}{https://doi.org/10.5281/zenodo.18016483}}. 
\begin{prop} \label{prop3}
    For $N \leq 8$ there are  $10$ non-isomorphic thin linear codes that are not direct sums. Six of them are presented in the Table  \ref{sporadictable} and the remaining four are members of the family  \eqref{family1}.
\end{prop}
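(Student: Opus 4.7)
The plan is to reduce the classification for $N \leq 8$ to a finite computation carried out in SageMath. First I would narrow the values of $N$ to consider. By Corollary \ref{cor : smallest prime}, since $d = 4$ the smallest prime factor $p$ of $N$ satisfies $p \leq 4$, forcing $p \in \{2, 3\}$, so the relevant values are $N \in \{2, 3, 4, 6, 8\}$. Since the rank of an extended code in $\zz_N^{d+1}$ is at most $d = 4$, only generating matrices with $1 \leq m \leq 4$ rows are needed. Proposition \ref{prop:gcdsum} provides the pruning condition $\sum_i \gcdvar_i > N$ and, in combination with the divisibility $\gcdvar_i \mid N$, forces at least one column to have $\gcdvar_i \in \{N/2, N/3, N/4\}$.

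For each admissible pair $(N, m)$ I would enumerate the $m \times 5$ matrices $g$ over $\zz_N$ whose row sums vanish modulo $N$ (so the code is extended), whose overall $\gcd$ with $N$ is $1$, and whose columns are all nonzero (so the associated simplex is not a lattice pyramid). For each surviving candidate, compute the list of $N^m$ words and verify thinness by checking that every word has at least one zero entry, or equivalently by checking that the hyperplanes $H_i = \{x \in \zz_N^m \: : \: g_i \cdot x = 0\}$ cover $\zz_N^m$. To filter out direct sums, test every nontrivial set partition $\{0, \ldots, 4\} = I_1 \sqcup I_2$ and check whether the restrictions $c \mapsto c|_{I_j}$, extended by zero, map $C$ into itself; equivalently, whether $C$ decomposes as the Cartesian product of its two projections. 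This test is exact and strictly stronger than the weight-enumerator factorization of Remark \ref{FJ_criterion}.

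To group the surviving codes into isomorphism classes, observe that $g$ and $g'$ define isomorphic codes precisely when they are related by a column permutation in $S_5$ combined with a change of basis in $\zz_N^m$; canonical representatives can be produced by sorting columns under a total order on $\zz_N^m$ and minimizing over the finite unit group of $m \times m$ matrices. The principal practical obstacle is the raw size of the search space, since for $(N, m) = (8, 4)$ a naive enumeration traverses $8^{20}$ matrices. The remedy is to enumerate hyperplane arrangements rather than matrices: the number of hyperplanes in $\zz_N^m$ is only polynomial in $N$, and one builds arrangements of $d+1 = 5$ hyperplanes whose union covers $\zz_N^m$ by backtracking, pruned at each step by the incremental version of Proposition \ref{prop:gcdsum}. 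After deduplication this yields exactly ten non-direct-sum thin codes, which sort into the six sporadic entries of Table \ref{sporadictable} and the four members of the family \eqref{family1} corresponding to the even parameters $N_\Delta \in \{2, 4, 6, 8\}$.
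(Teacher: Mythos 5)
Your proposal matches the paper's approach exactly: the paper proves Proposition \ref{prop3} purely by a finite computer search (the SageMath code on GitHub), and your plan is a valid design for precisely that search, with the correct reduction of the search space via Corollary \ref{cor : smallest prime} (so $N \in \{2,3,4,6,8\}$), the bound $m \leq d = 4$ on the number of generators, the pruning by Proposition \ref{prop:gcdsum}, and a correct (in fact stronger than Remark \ref{FJ_criterion}) exact test for direct-sum decomposability via partitions of the index set. The minor looseness in your isomorphism canonicalization step (one needs to interleave column permutation with basis change, not apply them independently) does not affect correctness, only efficiency.
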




Building on this classification     of thin linear codes with $N \leq 8$, we will gradually cover all the possible remaining cases. We will start with $m=1$, i.e. generating matrices with one row, and proceed to $m=4$. It is helpful to note that if the linear code generated by $g$ is thin, then also the linear codes generated by subsets of the rows of $g$ are thin.  Moreover, each row of $g$ must contain at least one zero, otherwise we would immediately get a word of maximal weight. 

The case when the generating matrix has only one row is quite trivial, since this row must have a zero. 
\begin{lem}
If $m=1$ and $C$ is thin, then it corresponds to a lattice pyramid. 
\end{lem}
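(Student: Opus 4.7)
The plan is to observe that with $m=1$ the entire code is cyclic and generated by a single word, and that thinness immediately forces a zero into that word.

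First, note that when $m=1$ the generating matrix is a single row $g = (g_0, g_1, g_2, g_3, g_4) \in \zz_N^5$. The linear code it generates is
\[
C = \{ a \cdot g : a \in \zz_N \},
\]
so in particular $g$ itself is a word of $C$ (take $a = 1 \in \zz_N$).

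Next, invoke thinness via Corollary \ref{prop:thincode} and its reformulation in terms of weights: since $C$ is thin, no word has maximal weight $d+1 = 5$. Applied to the word $g$, this means that at least one of its entries vanishes, say $g_i = 0$ for some $i \in \{0,1,2,3,4\}$.

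Now observe that any word $a g \in C$ has $i$-th coordinate equal to $a \cdot g_i = 0$ in $\zz_N$. Hence every word of $C$ has a zero in position $i$, so $C$ is degenerate in the sense introduced after Corollary \ref{prop:thincode}. By the cited characterization from \cite{batyrev_lattice_2013}, the corresponding lattice simplex is a lattice pyramid.

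There is no real obstacle here: the argument is essentially immediate once one recognizes that the generator is itself a word, so the weight bound forced by thinness lands directly on $g$. The point of including this lemma is just to dispose of the trivial case $m=1$ so that subsequent subsections can focus on $m \geq 2$, where the hyperplane-arrangement viewpoint of $\mathcal{H}_\Delta$ becomes substantive.
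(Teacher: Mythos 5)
Your proof is correct and matches the paper's intent: the paper disposes of the $m=1$ case by noting (just before the lemma) that every row of $g$ must contain a zero, and with a single row that zero propagates to every word, making the code degenerate. You have simply written out the one-line observation in full.
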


For the $m=2$ case we will proceed as follows. 
As we noted before, at least one of the columns of $g$ must have $\gcdvar_i = N/\a$ with $\a \in \{2,3,4\}$. If all the five columns have the same $\gcdvar_i$, then  $\gcd(g,N) \neq 1$. The same happens when four out of the five columns have the same $\gcdvar_i$ because $C$ is extended.  Let  \[ M_\a \coloneqq N/\a \]  be the maximal $\gcdvar_i$ of $g$, then we have to consider the cases when there are $1, 2$ or $3$ columns with $\gcdvar_i=M_\a$. They are covered in Subsection \ref{subsec:m=2} in the Lemmas \ref{lem:m2col1}, \ref{lem:m2col2} and \ref{lem:m2col3}, respectively. 
The outcome of these lemmas can be summarized in the following proposition.

\begin{prop} \label{prop4}
	For $m=2$ and $N\geq 9$  a non-degenerate linear code $C$ that is not a direct sum is thin if and only if $N$ is even and the generating matrix can be chosen as
  \be g = \begin{pmatrix}
	M_2 & M_2 & 0 & 0 & 0 \\
	M_2 & 0 & M_2 & 1 & N-1
\end{pmatrix}. \label{familymatrix}  \ee 
\end{prop}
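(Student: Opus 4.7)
The plan is to handle the two directions separately: sufficiency is a direct verification, while necessity follows from the case analysis already outlined in the text preceding the proposition. For sufficiency, I would verify that the code $C$ generated by the displayed matrix is thin, non-degenerate, and not a direct sum. A general word has the form
\[ a\,(M_2, M_2, 0, 0, 0) + b\,(M_2, 0, M_2, 1, N-1) = \bigl((a+b)M_2,\, aM_2,\, bM_2,\, b,\, -b\bigr) \]
in $\zz_N$. For a weight-$5$ word all five entries must be nonzero; requiring $aM_2 \neq 0$ and $bM_2 \neq 0$ forces $a$ and $b$ both odd, but then $a+b$ is even and the leading entry $(a+b)M_2$ vanishes. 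So $C$ has no maximal-weight word, hence is thin by Corollary \ref{prop:thincode}. Non-degeneracy is clear from the matrix, and for the non-direct-sum property I would invoke Remark \ref{FJ_criterion}: compute the weight enumerator $W_C(X)$ together with $h^*(\Delta_C, t) = \bigl(\tfrac{3N}{2} - 1\bigr) t^2 + \tfrac{N}{2}\, t + 1$ and rule out any simultaneous factorization of both into non-degenerate pieces.

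For the converse, I would follow the case analysis preceding the proposition. Thinness plus Proposition \ref{prop:gcdsum} gives $\sum_{i=0}^{4} \gcdvar_i > N$, and the lemma stated just above forces at least one column with $\gcdvar_i = M_\alpha := N/\alpha$ for some $\alpha \in \{2,3,4\}$. The hypothesis $\gcd(g,N) = 1$ excludes the scenario that all five columns share the value $M_\alpha$, and extendedness of $C$ combined with $\gcd(g,N)=1$ excludes exactly four columns doing so. Hence precisely one, two, or three columns attain the maximum $M_\alpha$, and these three mutually exclusive subcases are exactly what Lemmas \ref{lem:m2col1}, \ref{lem:m2col2}, \ref{lem:m2col3} dispose of. Assembling their outputs and imposing $N \geq 9$ removes the small-$N$ sporadic codes catalogued in Proposition \ref{prop3}, leaving the matrix in \eqref{familymatrix} (up to row operations and column permutations) as the only surviving shape; its form in turn forces $N$ to be even.

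The genuine obstacle I expect lies not in the present assembly, which is largely bookkeeping on top of the three lemmas, but in those supporting lemmas themselves. For the $2$-column and $3$-column subcases one has to eliminate many candidate column patterns by producing explicit words of weight $5$, and this requires a careful enumeration of how the remaining entries can be units, non-unit divisors of $N$, or repeats of previously fixed columns, aided by a symmetry reduction via row operations on $g$ and column permutations. The $3$-column subcase should be the most delicate, since the large number of top-weight columns leaves comparatively little room for the other two columns and makes the combinatorics of potential words of maximal weight more involved.
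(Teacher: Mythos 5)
Your proposal follows the paper's route for the substantive direction: necessity is exactly the reduction to Lemmas \ref{lem:m2col1}--\ref{lem:m2col3} indexed by the number of columns attaining the maximal $\gcdvar_i = M_\alpha$, and the paper's own proof of Proposition \ref{prop4} is precisely that assembly, with the hard combinatorics residing in the lemmas as you anticipate. Your sufficiency check that the matrix \eqref{familymatrix} generates a thin code via the parity computation on $(a,b)$ is correct and is a welcome explicit verification of a fact the paper leaves implicit inside Lemma \ref{lem:m2col3} and Lemma \ref{familieslemma}.

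One soft spot: invoking Remark \ref{FJ_criterion} to conclude that $C$ is not a direct sum by ``rul[ing] out any simultaneous factorization'' does not go through verbatim, because for some even $N \geq 9$ both invariants \emph{do} factor into nonnegative-coefficient pieces. Indeed $W_C(X) = 1 + \tfrac{N}{2}X^2 + (\tfrac{3N}{2}-1)X^4$, and whenever $(N/2)^2 - 6N + 4$ is a perfect square (e.g.\ $N=24$, giving $(1+5X^2)(1+7X^2)$ for $W_C$ and $(1+5t)(1+7t)$ for $h^*$), a simultaneous factorization exists. To finish, you would need the additional step used in the Remark's own worked example: argue that the candidate factors cannot be realized by actual non-degenerate codes. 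Concretely, a non-degenerate length-$2$ factor with weight enumerator $1+aX^2$, $a\geq 1$, is the code of an interval $[0,a+1]$, which has $l^* = at \neq 0$; so the length-$3$ factor would have to be thin, hence (being non-degenerate) the code of $2\Delta_2$, whose $h^* = 1+t+t^2$ does not match. Alternatively, a direct support argument closes this cleanly: $|C| = 2N$, while for every nontrivial block decomposition $I_1\sqcup I_2$ of $\{0,\dots,4\}$ one has $|C|_{I_1}|\cdot|C|_{I_2}| \geq 4N > |C|$, so $C$ is never a direct product of its restrictions.
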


Building on the $m=2$ case we can deal with the $m=3$ case using the fact that each pair of the rows of $g$ must generate a thin linear code, thus each such pair is either a multiple of a generating matrix from the $m=2$ situations or has a column of zeros.  In a similar manner we can treat the $m=4$ case.  In Subsections \ref{subsec:m=3} and \ref{subsec:m=4} we will prove the following.
\begin{restatable}{prop}{propfive} \label{prop5}
        Suppose $N\geq 9$, $g$ has three or four rows and it generates a non-degenerate linear code $C$ that is not a direct sum. If $C$ is thin, then it can be generated by a matrix with $2$ rows of the form \eqref{familymatrix}, i.e. there are no new interesting linear codes compared to the situation of $m=2$.  
\end{restatable}

All together, Propositions \ref{prop3}, \ref{prop4} and \ref{prop5} combine to Theorem \ref{main theorem} from Introduction.

\textit{Note that from now on the equality sign will predominantly mean equality mod N. In the few cases when confusion is possible we will write $=_{\zz}$ for the equality that has to be understood over $\zz$.}

\subsection{The $m=2$ case} \label{subsec:m=2}
In this part we are going to treat the generating matrices that have two rows.  
Recall that for $\a=2,3,4$
\[ M_\a \coloneqq  \frac{N}{\a}. \] 
\begin{lem} \label{lem:m2col1}
Let $N  \geq 9$.    Suppose $g$ has two rows and only one column of $g$ has $\gcdvar_i=M_\alpha$ and other columns have $\gcdvar_i<M_\a$. In this case $g$ does not generate a thin linear code, unless $g$ has a column of zeros. 
\end{lem}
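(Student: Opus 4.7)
The plan is to translate thinness into a covering problem in $\zz_N^2$ and exclude that covering by a mixture of counting and the extended-code identity. Since $\gcdvar_0 = M_\alpha = N/\alpha$, we have $g_0 = (N/\alpha)(a, b)^T$ with $\gcd(a, b, \alpha) = 1$, so a suitable row change of basis by $GL_2(\zz_N)$ brings $g_0$ into the normal form $(N/\alpha, 0)^T$, under which $H_0 = \{(x, y) : x \equiv 0 \bmod \alpha\}$ is a subgroup of index $\alpha$. Thinness then becomes the covering $H_1 \cup H_2 \cup H_3 \cup H_4 \supseteq \zz_N^2 \setminus H_0$, a set of cardinality $(\alpha - 1) N^2/\alpha$.

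The main counting tool is the inequality $|H_j \setminus H_0| \leq |H_j|(\alpha - 1)/\alpha$, with equality iff $H_j + H_0 = \zz_N^2$. Combined with the divisibility bound $\gcdvar_j \leq N/(\alpha + 1)$ (from $\gcdvar_j \mid N$ and $\gcdvar_j < N/\alpha$), this gives $\sum_{j=1}^4 |H_j \setminus H_0| \leq 4(\alpha - 1) N^2 / (\alpha(\alpha+1))$. Comparing with the target $(\alpha-1) N^2/\alpha$, thinness forces $\alpha \leq 3$, immediately ruling out $\alpha = 4$. For $\alpha = 3$ the bound is tight, forcing every $\gcdvar_j = N/4$ (hence $12 \mid N$), every $H_j$ to surject onto $\zz_3 = \zz_N^2/H_0$, and the four sets $H_j \setminus H_0$ to partition $\zz_N^2 \setminus H_0$. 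Writing $g_j = (N/4)(a_j, b_j)^T$ with $\gcd(a_j, b_j, 4) = 1$, the extended-code condition on the first coordinate reads $N/3 + (N/4) \sum_j a_j \equiv 0 \bmod N$; dividing through by $N/12$ yields $4 + 3 \sum_j a_j \equiv 0 \bmod 12$, impossible modulo $3$.

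For $\alpha = 2$ the bound leaves slack, so I refine by a $2$-adic valuation calculation showing that each $H_j$ either satisfies $H_j + H_0 = \zz_N^2$ (in which case $|H_j \setminus H_0| = (N/2) \gcdvar_j$) or is contained in $H_0$ (in which case it contributes nothing). A necessary condition for thinness is then $\sum_{j \in J} \gcdvar_j \geq N$, over the first category. If $3 \nmid N$, then either $4 \nmid N$ (and the sum is at most $4N/5 < N$, impossible) or $4 \mid N$ (and $\sum = N$ forces every $\gcdvar_j = N/4$, making every entry of $g$ a multiple of $N/4$ and contradicting $\gcd(g, N) = 1$ for $N \geq 9$). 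If $3 \mid N$, I use the projection $\pi_3 : \zz_N^2 \to \zz_3^2$: a column with $\gcdvar_j = N/3$ covers entire cosets of $3\zz_N^2$ along the line $\pi_3(H_j)$, whereas one with $\gcdvar_j < N/3$ covers only a proper subset; tracking this forces every $\gcdvar_j = N/3$ and the four pairs $(a_j, b_j) \bmod 3$ to exhaust the four lines of $\zz_3^2$. The extended-code condition on the first coordinate then reads $N/2 + (N/3) \sum_j a_j \equiv 0 \bmod N$; dividing through by $N/6$ gives $3 + 2 \sum_j a_j \equiv 0 \bmod 6$, a parity contradiction.

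The main obstacle is the $\alpha = 2$, $3 \mid N$ branch: controlling mixed configurations of large and small $\gcdvar_j$ requires the careful mod-$3$ projection analysis, after which the parity obstruction from the extended-code identity closes the argument.
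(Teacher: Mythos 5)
Your approach is genuinely different from the paper's: the paper chases explicit test points such as $(\alpha,1),(1,\alpha),(\alpha,-1),(-1,\alpha)$ through a two-case split on whether the distinguished column has a zero, whereas you normalize $g_0$ to $(N/\alpha,0)^T$ (valid by Smith normal form over $\zz_N$, and compatible with both the extended-code condition and the column $\gcd$'s) and run a counting argument in $\zz_N^2/H_0$. The counting bound $\sum|H_j\setminus H_0|\le 4\gcdvar_{\max}N(\alpha-1)/\alpha$ cleanly kills $\alpha=4$, the tightness analysis plus the mod-$3$ obstruction from the extended-code identity kills $\alpha=3$, and the divisor dichotomy plus the $\gcd(g,N)=1$ hypothesis kills $\alpha=2$ when $3\nmid N$. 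These parts are correct and arguably cleaner than the paper's.

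The gap is in the $\alpha=2$, $3\mid N$ branch. You assert that ``tracking [the mod-$3$ projection] forces every $\gcdvar_j=N/3$,'' but the projection argument as stated doesn't deliver this. What you actually establish is that a column with $\gcdvar_j<N/3$ contains no full coset $\pi_3^{-1}(p)$ for $p\neq 0$ (since that would force $|H_j|\ge N^2/3$). But the set that must be covered is $\pi_3^{-1}(p)\cap(\zz_N^2\setminus H_0)$, which is a coset of the smaller subgroup $6\zz_N\times 3\zz_N$, not of $3\zz_N\times 3\zz_N$; a single $H_j$ with $\gcdvar_j<N/3$ may still cover a substantial fraction of it, and several such $H_j$ might collectively cover it. Concretely, with $N=12$ one can have two columns of $\gcdvar=N/3$ leaving four missing directions, and two columns of $\gcdvar=N/4$ whose combined $|H_j\setminus H_0|$ is $N^2/4>2N^2/9$, so the pure count does not exclude this; one must additionally track how each $H_j$ meets each coset of $6\zz_N\times 3\zz_N$ and show the union still falls short (which it does, but nontrivially). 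Until that ``tracking'' is made explicit for all mixed configurations of $\gcdvar_j\in\{N/3,N/4,N/6,\dots\}$ with $|J|=3$ or $4$, this branch is not proved; the parity contradiction from the extended-code identity only closes the case once all four $\gcdvar_j=N/3$ has actually been forced.
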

\begin{proof}
\textit{No zeros in the column with $\gcdvar_i=M_\a$}. Suppose at first that the column with $\gcdvar_i=M_\a$ does not have zeros. Then since every row must have at least one zero, we can write 
\[ g = \begin{pNiceMatrix}[last-row]
    a_1 M_\a& b_1 & 0 & b_5 & b_7 \\ 
    a_2 M_\a  & 0 & b_4 & b_6 & b_8 \\
    H_0 & H_1 & H_2 & H_3 & H_4
\end{pNiceMatrix}\]
with $a_i \in \{0,1,\ldots,\a-1 \}$  such that $\gcd(a_1,a_2,\a)=1$ and $b_i \in \{0,1,\ldots, N-1 \}$. 
\textit{The indices $H_i$ of the columns denote the corresponding hyperplanes from $\mathcal{H}_C$ and they 
 are added for the reader's convenience.}
 
Consider a set of points 
\be \label{4pts} \left\{ (\a,1), \: (1,\a), \:(\a,-1), \: (-1,\a) \right\} \subseteq \zz_N^2. \ee
None of these points can be contained in $H_0, H_1$ or $H_2$, unless $\a b_1=0$ or $\a b_4=0$, but this would violate the assumption that only one column has $\gcdvar_i=M_\a$.  Thus, these points must be contained in the remaining two hyperplanes. Any triple of the above four points cannot lie in the same hyperplane without violating the assumptions of the lemma.  Therefore,  $H_3$ must contain a pair of the points and $H_4$ must contain the remaining pair. 

Suppose that $\a \neq 2$, then we might also consider the points
\[ \left\{ (\a,2), \:  \:(2,\a)\right\}. \]  
They are not in the hyperplanes $H_1$ or $H_2$. The hyperplane $H_0$ might contain one of these points if $\a=4$. Suppose that $(\a,2)$ is not in $H_0$. 
One can deduce that among the remaining two hyperplanes it can only be contained in the hyperplane that contains the points $( 1, \a)$ and $(-1, \a)$. Suppose it is $H_3$, then we must have $ 2 b_5=0$ and $ 4 b_6=0$. If $\a=4$, this would mean that $\gcdvar_3=M_4$, a contradiction.  Thus, we must have $\a=3$ and so   $(2,\a) \notin H_0$. This point may belong only to the hyperplane that contains $(\a,1)$ and $(\a,-1)$, that is $H_4$, and this corresponds  to $ 4 b_7=0$ and $ 2 b_8=0$.  These restrictions give us 
\[ g = \begin{pNiceMatrix}
    a_1 M_3& b_1 & 0 & N_\Delta/2 & \tilde{b}_7 N_\Delta/4 \\ 
    a_2 M_3  & 0 & b_4 & \tilde{b}_6 N_\Delta/4  & N_\Delta/2 
\end{pNiceMatrix}\]
with $ \tilde{b}_6,  \tilde{b}_7 \in \{1,3\}$. Consider the points $(1,1)$ and $(1,-1)$. One can check that now we cannot have both of these points in $\mathcal{H}_C$.

Consider now $\a=2$. Recall that each $H_3$ and $H_4$ must contain exactly a pair of points from \eqref{4pts}. Suppose $(\a,1), (1,\a) \in H_3$, then we must have
\[ g = \begin{pNiceMatrix}[last-row]
     M_2& b_1 & 0 & b_5 & b_7 \\ 
    M_2  & 0 & b_4 & b_6  & b_8 \\
    H_0 & H_1 & H_2 & H_3 & H_4
\end{pNiceMatrix}\]
with $3 b_5 = 3 b_6 = 3 b_7 = 3 b_8=0$. Consider the words corresponding to the points $(3,0)$ and $(0,3)$. They are $(M_2, 3 b_1,0, 0,0)$ and $(M_2, 0, 3 b_4, 0 ,0)$. Since $C$ is extended,  we have $ 3 b_1 =M_2$ and $ 3 b_4= M_2$. Therefore, the hyperplanes $H_1$ and $H_2$ do not contain all the points of the form $(3,x)$ and $(x,3)$ with non-zero $x$. In particular, the point $(3,2)$ has to be  either in $H_3$ or in $H_4$. It is easy to check that this is not possible in this situation.

If $(\a,1), (-1,\a) \in H_3$, the situation is similar. Now the corresponding conditions are $5 b_5 = 5 b_6 = 5 b_7 = 5 b_8=0$. It gives $ 5 b_1 =M_2$ and $ 5 b_4= M_2$. This again does not allow $H_1$ or $H_2$ to contain the point $(3,2)$. 
One can check that it also does not belong 
 to $H_3$ or $H_4$. 

If $(\a,1), (\a,-1) \in H_3$, then we have $ 4 b_5= 4 b_8 =0$ and $ 2 b_6 = 2 b_7=0$. It gives $ 4 b_1 =M_2$ and $ 4 b_4 = M_2$. Therefore, the point $(3,2)$ must be contained again in $H_3$ or $H_4$. This is not possible unless, for example, $b_5=0$ and $ 2 b_6=0$, but this would give $\gcdvar_3=M_2$. 

\textit{A zero in the column with $\gcdvar_i=M_\a$.}
Suppose the column with $\gcdvar_i=M_\a$ has a zero entry. Note that the points $(1,1)$ and $(1,-1)$ must be in $\mathcal{H}_C$, thus, the generating matrix must be of the form  
\[ g = \begin{pNiceMatrix}[last-row]
    a_1 M_\a& 0 & b_3 & b_5 & b_7 \\ 
   0  & b_2 & -b_3 & b_5 & b_8 \\
    H_0 & H_1 & H_2 & H_3 & H_4
\end{pNiceMatrix}.\]
Let us consider the following table, where the entries are the values (up to sign) on the corresponding points taken by the linear forms defining the hyperplanes $H_2$ and $H_3$.
\begin{table}[h]
  \centering
  \begin{tabular}{|c|c|c|c|c|c|c|}
    \hline
     & $(1,\a)$ & $(-1,\a)$ & $(\a+1,1)$ & $(\a+1,2)$ &$(\a+1,-1)$ & $(\a+1,-2)$ \\
    \hline
    $H_2$& $(\a-1)b_3$ & $(\a+1)b_3$  & $\a b_3$ & $ (\a-1) b_3$ & $(\a+2) b_3 $& $ (\a+3)b_3$ \\
    \hline
    $H_3$ &$(\a+1)b_5$ & $(\a-1)b_5$ & $(\a+2) b_5 $& $(\a+3) b_5$ &$ \a b_5$ &  $(\a-1) b_5$\\
    \hline
  \end{tabular}
  \label{tab:mytable}
\end{table}
None of the points from the table can be contained in $H_0$ or $H_1$.

Consider the pair of points $(\a+1,1)$ and $(\a+1,2)$.  None of them can be contained in $H_2$, since both of the conditions $\a b_3=0$ and $ (\a-1) b_3=0$ would imply $\gcdvar_i \geq M_\a$. Moreover, they cannot be both in $H_3$, since it would require $ b_5=0$. Thus, at least one of them must be in $H_4$.  The same logic applies to the pair of points $(\a+1, -1)$ and $(\a+1,-2)$. 

Suppose one of the points  $(\a+1,1)$ and $(\a+1,2)$ belongs to $H_3$, then the point $(1,\a)$ must be in $H_4$ since otherwise either $b_5=0$ or $2 b_5=0$.  On the contrary, if both of the points  $(\a+1,1)$ and $(\a+1,2)$ belong to $H_4$, then $(1,\a) \notin H_4$, since it would require $b_7=b_8=0$. Again, the same applies for the pair  $(\a+1, -1)$, $(\a+1,-2)$ and the point $(-1,\a)$. 

Therefore,  one of the following options must be satisfied: 
\begin{itemize}
    \item $(\a+1,1), \: (\a+1,-1),\: (1,\a), \: (-1, \a) \in H_4$,
    \item $(\a+1,1), \: (\a+1,-2),\: (1,\a), \: (-1, \a) \in H_4$,
    \item $(\a+1,2), \: (\a+1,-1),\: (1,\a), \: (-1, \a) \in H_4$,
    \item $(\a+1,2), \: (\a+1,-2),\: (1,\a), \: (-1, \a) \in H_4$,
    \item $(\a+1,1), \: (\a+1,2),\:(\a+1,-1), \: (\a+1,-2) \in H_4$.
\end{itemize}
It is a quick check that in the first four cases one obtains $2 b_7 = 2 b_8=0$ and hence $\gcdvar_4=N/2$, a contradiction. The last case requires $ b_8 = (\a+1) b_7 =0$. Moreover, in this case both of the points $(1,\a)$ and $(-1,\a)$  must be in $H_2 \cup H_3$, that is $(\a+1) b_3 = (\a+1) b_5=0$.  Consider the first row of $g$ and multiply it with $(\a+1)$, since $C$ is extended we have  $ a_1 M_\a + (\a+1) (b_3+b_5+b_7)  = a_1 M_\a=0$, i.e. the first column is a zero column, so the code is degenerate. 
\end{proof}

\begin{lem} \label{lem:m2col2}
    Let $N \geq 9$. Suppose $g$ has two rows and exactly two columns have $\gcdvar_i=M_\a$ and the other columns have $\gcdvar_i<M_\a$. In this situation $g$ cannot generate a thin non-degenerate linear code.  
\end{lem}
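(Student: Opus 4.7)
The plan is to mimic the test-point analysis of Lemma \ref{lem:m2col1}. After permuting columns and scaling rows, we may assume
\[ g = \begin{pmatrix}
a_1 M_\a & a_2 M_\a & b_3 & b_5 & b_7 \\
a_3 M_\a & a_4 M_\a & b_4 & b_6 & b_8
\end{pmatrix} \]
where $(a_1, a_3)$ and $(a_2, a_4)$ are primitive and distinct modulo $\a$ (so that columns $0$ and $1$ realize $\gcdvar_i = M_\a$), and where $\gcd(b_{2i-1}, b_{2i}, N) < M_\a$ for $i \in \{3, 5, 7\}$.

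The key structural observation is that $H_0$ and $H_1$ depend only on the first two columns modulo $\a$: each is the preimage, under reduction $\pi : \zz_N^2 \to \zz_\a^2$, of a primitive line $\bar H_j \subset \zz_\a^2$. Hence
\[ \zz_N^2 \setminus (H_0 \cup H_1) = \pi^{-1}(S), \quad S := \zz_\a^2 \setminus (\bar H_0 \cup \bar H_1), \]
so $C$ is thin if and only if $H_2 \cup H_3 \cup H_4 \supseteq \pi^{-1}(S)$. I would then split into the three cases $\a \in \{2, 3, 4\}$. In each, the action of row operations and column scalings (which preserve both the code up to isomorphism and the hypothesis of exactly two max-gcd columns) reduces the unordered pair $\{\bar H_0, \bar H_1\}$ to a short list of canonical forms. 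For example, when $\a = 2$ one may take $\{\bar H_0, \bar H_1\} = \{\{x \equiv 0\}, \{y \equiv 0\}\}$, so $S = \{(1,1)\}$ and the complement of $H_0 \cup H_1$ is the set of odd-odd points in $\zz_N^2$.

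For each normal form I would select test points in $\pi^{-1}(S)$, such as $(1,1), (1,-1), (3,1), (1,3), (\a+1, 1), (1, \a+1)$, mirroring Lemma \ref{lem:m2col1}. Each such point lies outside $H_0 \cup H_1$ by construction and therefore forces a congruence $x\, b_{2i-1} + y\, b_{2i} \equiv 0 \pmod{N}$ for some $i \in \{3,5,7\}$. Distributing these forced incidences among $H_2, H_3, H_4$ yields finitely many subcases. In every subcase one verifies that the resulting system of linear relations in $b_3,\dots,b_8$ is incompatible with the standing hypotheses: either a column $(b_{2i-1}, b_{2i})$ must vanish (contradicting non-degeneracy), or $\gcdvar_i = M_\a$ is forced for some $i \geq 2$ (contradicting the assumption of exactly two max-gcd columns), or $\gcd(g,N) > 1$ (contradicting the minimality of $N_\Delta$). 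The hypothesis $N \geq 9$ is used throughout to keep the chosen test points distinct in $\zz_N^2$ and to ensure that identities like $2b \equiv 0 \pmod N$ really imply $\gcdvar \geq M_2$.

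The main obstacle is the combinatorial explosion when $\a \in \{3,4\}$: the set $S$ then contains $4$ or up to $8$ residue classes, so one needs a wider array of test points (for instance $(\a+1, j)$ and $(j, \a+1)$ for several small $j$) and a correspondingly larger case split. However, the individual subcases are all of the same flavor as those in Lemma \ref{lem:m2col1}, and the extra case $\a = 4$ poses no new conceptual issue because one can track separately which of the entries $a_i$ are $0, 1, 2, 3 \in \zz_4$ and use that $\gcdvar_i < M_4 = N/4$ forces the corresponding column to avoid being a multiple of $N/4$. Together these subcase verifications show that no thin non-degenerate code with exactly two max-gcd columns can exist.
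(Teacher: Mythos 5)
Your strategy is sound in outline, and the $\pi:\zz_N^2\to\zz_\a^2$ reduction is a genuinely different and attractive organizing principle compared to the paper, which instead branches on the \emph{location of the zero entries} in the two max-gcd columns (forced by the observation that each row must contain a zero, since otherwise that row itself is a word of weight $5$). The reduction makes precise why only the residues of the first two columns mod $\a$ matter, and normalizing $\{\bar H_0,\bar H_1\}$ by $\mathrm{GL}_2(\zz_N)$ row operations would indeed shrink the search space. However, what you have written is a plan, not a proof: every genuinely hard step is phrased in the conditional (\emph{``I would then split\dots''}, \emph{``I would select test points\dots''}, \emph{``one verifies\dots''}), and the combinatorial case analysis for $\a\in\{3,4\}$ — which you yourself identify as the ``main obstacle'' — is never carried out. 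The claim that these subcases ``pose no new conceptual issue'' is not borne out by the paper's argument: in several of its subcases the conclusion is not simply a forced zero column or an oversized $\gcdvar_i$ but rather a divisibility $N/8\mid g$ or $N/4\mid g$, which contradicts $\gcd(g,N)=1$ only because $N\ge 9$, i.e.\ one is silently falling back on the computer classification for $N\le 8$ (Proposition \ref{prop3}). Your description of the role of $N\ge 9$ misses this; the hypothesis is not about keeping test points distinct but about excluding exactly those already-classified residue classes.

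Two smaller but real inaccuracies. First, column scalings by units do \emph{not} ``preserve the code up to isomorphism'' in the paper's sense (isomorphism is by coordinate permutation only); they do preserve thinness, non-degeneracy, and the column gcds, so the normalization is still legitimate for this lemma, but the justification you give is wrong and should be replaced by the weaker, correct statement. Second, you do not exploit the constraint that every row of $g$ must have a zero, which is the engine behind the paper's subdivision and drastically reduces the free parameters; without it you would need a substantially larger array of test points per canonical form, and you have given no argument that the fixed finite set you list suffices in all the $\a=3,4$ normal forms. As it stands, the proposal is a reasonable research outline that reproduces the \emph{method} of Lemma \ref{lem:m2col1} but does not establish Lemma \ref{lem:m2col2}.
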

\begin{proof}
\textit{No zeros in the columns with $\gcdvar_i=M_\a$}.
    Suppose first that the columns with $\gcdvar_i=M_\a$ don't have any zeros, then 
     \NiceMatrixOptions
  {
    code-for-last-row = \scriptstyle , 
    code-for-first-col = \scriptstyle , 
  }
    \[ g = \begin{pNiceMatrix}[last-row]
     a_1 M_\a& a_3 M_\a & 0 & b_3 & b_5 \\ 
    a_2 M_\a  &  a_4M_\a & b_2 & 0 & b_6 \\
    H_0 & H_1 & H_2 & H_3 & H_4
\end{pNiceMatrix}.\]  
As in the previous lemma consider the points 
\[ \left\{ (\a,1), \: (1,\a), \:(\a,-1), \: (-1,\a) \right\}. \]
Under the assumptions none of these points can be contained in the first four hyperplanes. They cannot all be contained in $H_4$ as well. Therefore, the complement of $\mathcal{H}_C$ is not empty.  

\textit{A zero in one of the columns with $\gcdvar_i=M_\a$}.
    Suppose that one of the columns with $\gcdvar_i=M_\a$ has a zero, hence we can write
 \[ g= \begin{pNiceMatrix}
     a_1 M_\a& 0 & b_1 & b_3 & b_5 \\ 
    a_2 M_\a  &  a_4M_\a & 0 & b_4 & b_6 \\

\end{pNiceMatrix}.\] 

Suppose the points $(1,1)$ and $(1,-1)$ do not belong to $H_0$, then we must have 
 \[ g = \begin{pNiceMatrix}[last-row]
     a_1 M_\a& 0 & b_1 & b_3 & b_5 \\ 
    a_2 M_\a  &  a_4M_\a & 0 & -b_3 & b_5 \\
    H_0 & H_1 & H_2 & H_3 & H_4
\end{pNiceMatrix}.\]
Consider the points $(\a,1)$ and $(\a,\a+1)$.  They cannot be in $H_3$ since it would correspond to $(\a-1) b_3=0$ (violates that $\gcdvar_3<M_\a$) or $b_3=0$ (gives a column of zeros). Therefore, both of them must be in $H_4$, but this would give $\a b_5=0$, that is, $\gcdvar_4=M_\a$. 

Suppose now that the point $(1,1)$ is not in $H_0$, but $(1,-1)$ is.  Note that in this case the hyperplane containing $(1,1)$ cannot contain any of $(\a,1)$ and $(\a,\a+1)$. Therefore, both of these points must lie in the remaining hyperplane $H_4$, so we can write 
 \[ g_ = \begin{pNiceMatrix}
     a_1 M_\a& 0 & b_1 & b_3 & b_5 \\ 
    a_1 M_\a  &  a_4M_\a & 0 & -b_3 & -\a b_5
\end{pNiceMatrix}\]
with $\a^2 b_5=0$. 
Consider the word corresponding to the point $(0,\a)$
\[ (0, 0, 0, -\a b_3, 0). \]
Since the linear code must be extended, it is necessary to have $\a b_3 =0$ and $\gcdvar_3=M_\a$.  

Suppose that the point $(1,-1)$ is not in $H_0$, but $(1,1)$ is. The hyperplane containing $(1,-1)$ cannot contain $(\a,-1)$, otherwise it would have $\gcdvar_i>M_\a$. Therefore, we can write 
\[ g = \begin{pNiceMatrix}[last-row]
     a_1 M_\a& 0 & b_1 & b_3 & b_5 \\ 
    -a_1 M_\a  &  a_4M_\a & 0 & b_3 & \a b_5\\
    H_0 & H_1 & H_2 & H_3 & H_4
\end{pNiceMatrix}. \]
Since $(1,1) \in H_0$, $(1,-1) \notin H_0$  and the zeroth column has no zero entries, we must have $\a \neq 2$.  So we can consider the point $(2,-1)$. Clearly, it is not contained in the first 4 hyperplanes, and the last one can contain it only if $(\a-2) b_5=0$. If $\a=3$, then $b_5=0$, and the last column is a column of zeros. If $\a=4$, then $ 2 b_5=0$ and the last column is $( b_5, 0)^t$, but then $\gcdvar_4=M_2$.

We have deduced that both $(1,1)$ and $(1,-1)$ must belong to $H_0$. This is possible only if $\a=2$ and $a_1=a_2=1$. Using the fact that $C$ is extended we have 
\[ g = \begin{pNiceMatrix}[last-row]
     M_2 & 0 & b_1 & b_3 & M_2-b_1-b_3 \\ 
    M_2  &  M_2 & 0 & b_4 & -b_4\\
    H_0 & H_1 & H_2 & H_3 & H_4
\end{pNiceMatrix}. \]
Consider the  points $(\a,1)$ and $(2\a,1)$. If both belonged to $H_3$ we would have $ \a b_3=0$ and $b_4=0$, that is, $\gcdvar_3=M_2$.  The same is true for the pair of points $( \a,-1)$ and $( 2 \a, -1)$.  We have to distribute the points in these pairs between $H_3$ and $H_4$. It is enough to consider only two cases out of four.

Suppose $(\a,1)$ and $( 2 \a, -1)$ belong to $H_3$. The other two points must be in $H_4$. It gives 
\[  \a b_3 +b_4= 2 \a b_3 - b_4 = -\a (b_1+b_3)+b_4= - 2 \a (b_1+b_3)-b_4=0. \]
From this we deduce that $ 3 b_4=0$ and $ 3 \a b_3 = 3 \a b_1 =0$. Under this conditions none of $H_3$ and $H_4$ can contain the point $(\a, \a+1)$ unless $\a b_4=0$. Since $\a=2$,  this is equivalent to $ 2 b_4=0$, which together with $3 b_4=0$ gives $b_4=0$. Thin, in turn, together with $ \a b_3 + b_4=0$ implies $ 2 b_3=0$, that is $\gcdvar_3=M_2$.

Now suppose $(\a,1)$ and $( \a, -1)$ belong to $H_3$.
It gives 
\[  \a b_3 +b_4=  \a b_3 - b_4 = -2 \a (b_1+b_3)-b_4= - 2 \a (b_1+b_3)+b_4=0. \]
It follows that $ 2 b_4=0$. We cannot have $b_4=0$ since it would give $\gcdvar_3=M_\a$, therefore we need $\a=2$ and $ b_4= M_2$. Moreover, we must have $ 2\a b_3 = 4 b_3 =0$ and $ 4 \a b_1 =8 b_1 =0$. Therefore, we can write 
\[ g = \begin{pNiceMatrix}
     M_2 & 0 & \tilde{b}_1 N/8 & \tilde{b}_3 N/4 & M_2-\tilde{b}_1 N/8- \tilde{b}_3 N/4  \\ 
    M_2  &  M_2 & 0 & M_2 & M_2
\end{pNiceMatrix}. \]
for $\Tilde{b}_1 \in \{1,2, \dots, 7\}$ and $\tilde{b}_3 \in \{1,3\}$.  We see that $N/8 \mid g$ and we can reduce to the existing classification of linear codes with $N\leq 8$.  

\textit{zeros in the columns with $\gcdvar_i=M_\a$}. 
Suppose that both columns with $\gcdvar_i=M_\a$ have a zero. Suppose at first that these zeros are in the different rows.  Since the first two hyperplanes do not contain $(1,1)$ and $(1,-1)$ we can write 
 \[ g = \begin{pNiceMatrix}[last-row]
     a_1 M_\a& 0 & b_1 & b_3 & b_5 \\ 
    0 &  a_4M_\a & -b_1 & b_3 & b_6 \\
    H_0 & H_1 & H_2 & H_3 & H_4
\end{pNiceMatrix}.\] 
Assume that $\a \neq 2$. Consider the points $(2,1), \: (1,2),\: (2,-1), \: (-1,2)$. One can check that none of them can belong to the first four hyperplanes under the given assumptions, thus, they must be in $H_4$, but this is possible only if it is a zero column. 

Now consider $\a=2$. Since the code is extended, $b_5= M_2 -b_1 -b_3$ and $b_6 = M_2 +b_1 - b_3$.
Consider the following table that contains points and the values (up to a sign) achieved on these points  by  the linear functionals defining $H_2,H_3$ and $H_4$.
\begin{table}[h]
  \centering
  \begin{tabular}{|c|c|c|c|c|c|c|}
    \hline
     & $(3,1)$ & $(3,-1)$ & $(1,3)$ & $(1,-3)$ &$(5,1)$ & $(5,-1)$ \\
    \hline
    $H_2$& $ 2 b_1$ & $ 4 b_1$  &  $2 b_1$ & $4 b_1$ & $ 4 b_1$& $ 6 b_1 $ \\
    \hline
    $H_3$ & $ 4 b_3$ & $ 2 b_3$ & $ 4 b_3 $ & $ 2 b_3 $ &$ 6 b_3$ & $ 4 b_3$\\
    \hline
    $H_4$ & $  2 b_1 + 4 b_3 $ & $  4 b_1 + 2 b_3$ & $2b_1 -4b_3$ & $4 b_1 - 2 b_3$ & $4 b_1 + 6 b_3$ & $  6 b_1 + 4 b_3$  \\
    \hline
  \end{tabular}
\end{table}

All of the above points will be in the hyperplane arrangement if $ 4 b_1 = 4 b_3=0$, but this would imply $N/4 \mid g$. If both $ 4 b_1 \neq 0$ and $ 4 b_3 \neq 0$ then some of these points are not in $\mathcal{H}_C$. Therefore, exactly one of $ 4 b_1$ and $ 4 b_3$ must be zero.  We can choose $ 4 b_1 =0$ and $ 4 b_3 \neq0$. The other case  clearly gives an equivalent linear code.  Now the point $(3,1)$ must belong to $H_4$, i.e. we must have $ 2 b_1 + 4 b_3 =0$, which implies $ 8 b_3=0$. Thus,  $ N/8 \mid g$ and we again reduced to the existing classification of codes with $N\leq 8$. 

Now suppose that both of the zeros from the columns with $\gcdvar_i=M_\a$ are in the same row, so we have 
 \[ g = \begin{pNiceMatrix}
     a_1 M_\a& a_3 M_\a & b_1 & b_3 & b_5 \\ 
    0 &  0 & -b_1 & b_3 & b_6 \\
\end{pNiceMatrix}.\] 
In the same way as in the previous case we deduce that we need $\a=2$, thus 
 \[ g = \begin{pNiceMatrix}
     M_2&  M_2 & b_1 & b_3 & -b_1-b_3 \\ 
    0 &  0 & -b_1 & b_3 & b_1-b_3 
\end{pNiceMatrix}.\] 
The last table remains unchanged for this situation, thus we reduce again to the situation $N \leq 8$.
\end{proof}

\begin{lem} \label{lem:m2col3}
   Let $N \geq 9$. Suppose $g$ has two rows and exactly three columns have $\gcdvar_i=M_\a$ and the other columns have $\gcdvar_i<M_\a$.  In this case for even $N$ there exists a thin linear code that is not a direct sum and it is generated by 
     \be g = \begin{pmatrix}
	M_2 & M_2 & 0 & 0 & 0 \\
	M_2 & 0 & M_2 & 1 & N-1
\end{pmatrix}. \label{lemmamatrix} \ee 
Note that these linear codes correspond to simplices of width $1$ due to Proposition \ref{prop: Cayley}.
\end{lem}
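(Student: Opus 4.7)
For even $N$ and $g$ as in \eqref{lemmamatrix}, the first three columns determine the hyperplanes
$H_0 = \{(x,y) \in \zz_N^2 : x+y \equiv 0 \pmod 2\}$, $H_1 = \{(x,y) : x \equiv 0 \pmod 2\}$ and $H_2 = \{(x,y) : y \equiv 0 \pmod 2\}$.
For any $(x,y) \in \zz_N^2$, if $x$ and $y$ share the same parity then $(x,y) \in H_0$, and otherwise exactly one coordinate is even so $(x,y) \in H_1 \cup H_2$. Thus $\zz_N^2$ is already covered by just the first three hyperplanes, so $C$ is thin. Non-degeneracy follows from the entry $1$ in column $3$. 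To show $C$ is not a direct sum, I would observe that the only partition of the columns $\{0, \ldots, 4\}$ into two blocks with each block's column-sum vanishing mod $N$ is $\{0,1,2\} \sqcup \{3,4\}$, and a direct-sum decomposition compatible with this partition would require $(0,0,0,1,N-1) \in C$. Parametrising words of $C$ as $(a_1 M_2 + a_2 M_2, a_1 M_2, a_2 M_2, a_2, -a_2)$ with $(a_1,a_2) \in \zz_N^2$, this word would force $a_2 = 1$ but also $a_2 M_2 = 0$, a contradiction for $N \geq 4$.

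\textbf{Uniqueness for $\alpha = 2$.} Following the style of Lemmas \ref{lem:m2col1} and \ref{lem:m2col2}, I split into cases on $\alpha$ and on the placement of zeros among the three $M_\alpha$-columns. Each $M_2$-column is, up to multiplication by a unit of $\zz_N$, one of the three vectors $(M_2, 0)^t$, $(0, M_2)^t$, $(M_2, M_2)^t$. If all three types appear, a column permutation places them in positions $0,1,2$ as in \eqref{lemmamatrix}; the extended-code condition yields $2M_2$ columns summing to zero and forces the remaining two columns to be $b$ and $-b$ with $\gcd(b,N) < M_2$. Unit row scalings (which fix the $M_2$-entries of the first three columns since any unit of $\zz_N$ is odd when $N$ is even) together with a column swap then reduce $b$ to $(0,1)^t$, recovering \eqref{lemmamatrix}. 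If fewer than three types appear, the $M_2$-hyperplanes cover only the $3N^2/4$ points of $\zz_N^2$ with at least one even coordinate; the $N^2/4$ remaining points (both coordinates odd) must be covered by two hyperplanes each of size at most $N \cdot M_4 = N^2/4$. A careful inspection of specific test points such as $(1,1), (1,3), (3,1), (3,3)$, combined with the extended-code condition, rules out this covering for $N \geq 9$ without either forcing $\gcd(g,N) > 1$ or producing a zero column.

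\textbf{Ruling out $\alpha = 3, 4$.} For $\alpha = 3$, the three $M_3$-hyperplanes reduce modulo $3$ to at most three lines through the origin in $\zz_3^2$; by inclusion-exclusion such lines cover only $3 \cdot 3 - 3 + 1 = 7$ of the $9$ points of $\zz_3^2$, leaving $2(N/3)^2$ uncovered points in $\zz_N^2$. These uncovered points have their mod-$3$ reductions confined to a specific line of $\zz_3^2$, so any covering hyperplane must itself respect this modular structure. Each of the remaining two hyperplanes has $\gcdvar < M_3$ and thus fewer than $N^2/3$ points; a point-counting argument in the spirit of the preceding lemmas shows covering the uncovered region forces $\gcd(g,N) > 1$, reducing to the already classified situations with $N \leq 8$. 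The $\alpha = 4$ case is analogous, using that three lines in $\zz_4^2$ cover at most $10$ of the $16$ points.

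\textbf{Main obstacle.} The hardest step is the $\alpha = 2$ subcase in which only one or two of the three column types are realised: the size inequality on hyperplanes is nearly tight, and the argument must exploit the precise algebraic constraint imposed by the extended-code condition on the entries of the two remaining columns. Additionally, reducing the free parameter $b$ to the canonical value $(0,1)^t$ in the "three distinct types" case requires some bookkeeping with unit row scalings and column swaps, since a priori there is a two-parameter family of choices for $b$. The $\alpha = 3, 4$ analyses, while conceptually clearer (counting in $\zz_\alpha^2$), also require examining several sub-configurations of the three $M_\alpha$-columns and their possible intersections with the remaining two hyperplanes.
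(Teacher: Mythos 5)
Your verification that the code generated by \eqref{lemmamatrix} is thin, non-degenerate and not a direct sum is correct, and the observation that the three $M_2$-hyperplanes already cover all of $\zz_N^2$ is a clean way to see thinness. However, the uniqueness part has a genuine gap at precisely the step you flag as delicate. You claim that once the three $M_2$-columns occupy positions $0,1,2$ and the remaining columns are $b$ and $-b$, "unit row scalings together with a column swap" reduce $b$ to $(0,1)^t$. This mechanism is not strong enough. Even allowing arbitrary $GL_2(\zz_N)$ row operations (which do preserve the set of $M_2$-columns, since any such transformation is invertible mod $2$), one can bring $b$ to $(0,1)^t$ only when $\gcd(b_1,b_2,N)=1$; but the hypotheses only give $\gcd(M_2,b_1,b_2,N)=1$ and $\gcd(b_1,b_2,N)<M_2$, so $\gcd(b_1,b_2,N)$ can be a nontrivial divisor of $N$ (e.g.\ $N=10$, $b=(2,4)$). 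More importantly, you omit a case that does occur: when $b_1$ and $b_2$ are both even (possible when $4\nmid N$), the resulting code is actually a direct sum and therefore should be \emph{excluded}, not reduced to \eqref{lemmamatrix}. The paper handles this whole step in Lemma \ref{familieslemma} by an explicit enumeration of all words of the code, split according to the parities of $b_1$ and $b_2$; the direct-sum case only emerges from that enumeration. Without an argument of this kind the uniqueness claim is not established.

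Separately, the passages ruling out $\alpha=3,4$ and the "fewer than three types appear" subcase for $\alpha=2$ are sketches rather than proofs: phrases like "a careful inspection of specific test points \dots rules out this covering" and "a point-counting argument in the spirit of the preceding lemmas" name the strategy without carrying it out. In the paper these are concrete, several-step case analyses (placing explicit points such as $(1,\a)$, $(-1,\a)$, $(2,1)$, $(3,1)$, deducing divisibility constraints, and arriving either at $\gcd(g,N)>1$ or a zero column), and the covering obstruction is tight enough that one cannot dispatch it by announcing which test points to use.
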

\begin{proof}
\textit{No zeros in the columns with $\gcdvar_i=M_\a$}.
    Suppose that the columns with $\gcdvar_i=M_\a$ don't have any zeros, then 
     \NiceMatrixOptions
  {
    code-for-last-row = \scriptstyle , 
    code-for-first-col = \scriptstyle , 
  }
    \[ g = \begin{pNiceMatrix}
     a_1 M_\a& a_3 M_\a & a_5 M_\a  & 0 & b_3 \\ 
    a_2 M_\a  &  a_4M_\a & a_6 M_\a  & b_2 & 0 \\
\end{pNiceMatrix}.\]  
In this case  $M_\a \mid g$ since the linear code is extended. 

\textit{At least one zero in the columns with $\gcdvar_i=M_\a$ but only in one row}.
Consider  \[ g = \begin{pNiceMatrix}[last-row]
     a_1 M_\a& a_3 M_\a & a_5 M_\a  & 0 & b_3 \\ 
    a_2 M_\a  &  a_4M_\a & 0  & b_2 & b_4 \\
    H_0 & H_1 & H_2 & H_3 & H_4
\end{pNiceMatrix}\] 
with possibly $a_2$ and $a_4$ being zero and other $a_i$ being nonzero. 
Since the linear code is extended, $b_3$ is divisible by $M_\a$.
Consider the points $(1,\a)$ and $(-1,\a)$. They cannot be contained in the first four hyperplanes, therefore, both of them belong to $H_4$. This leads to $ 2 \a b_4 =0$ and $ 2 b_3=0$, forcing $\a \neq 3$ since $M_\a \mid b_3$.  If $\a=2$, then $N/4$ divides $g$. So we need to consider only $\a=4$.

Consider the points $(2,1)$ and $ (3,1)$. Since $ 2 b_3 =0$  none of these points can be in $H_4$ unless it is a column of zeros or $\gcdvar_4=N/2$. 
Thus, they must belong to the hyperplanes $H_0$ and $H_1$.  Clearly, they cannot be in the same one of these hyperplanes.  We can assume that $(2,1) \in H_0$ and $(3,1) \in H_1$.  This implies that the generating matrix $g$ is divisible by $N/8$ so we can reduce to the case $N=8$. 

\textit{Two columns with $\gcdvar_i=M_\a$ have zeros in different rows.}
Consider
 \[ g = \begin{pNiceMatrix}[last-row]
     a_1 M_\a&0  & a_5 M_\a  & b_1 & b_3 \\ 
    a_2 M_\a  &  a_4M_\a & 0  & b_2 & b_4 \\
    H_0 & H_1 & H_2 & H_3 & H_4
\end{pNiceMatrix}\]  
with possibly one of $a_1$ or $a_2$ being zero.
Suppose at first that $\a \neq 2$.  The points $(1,1)$ and $(1,-1)$ cannot belong to the same hyperplane unless its $\gcdvar_i=M_2$, therefore, we have to distribute them among two different hyperplanes. Suppose that $(1,1) \in H_3$ and $(1,-1) \in H_4$.  This implies that the points $(2,1), \:(2,-1),\: (1,2)$ and $(1,-2)$ do not belong to neither of these hyperplanes, unless one of them is a zero column or they have $\gcdvar_3=\gcdvar_4=M_3$. Thus, all these points must belong to $H_0$, but it is possible only if it is a zero column. 

Another option is to have $(1,1) \in H_0$ and $(1,-1) \in H_3$ (the situation of $(1,-1) \in H_0$ and $(1,1) \in H_3$ is equivalent).  We have 
 \[ g = \begin{pNiceMatrix}[last-row]
     a_1 M_\a&0  & a_5 M_\a  & b_1 & b_3 \\ 
    -a_1 M_\a &  a_4M_\a & 0  & b_1 & b_4 \\
    H_0 & H_1 & H_2 & H_3 & H_4
\end{pNiceMatrix}.\] 
The points $(2,1)$ and $(1,2)$ cannot belong to $H_0$ and also they do not belong to $H_3$ since it would imply $\gcdvar_3=M_3$. The only possibility left is $(2,1), \: (1,2) \in H_4$, but this  implies $\gcdvar_4=M_3$ as well. 

Consider now $\a=2$. The generating matrix takes form
\[ g = \begin{pNiceMatrix}[last-row]
     a_1 M_2&0  &  M_2  & b_1 & (-a_1+1) M_2 - b_1 \\ 
    M_2 & M_2 & 0  & b_2 & -b_2 \\
    H_0 & H_1 & H_2 & H_3 & H_4
\end{pNiceMatrix}.\] 
Suppose $a_1=0$. Then without loss of generality $(1,1)$ must be in $H_3$. The point $(3,1)$ then must be in $H_4$, which would imply $ 4 b_1 =0$ and $N/4 \mid g$.  

So we have to consider only the case when $a_1=1$ and 
\be \label{family} g = \begin{pNiceMatrix}
     M_2&0  &  M_2  & b_1 &  - b_1 \\ 
    M_2 & M_2 & 0  & b_2 & -b_2 
\end{pNiceMatrix}.\ee
It is easy to see that for any choice of $b_1$ and $b_2$ these linear codes are thin.  The proof  is concluded by the following lemma

\begin{lem}
\label{familieslemma}
For any non-zero choice of $(b_1,b_2)$ the linear code $C$ generated by \eqref{family} is either a direct sum or it is isomorphic to the code generated by 
        \[ g = \begin{pmatrix}
            M_2 & M_2 & 0 & 0 & 0 \\
            M_2 & 0 & M_2 & 1 & N-1
        \end{pmatrix}. \] 
The corresponding simplex has 
\[h^*(\Delta_C,t) = \left(\frac{3 N}{2}  -1\right) t^2 + \frac{N}{2} t +1.\] 
\end{lem}

\begin{proof}
Let us describe all the words in the linear code generated by \eqref{family}.
First, let us show that all the words of the form \[ (0, 0,0, 2 k, -2 k) \]
 for $k\in\{0,1,\ldots, N/2-1\}$ appear in $C$. For this it is enough to show that there are coefficients $(c_1, c_2)$ that give the word $(0,0,0,2, N-2)$. If $\gcd(b_1, b_2, M_2) \neq 1$, then it divides $g$  and we reduce to the same type of situation but for a lower $N$. Therefore, we can assume that $\gcd(b_1, b_2, M_2)=1$. By Bezout's identity there exist integers $x,y,z$ such that $ x b_1 + y b_2 +z M =_{\zz}1$ consequently $ 2 x b_1 + 2 y b_2 + z N =_{\zz}2$. Considering this identity modulo $N$ we see that we can take $ (c_1, c_2) = ( 2 x, 2 y) \mod N$.  Note that there are no words of the form $(0,0,0, k,-k)$ for odd $k$. 

Suppose, $b_1$ and $b_2$ are odd.  Since $\gcd(b_1, b_2, N)=1$ there are integers $x,y,z$ such that 
\[ x b_1 + y b_2 + z N=_{\zz}1. \]
Since $N$ is even and $b_1, b_2$ are odd, exactly one of  $x$ and $y$ must be even. 
Considering this identity modulo $N$ we deduce that there are such coefficients $(c_1,c_2)$ that give us words $( M_2, M_2, 0, 1, -1)$ and $(M_2,0, M_2, 1, -1)$. Multiplying these with odd numbers we get all the words  of the form
\[ (M_2, M_2,0, 2 k +1, -2 k -1) \quad \text{ and } \quad  (M_2, 0, M_2, 2 k +1, -2 k -1).\]

We are left to consider the coefficients $(c_1,c_2)$ with both $c_i$ odd.
Take $(c_1,c_2)=(b_2,-b_1)$. This gives us the word $(0, M_2, M_2, 0,0)$. Adding to this word all the words of the form $(0,0,0, 2 k, -2 k)$ from before we can get all the words of the form 
\[ (0,M_2, M_2, 2 k, -2k). \] 
This way we exhausted all the possible coefficients $(c_1, c_2)$. 

Now suppose that $b_1$ is odd and $b_2$ is even. The same considerations as above allow one to deduce that the corresponding linear code $C$ contains all the words of the form $(0, M_2, M_2, 2 k+1, -1 - 2k), \: (M_2, M_2, 0, 2k+1, -1 - 2 k), \: (M_2,0,M_2, 2k, -2k)$. So $C$ is not exactly the same as the codes with odd $b_1$ and $b_2$ but a  permutation of the first three columns gives an isomorphism between them. 

Finally, suppose that both $b_1$ and $b_2$ are even. This is possible only if $ 4 \nmid N$ since otherwise we would have $ 2 \mid g$. Since $4 \nmid N$, we have odd $M_2$. Consider the word $(M_2, M_2,0, b_1, -b_1)$ and multiply it with $M_2$. Since $b_1$ is even, it gives $(M_2, M_2, 0, 0, 0)$. In a similar way we obtain $(M_2,0, M_2, 0,0)$ from $(M_2,0, M_2, b_2, -b_2)$. We also have $(0, M_2, M_2, 0,0)$ as their sum. Now adding the words $(0,0,0, 2k, -2k)$ to these, we obtain all the possible words in $C$. Having the list of all the words in $C$,  we see that  in the case of $b_1$ and $b_2$ being even the linear code can be generated by the matrix 
\[ \begin{pmatrix}
    M_2 & M_2 & 0& 0&0 \\
    M_2 & 0 & M_2 & 0& 0 \\
    0 & 0 & 0 & 2 & -2
\end{pmatrix} \]
as well.  Thus, this  is a direct sum and the corresponding simplex is a free join. 

Since for different choices of $b_1$ and $b_2$ the codes are isomorphic, we can simply fix $(b_1,b_2)=(0,1)$ and arrive so at the statement of the lemma.  It is easy to compute the corresponding $h^*$-polynomials, since the words correspond to the integral points of the half-open parallelepiped. 
\end{proof}
\end{proof}
With this lemma we have covered all the possible cases of $g$ with two rows. Now we can move on to the generating matrices with three rows.

\subsection{$m=3$ cases} \label{subsec:m=3}

\setcounter{thm}{4}

\begin{prop}[Part 1] \label{m=3 1}
        Suppose $N\geq 9$, $g$ has three  rows and it generates a non-degenerate linear code $C$ that is not a direct sum. If $C$ is thin, then it can be generated by a matrix with $2$ rows of the form \eqref{familymatrix}. 
\end{prop}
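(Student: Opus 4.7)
The plan is to reduce the three-row case to the two-row classification via the hereditary property of thinness under passing to subcodes. Label the rows of $g$ as $r_1, r_2, r_3 \in \zz_N^5$. The fundamental observation is that any submodule of $C$ is itself a linear code and inherits the absence of weight-$5$ words; in particular, for each pair $\{i,j\} \subseteq \{1,2,3\}$ the submodule $\langle r_i, r_j \rangle$ is a thin extended code, to which the previously obtained classification applies.

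First I would note that every individual row $r_i$ must contain a zero entry, since otherwise $r_i$ itself is a word of maximal weight in $C$. Then, for each pair $\{r_i, r_j\}$, I apply the classification encoded in Propositions \ref{prop3} and \ref{prop4}: after passing to the minimal modulus $N_{ij}\mid N$ compatible with the pair, the two-row code they generate is either degenerate (has a zero column), a direct sum, isomorphic to the family matrix \eqref{familymatrix}, or one of the finitely many sporadic codes with $N_{ij} \leq 8$ enumerated in Proposition \ref{prop3}.

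Next I would perform a case analysis on the triple of structures attached to the three pairs, using the hypotheses that $C$ is non-degenerate and not a direct sum to rule out configurations. If all three pairs were degenerate, the placements of zero columns would either produce a global zero column of $g$ or induce a partition of the column indices forcing $C$ to split as a direct sum, both of which are excluded. In the central case, where at least two pairs realize the form \eqref{familymatrix}, the first three columns of $g$ are essentially built from the $2 \times 3$ block $\bigl(\begin{smallmatrix} M_2 & M_2 & 0 \\ M_2 & 0 & M_2 \end{smallmatrix}\bigr)$ extended by a single further row; the $\zz_N$-relations among its rows, combined with the extendedness condition and the constraint that each remaining entry of $r_3$ be consistent with both pairs $\{r_1,r_3\}$ and $\{r_2,r_3\}$ being of the same family form, force $r_3$ into the $\zz_N$-span of $r_1, r_2$. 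A sequence of elementary row operations then reduces $g$ to a $2 \times 5$ generating matrix of the shape \eqref{familymatrix}.

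The main obstacle I expect is the mixed configurations — one pair degenerate, another a direct sum, and the third of family type — because one must simultaneously track the placement of zero columns demanded by each pair and preserve the minimality condition $\gcd(\text{entries of } g, N) = 1$ throughout the reductions. The boundary subcases in which some $N_{ij} \leq 8$ are controlled by direct appeal to Proposition \ref{prop3}, but enumerating the matchings of the sporadic codes with extensions by a third row still requires care; the payoff is that each such extension either already lies in the family \eqref{family1} or contradicts the non-degeneracy or non-direct-sum hypothesis on $C$.
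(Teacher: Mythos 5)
Your proposal follows the paper's strategy precisely — each pair of rows must generate a thin code, so reduce to the two-row classification and then examine which third row can be appended — and that framing is correct. The gaps are in the two central reductions, which you assert rather than prove, and which in fact require the hyperplane machinery the paper uses throughout.

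First, the claim that ``if all three pairs were degenerate, the placements of zero columns would \ldots induce a partition of the column indices forcing $C$ to split as a direct sum'' is false as stated. When each pair of rows has a zero column, the matrix has the shape
\[
g=\begin{pmatrix}
d_1 & 0 & 0 & b_1 & b_4 \\
0 & d_2 & 0 & b_2 & b_5 \\
0 & 0 & d_3 & b_3 & b_6
\end{pmatrix},
\]
and nothing forces the last two columns to respect the block structure. The paper must run the point-covering argument on the eight points $(\pm1,\pm1,\pm1)$ to show that thinness forces either $N/4\mid g$ (reducing to the computer-checked range) or a genuine direct sum; without that argument your dichotomy has no justification. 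Second, the claim that two pairs of family type force $r_3$ into the $\zz_N$-span of $r_1,r_2$ is also too quick: the analysis (the paper's conditions (\ref{cond1})--(\ref{cond7})) produces several branches where the three-row code is not generated by the first two rows but is instead isomorphic to a direct sum with a $C_{2\Delta_2}$ factor — this is the content of Lemma \ref{lemma_FJ} and the parity analysis around it — and other branches where the obstruction is a specific point such as $(1,1,2)$ failing to be covered for $N\geq9$. Finally, you never actually handle the configuration where one pair (after dividing out the common factor) is a sporadic code with small $N_{ij}$; the paper shows concretely that appending a third row to multiples of cases 2, 3, 4, 6 always forces $k\mid g$, and that step is not a consequence of anything you wrote. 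So the architecture is right, but each of your ``the placements \ldots force \ldots'' sentences hides a genuine hyperplane-covering argument that would need to be supplied.
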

\setcounter{prop}{9}

\begin{proof}
If a matrix $g$ with three rows generates a thin linear code, then matrices constructed from the pairs of rows of $g$ must also generate thin linear codes.  We can choose the first two rows to be a multiple of a generating matrix of a thin code and add a third row to it. For the first two rows we have six options: a matrix with a column of zeros, multiples of the cases 2,3,4,6 from Table \ref{sporadictable} and multiples of the matrices from Lemma \ref{familieslemma}.
We have to treat them one by one. 

\textbf{Case 2 from Table \ref{sporadictable}}. We look at the generating matrices over $\zz_{3 k}$ for $k\geq 3$ of the form 
\[  g = \begin{pmatrix}
    0 & 0 & k& k & k \\
    k & 2 k & 0 & k & 2 k  \\
    a_0 & a_1 & a_2 & a_3 & a_4 
\end{pmatrix} \] 
such that $\gcd(g)=1$. 
If we want the first and the third  rows to generate a thin code, then we need $a_0=0$ ($a_1=0$ is equivalent). Same argument applied to the second and third row gives $a_2=0$. We reduced to 
\[  g = \begin{pmatrix}
    0 & 0 & k& k & k \\
    k & 2 k & 0 & k & 2 k  \\
    0 & a_1 & 0 & a_3 & -a_1-a_3 
\end{pmatrix}. \] 
Consider the points $(1,1,1)$ and $(1,2,1)$.  One can check that they cannot be both in $\mathcal{H}_C$ unless $k \mid g$.  

\textbf{Cases 3,4 and 6 from Table \ref{sporadictable}}.  All these cases behave quite similarly in this situation. We present here only the case 3.   We look at the generating matrices over $\zz_{4 k}$ for $k\geq 3$ of the form 
\[  g = \begin{pmatrix}
    0 & 0 & k& k & 2 k \\
   2  k & 2 k & k & 3k & 0  \\
    a_0 & a_1 & a_2 & a_3 & a_4 
\end{pmatrix}. \] 
Since the first and the third row must generate a thin code, we need $a_0=0$ ($a_1=0$ is equivalent).  The second and the third row 
generate a thin code under one of the following conditions: 
\begin{enumerate}
    \item $a_4=0$,
    \item $a_1=2 k$, $a_4= 2k$. 
\end{enumerate}
In the first scenario we have 
\[  g = \begin{pmatrix}
    0 & 0 & k& k & 2 k \\
   2  k & 2 k & k & 3k & 0  \\
    0 & a_1 & a_2 & -a_1-a_2 & 0
\end{pmatrix}. \] 
Consider again the points $(1,1,1)$ and $(1,2,1)$. One can check, that we cannot cover both of them unless $k \mid g$.

In the second scenario we have 
\[  g = \begin{pmatrix}
    0 & 0 & k& k & 2 k \\
   2  k & 2 k & k & 3k & 0  \\
    0 & 2k & a_2 & -a_2 & 2k
\end{pmatrix}. \] 
The point $(1,1,2)$ can be in $\mathcal{H}_C$ if either $a_2=0$ or $2 a_2= 2 k$, but both of these conditions lead to $ k \mid g$. 



\textbf{First two rows given by a generating matrix belonging to the family \eqref{family1}}. 
Let $N$ be even. Consider generating matrices over $\zz_{ N}$ of the form 
\[ g = \begin{pmatrix}
    M_2 & M_2 & 0 & 0 & 0 \\
    M_2 & 0 &M_2 & 1 & -1\\
    a_0 & a_1 & a_2 & a_3 & a_4
\end{pmatrix}. \] 
One can  show that if we considered instead the first two rows of $g$ multiplied by an integer $k \geq 2$, then similar to the already treated situations  we would arrive at $ k \mid g$. Therefore, it is enough to consider the matrix above. 

First of all, we need that the the first and the third rows 
\[ \begin{pmatrix}
    M_2 & M_2 & 0 & 0 & 0 \\
    a_0 & a_1 & a_2 & a_3 & a_4
\end{pmatrix}\] 
generate a thin linear code. It is possible if any of the following conditions is true: 
\begin{enumerate}
    \item $a_2=0$,  \label{cond1}
    \item $a_3=0$, \label{cond2} 
    \item $a_2 = M_2$, $a_0=M_2$, $a_1=0$, \label{cond3}
    \item $a_2 = M_2$, $a_0=0$, $a_1=M_2$.\label{cond4}
\end{enumerate}
We also need the same for the second and third rows
\[\begin{pmatrix}
    M_2 & 0 &M_2 & 1 & - 1 \\
    a_0 & a_1 & a_2 & a_3 & a_4
\end{pmatrix}. \] 
It is possible if 
\begin{enumerate}
\setcounter{enumi}{4}
    \item  $a_1=0$, \label{cond5}
    \item  \label{cond6}$a_1=M_2$, $a_0=M_2$, $a_2=0$,
    \item \label{cond7} $a_2 = M_2$, $a_0=0$, $a_1=M_2$ (note, this is the same as condition (\ref{cond4})). 
\end{enumerate}

There are the following consistent pairs of the above conditions that do not trivially give a linear code equivalent to the one generated by the first two rows: (\ref{cond1}) and (\ref{cond5}), (\ref{cond1}) and (\ref{cond6}), (\ref{cond2}) and (\ref{cond5}), (\ref{cond3}) and (\ref{cond5}), (\ref{cond4}) and (\ref{cond7}).

Let us consider the above pairs one by one.

\textit{Conditions (\ref{cond1}) and (\ref{cond5})}.  In this case we have 
\[ g= \begin{pNiceMatrix}[last-row]
    M_2 & M_2 & 0 & 0 & 0 \\
    M_2 & 0 &M_2 & 1 & - 1 \\
    a_0 & 0 & 0 & a_3 & a_4\\
    H_0 & H_1 & H_2 & H_3 & H_4
\end{pNiceMatrix}.\]
If $(1,1,1) \in H_0$, then $a_0=0$ and $a_4=-a_3$. If $a_3$ is even, the the third row is a linear combination of the first two as we have seen in Lemma \ref{familieslemma}. If $a_3$ is odd, then we can show that the resulting linear code is equivalent to the one generated by 
\be \label{free} g= \begin{pmatrix}

    M & M & 0 & 0 & 0 \\
    M & 0 &M & 0 & 0 \\
    0 & 0 & 0 & 1& -1
\end{pmatrix}.\ee 
See Lemma \ref{lemma_FJ} below.

If $(1,1,1) \notin H_0$, then up to a permutation of the last two columns it must lie in $H_3$. We have 
\[ g= \begin{pNiceMatrix}[last-row]
    M _2& M_2 & 0 & 0 & 0 \\
    M_2 & 0 &M_2 & 1 & - 1 \\
    a_0 & 0 & 0 & -1 & 1-a_0 \\
    H_0 & H_1 & H_2 & H_3 & H_4
\end{pNiceMatrix}.\]
Consider the point $(1,1,-1)$. It cannot be in the hyperplane arrangement unless $a_0=0$ (which leads to the situation of Lemma \ref{lemma_FJ}) or $a_0= 2 $ in which case the point $(1,1,2)$ is neither in $H_0$ nor in $H_4$ as long as $N \geq 9$. 

\begin{lem}
    \label{lemma_FJ}
    The linear code over $\zz_N$ generated by 
    \[ \label{g1}g= \begin{pmatrix}
    M_2 & M_2 & 0 & 0 & 0 \\
    M_2 & 0 &M_2 & 1 & - 1 \\
    0 & 0 & 0 & a & -a
\end{pmatrix}\]
with  odd $a$ is isomorphic to the code generated by 
\[ \label{g2} g_0= \begin{pmatrix}
    M_2 & M_2 & 0 & 0 & 0 \\
    M_2 & 0 &M_2 & 0 & 0 \\
    0 & 0 & 0 & 1 & -1
\end{pmatrix}.\]
This is a direct sum, in particular, the corresponding simplex is a free join of $ 2 \Delta_2$ and a $1$-dimensional interval of length $N$.  
\end{lem}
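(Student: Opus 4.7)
The plan is to show that $C$ is in fact \emph{equal} (not merely isomorphic) to the code generated by $g_0$, via elementary row operations that exploit the oddness of $a$. Since any permutation is then trivial, the isomorphism assertion follows.

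First I observe that twice the second row reduces modulo $N$ to $(0,0,0,2,-2)$, since $2M_2 = N \equiv 0 \pmod{N}$. So $C$ contains both $(0,0,0,2,-2)$ and the third row $(0,0,0,a,-a)$. Because $a$ is odd, $\gcd(2,a)=1$, so by Bezout there exist integers $s,t$ with $2s+ta = 1$. The combination $s\cdot(0,0,0,2,-2) + t\cdot(0,0,0,a,-a)$ then equals $(0,0,0,1,-1) \in C$.

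Next I would replace the third row of $g$ by $(0,0,0,1,-1)$, which loses no information since the original third row is $a\cdot(0,0,0,1,-1)$. Subtracting the new third row from the second cancels its last two entries and yields exactly $g_0$. Hence $C$ is generated by $g_0$. Since $g_0$ is block-diagonal with supports $\{0,1,2\} \sqcup \{3,4\}$, it splits as a direct sum: the first block is $M_2$ times the $\zz_2$-code of $2\Delta_2$ from Example \ref{2d2example}, and the second block is the length-$2$ extended $\zz_N$-code generated by $(1,-1)$, corresponding to the interval of length $N$. Applying the direct-sum/free-join correspondence from the lemma preceding Remark \ref{FJ_criterion}, the associated simplex is the free join of $2\Delta_2$ and $[0,N]$.

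There is no real obstacle in this argument, and the only subtle point worth flagging is that the Bezout identity $2s + ta = 1$ is needed only modulo $N$, so it applies regardless of whether $\gcd(a,N) = 1$; the coefficients $s,t$ may be taken as arbitrary integer lifts of their residues in $\zz_N$.
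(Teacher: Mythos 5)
Your proof is correct and follows essentially the same strategy as the paper: both arguments show the two codes are literally \emph{equal} (not merely isomorphic) by exploiting the oddness of $a$ to produce $(0,0,0,1,-1)$ as an integer combination of the rows of $g$ (the paper writes down an explicit determinant-$1$ transformation $\text{row}_3 \mapsto (1-a)\cdot\text{row}_2 + \text{row}_3$, $\text{row}_2 \mapsto a\cdot\text{row}_2 - \text{row}_3$, while you double the second row and invoke Bezout with $\gcd(2,a)=1$, which amounts to the same thing). Both then read the direct-sum decomposition off the block structure of $g_0$.
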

\begin{proof}
   Since $a$ is odd, the following equalities hold \begin{align*}
    &(M_2,0, M_2, 0,0) = a \; (M_2, 0, M_2, 1, -1) - \; ( 0, 0, 0, a,-a), \\
    &(0,0,0,1,-1) = (1-a) \;(M_2, 0, M_2, 1,-1) +  \; (0 , 0, 0, a,-a) \end{align*}
   This is an invertible linear transformation between the last two rows of $g$  and $g_0$. 
Therefore,  the linear codes generated by $g$ and $g_0$ are isomorphic. 
\end{proof}

\textit{Conditions (\ref{cond1}) and (\ref{cond6}).} In this case the generating matrix is 
\[ g= \begin{pNiceMatrix}
    M_2 & M_2 & 0 & 0 & 0 \\
    M_2 & 0 &M_2 & 1 & - 1 \\
    M_2 & M_2 & 0 & a_3 & -a_3
\end{pNiceMatrix}.\]
We can substitute the third  row by the difference of the third and the first rows. 
Now again it is either the linear code generated by the first two rows if $a_3$ is even or it is the situation of the Lemma \ref{lemma_FJ} if $a_3$ is odd. 

\textit{Conditions (\ref{cond2}) and (\ref{cond5}).} In this case the generating matrix is 
\[ g= \begin{pNiceMatrix}[last-row]
M_2 & M_2 & 0 & 0 & 0 \\
    M_2 & 0 &M_2 & 1 & - 1 \\
a_0 & 0 & a_2 & 0 & -a_2-a_0\\
H_0 & H_1 & H_2 & H_3 & H_4
\end{pNiceMatrix}.\]
Consider the words corresponding to the points $(1,2,1)$ and $(1,2,-1)$. They are 
\[ (a_0+M_2, M_2, a_2 , 2  , - 2 - a_2 - a_0), \quad  (-a_0+M_2, M_2, -a_2 , 2  , - 2 + a_2 + a_0). \]
There are exactly three options how both of them can have a zero. 
The first one is $a_2=0$, i.e. \[ g= \begin{pNiceMatrix}
M_2 & M_2 & 0 & 0 & 0 \\
    M_2 & 0 &M_2 & 1 & - 1 \\
a_0 & 0 & 0 & 0 &-a_0\\
\end{pNiceMatrix}.\]
The point $(1,1,1)$ 
can be contained only in $H_4$, i.e. $a_0=-1$. Consider now the word corresponding to the point $(1,1,2)$. It is $( - 2, M_2, M_2,1,1)$, so the code generated by $g$ cannot be thin. 

The second option is to have $a_2 = -a_0 - 2  = -a_0 + 2$. This implies $4 =0$, so necessarily $N=4$. 

The only option left is to have $a_0=M_2$, which gives 
 \[ g= \begin{pNiceMatrix}
M_2 & M_2 & 0 & 0 & 0 \\
    M_2 & 0 &M_2 & 1 & - 1 \\
M_2 & 0 & a_2 & 0 &M_2-a_2\\
\end{pNiceMatrix}.\]
Consider  the points $(1,1,1)$ and $(1,1,-1)$. They correspond to the words 
\[ (M_2, M_2, M_2+a_2, 1, M_2 - 1 -a_2), \quad (M_2, M_2, M_2-a_2, 1, M_2 - 1 +a_2). \] 
There are two ways for both  of these words to have a zero. It is possible if  $ a_2 = M_2 -1 = M_2 +1$, but it implies $2 =0$, that is $N=2$.  The other option is $ a_2 = M_2$. 
Now we can substitute the second row with the difference of the second and the third rows and we see that  this linear code is a direct sum and the corresponding simplex is a free join of $ 2 \Delta_2$ and the interval of length $N$.

\textit{Conditions (\ref{cond3}) and (\ref{cond5}).}  In this case the generating matrix is
\[ g= \begin{pNiceMatrix}
    M_2 & M_2 & 0 & 0 & 0 \\
    M_2 & 0 &M_2 & 1 & - 1 \\
    M_2 & 0 & M_2 & a_3 & -a_3\\
\end{pNiceMatrix}.\]
If $a_3$ is odd, then the last row is already a word in the linear code generated by the first two rows, so we need to consider only even $a_3$. In this case from the last two rows we can get the words \begin{align*} &(M,0,M,0,0) =  a_3 \: ( M, 0, M, 1, -1) - (M, 0, M, a_3, -a_3), \\ 
 &(0,0,0,1,-1) = ( a_3+1) \: (M,0,M,1,-1) - (M,0,M, a_3, -a_3). \end{align*} 
 This is an invertible transformation, so  the generating matrix 
\[ \begin{pNiceMatrix}
    M_2 & M_2 & 0 & 0 & 0 \\
    M_2 & 0 &M_2 & 0 & 0 \\
    0 & 0 & 0 & 1 & -1\\
\end{pNiceMatrix}\]
generates the same linear code.

\textit{Conditions (\ref{cond4}) and (\ref{cond7}).}  In this case the generating matrix is
\[ g= \begin{pNiceMatrix}
    M_2 & M_2 & 0 & 0& 0 \\
    M_2 & 0 &M_2 & 1&  -1 \\
    0 & M_2 & M_2 & a_3 & -a_3
\end{pNiceMatrix}.\]
Again we have either a direct sum or the third row is a combination of the first two. The proof is almost identical to the previous case.

\textbf{A column of zeros in every pair of rows }
So far we have showed that if any pair of the rows of a generating matrix with three rows is non-degenerate, then there are no new interesting linear codes comparing to the case of just two rows. 
The only option left to consider is when all the restrictions from three rows to two rows have a column of zeros.
In this case the generating matrix takes the form
     \[ g=\begin{pNiceMatrix}[last-row]
    d_1 & 0 & 0 & b_1 & b_4 \\
    0& d_2 & 0& b_2 & b_5 \\
    0 & 0& d_3 & b_3 & b_6  \\ 
H_0 & H_1 & H_2 & H_3 & H_4
\end{pNiceMatrix}. \]
We will show that this matrix cannot generate a thin linear code for $N\geq 9$ unless this code is a direct sum. 

    Consider the points $(1,1,1), \: (1,1,-1), \:(1,-1,1),\:(-1,1,1)$. They must be contained in the union  $H_3 \cup H_4$. One can check that if one of these hyperplanes contains three out of the four points, then it necessarily contains the fourth one as well. Thus, 
    up to permuting these hyperplanes there are two possible situations: either $H_3$ contains  two out of four points or all of them.

    Let us start with the case when all the four points are in $H_3$. It requires $ 2 b_1 = 2 b_2 = 2 b_3 =0$ and $ b_1 + b_2 + b_3=0$. We can choose $ b_1=b_2=M_2$ and $b_3=0$.  Now we have
     \[ g=\begin{pNiceMatrix}[last-row]
    d_1 & 0 & 0 & M_2& M_2-d_1\\
    0& d_2 & 0& M_2 & M_2-d_2 \\
    0 & 0& d_3 & 0 & -d_3  \\ 
H_0 & H_1 & H_2 & H_3 & H_4
\end{pNiceMatrix}. \]
Consider the points of the form $(2,\pm1,\pm1)$. They can be either in $H_0$ or $H_4$. If $ 2 d_1 \neq 0$, then all of them must be in $H_4$. In that case $ 2 d_1 = 2 d_2 = 4 d_1 =0$, which implies  $N/4 \mid g$.  Therefore, we must have $ 2 d_1 =0$.  By considering the points $(\pm 1, 2, \pm 1)$ we arrive in the same way at $ 2 d_2 =0$.  Now we have 
      \[ g=\begin{pNiceMatrix}
    M_2 & 0 & 0 & M_2& 0\\
    0& M_2 & 0& M_2 & 0 \\
    0 & 0& d_3 & 0 & -d_3  
\end{pNiceMatrix}, \]
which is a direct sum.

Now consider the situation when each of $H_3$ and $H_4$ contain only two out of the four points. Say, $(1,1,1), \:(1,1,-1)$ are in $H_3$ and $(1,-1,1), \:(-1,1,1)$ are in $H_4$. 
It implies the generating matrix must be of the form
     \[ g=\begin{pNiceMatrix}
    d_1 & 0 & 0 & b_1 & b_4 \\
    0& d_2 & 0& \tilde{ b}_3 M_2 -b_1 & \tilde{b}_6 M_2+b_4 \\
    0 & 0& d_3 & \tilde{b}_3 M_2 & \tilde{b}_6 M_2 
\end{pNiceMatrix} \]
with $\tilde{b}_i=0$ or $1$. We cannot have $\tilde{b}_3=\tilde{b}_6$  since it would imply $d_3=0$. It is enough to consider $\tilde{b}_3=0$ and $\tilde{b}_6=1$, which gives 
$d_3=M_2$. Since the code is extended we can write 
  \[ g=\begin{pNiceMatrix}[last-row]
    d_1 & 0 & 0 & b_1 & -d_1-b_1 \\
    0& M_2+d_1+2 b_1 & 0&  -b_1 &  M_2-d_1-b_1 \\
    0 & 0& M_2 & 0 &  M_2 \\ 
H_0 & H_1 & H_2 & H_3 & H_4
\end{pNiceMatrix} \]
Consider the point $(2,1,1)$. If it is in $H_0$, then $ d_1 = M_2$. In this case, the point $(1,2,1)$ cannot belong to $H_1$ since it would give $ 4 b_1=0$ and $N/4 \mid g$, so this point must belong to $H_4$ giving $ 3 b_1 =0$.  Consider now the point $(3,2,1)$.  The only option is $(3,2,1) \in H_4$, but now this would imply  $b_1=0$ giving a few columns of zeros. 

Suppose now that $(2,1,1) \in H_4$,  i.e. $ - 3 ( d_1+b_1)=0$. Consider the point $(1,2,1)$. If it belongs to $H_1$, then $ 2 d_1 + 4 b_1 =0$, implying $b_1=d_1$ and $N/6 \mid g$. The other option is to have $(1,2,1) \in H_4$, which leads to $M_2 - 3 (d_1+b_1)=M_2=0$, a contradiction.

Now  we have considered all the possible generating matrices of interest with three rows. 
\end{proof}
\subsection{ $m=4$ case}  \label{subsec:m=4}
\setcounter{prop}{4}
\begin{prop}[Part 2] \label{prop:m=4}
        Suppose $N\geq 9$, $g$ has four rows and it generates a non-degenerate linear code $C$ that is not a direct sum. If $C$ is thin, then it can be generated by a matrix with $2$ rows of the form \eqref{familymatrix}. 
\end{prop}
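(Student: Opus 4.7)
The plan is to mirror Part 1 by applying the $m=3$ classification just established to every triple of rows of $g$. Since $C$ is thin, every sub-code obtained by selecting three of the four rows is also thin. Part 1 then asserts that, for $N \geq 9$, each such triple falls into exactly one of: (i) degenerate, i.e.\ the $3 \times 5$ sub-matrix has a column of zeros; (ii) the generated code is a direct sum; or (iii) the generated code equals the one generated by a $2$-row matrix of the form \eqref{familymatrix}.

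If some triple is of type (iii), we replace those three rows of $g$ by the two generating rows of the corresponding matrix \eqref{familymatrix} without changing $C$, obtaining a three-row generator of $C$. Part 1 applied directly to this new matrix then concludes. We may therefore assume every one of the four triples is of type (i) or (ii).

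Suppose first that every triple is of type (i). A single column of $g$ can witness at most one triple, since the zero patterns of two distinct triples have union $\{0,1,2,3\}$, which would force the column to vanish identically and contradict non-degeneracy. Hence there are four distinct witnessing columns, and up to column permutation
\[
g = \begin{pmatrix}
d_0 & 0 & 0 & 0 & b_0 \\
0 & d_1 & 0 & 0 & b_1 \\
0 & 0 & d_2 & 0 & b_2 \\
0 & 0 & 0 & d_3 & b_3
\end{pmatrix},
\]
with all $d_i \neq 0$ and $b_i = -d_i$ by the extended condition. For each sign pattern $\varepsilon \in \{1,-1\}^4$ the word $\varepsilon \cdot g$ has first four coordinates $\varepsilon_i d_i \neq 0$, so thinness forces $\sum_i \varepsilon_i d_i \equiv 0 \pmod N$. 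Comparing $(1,1,1,1)$ with $(-1,1,1,1)$ and its analogues yields $2 d_i \equiv 0$ for every $i$, so $d_i = M_2$ and $N$ is even; but then every entry of $g$ is a multiple of $M_2 = N/2 > 1$, contradicting $\gcd(g,N)=1$.

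It remains to treat the case where some triple, say $(r_0,r_1,r_2)$, is of type (ii): its sub-code splits along a non-trivial partition $\{0,\ldots,4\} = I_1 \sqcup I_2$, and for $C$ itself to remain non-direct-sum the fourth row $r_3$ must have non-zero entries in both $I_1$ and $I_2$. The plan here is to pair $r_3$ with each of $r_0, r_1, r_2$ in turn and apply Proposition 4: each such pair generates a thin code, yet one member is supported inside a single block while $r_3$ straddles the partition, which is very restrictive. Enumerating the possible block shapes $2{+}3,\; 1{+}1{+}3,\; 1{+}2{+}2,\; 1{+}1{+}1{+}2$ and, for each, running through the admissible values of $r_3$ by a case analysis analogous to that in Part 1, I expect every surviving choice either makes $r_3$ redundant modulo the first three rows (reducing to case (iii)) or forces $C$ to split into a direct sum after row operations. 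This final case verification is the main technical obstacle, but no new mechanisms beyond those used in Part 1 are required.
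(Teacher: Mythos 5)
Your plan to bootstrap from the $m=3$ classification is the right idea, and the two parts you actually carry out are sound: the reduction in case (iii) is a clean observation, and your analysis of the case where every triple is degenerate is correct and matches the paper's (one deduces $d_i = M_2$ for all $i$, forcing $N/2 \mid g$ and contradicting $\gcd(g,N)=1$ for $N\geq 9$). However, there are two genuine gaps.

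First, the trichotomy you extract from Part~1 is incomplete. Part~1, like all the classification statements in Section~3, is stated under the standing hypothesis that $\gcd(g,N)=1$. When you pass to a $3\times 5$ sub-matrix $g'$ of $g$, you may well have $\gcdvar(g',N)=k>1$, in which case the sub-code is really a code over $\mathbb{Z}_{N/k}$, and $N/k$ can be $\leq 8$. In that regime Part~1 says nothing, and you must fall back on the computer classification of Proposition~\ref{prop3}. In particular, the triple can be (a scalar multiple of) the sporadic case~5 from Table~\ref{sporadictable}, which is a genuinely $3$-dimensional thin code over $\mathbb{Z}_4$ that is neither degenerate, nor a direct sum, nor expressible by a $2$-row matrix of form~\eqref{familymatrix}. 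The paper devotes an explicit case to ``multiples of the case~5 matrix'' for exactly this reason, showing that the remaining triples of the $4$-row matrix then fail to have the three columns with $\gcdvar_i = M_2$ that the $m=3$ classification demands.

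Second, the case where some triple is of type (ii) is only sketched. You describe a plan and enumerate the possible block shapes, but you conclude with ``I expect every surviving choice\dots'' — that is not a proof. The paper disposes of this case by observing that the only $3$-row direct sums that survive the Part~1 analysis are those with a $C_{2\Delta_2}$ factor, and that adding a fourth row to such a configuration cannot avoid producing a direct sum again; you would need to actually carry out the analogous verification. As it stands, the proposal does not establish the proposition.
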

\setcounter{prop}{10}
\begin{proof}
    Each pair and triple of the rows of $g$ should generate a thin linear code, therefore we have the following three options for the first three rows
    \begin{itemize}
        \item multiples of the case 5 from Table \ref{sporadictable},
        \item a direct sum with a factor $ C_{2 \Delta_2}$, 
        \item three rows with a column of zeros. 
    \end{itemize}
From the proof of Proposition \ref{m=3 1} it follows that the direct sum case can only lead to a thin code that is again a direct sum. 

Consider the situation when the first three rows is a  multiple of the generating matrix of the case 5 from Table \ref{sporadictable}. We have 
   \[ g=\begin{pNiceMatrix}
    0 & 0 & k & k & 2k \\
    2 k& 2 k & 0& 2k & 2k\\
    0 & 2k& 3k & 3k & 3k\\
    a_0 & a_1 & a_2 & a_3 & a_4
\end{pNiceMatrix} \]
over $ \zz_{ 4k}$ with $k \geq 3$. From the previous subsection we know that if $g$ has three rows and $N\geq9$, then either it is a member of the family \eqref{family1} or it is a direct sum with a factor $C_{ 2 \Delta_2}$. In both of these cases there should be three columns with $\gcdvar_i=M_2$. We see that that if consider the last three rows of $g$, it is not possible to choose $a_i$'s in such a way, that there are three columns with $\gcdvar_i=M_2$. Therefore, the linear code generated by $g$ cannot be thin. 

The only option left now is when each triple of rows of $g$ has a column of zeros, i.e. 
  \[ g=\begin{pNiceMatrix}
    d_1 & 0 & 0 & 0& -d_1\\
    0& d_2 & 0& 0 & -d_2 \\
    0 & 0& d_3 & 0 & -d_3  \\ 
0 & 0& 0& d_4 & -d_4
\end{pNiceMatrix}. \]
The fourth hyperplane must contain all the points $(\pm 1,\pm 1,\pm 1,\pm 1)$. This leads to all $d_i=M_2$, i.e. $g$ is just a multiple of the Case 1 from Table \ref{sporadictable}. 
\end{proof}

This finishes the   classification of the four-dimensional thin  simplices.

\appendix

\section{Search parameters for Table \ref{table: higher}} \label{appendix_data}
In this appendix, we explain how the data in Table \ref{table: higher} was obtained. Thin codes are constructed recursively based on the number of rows in the generating matrix, i.e. we construct a thin linear code generated by $m$ rows by starting with a thin linear code generated by $(m-1)$ rows.  For $m\geq 3$ we disregard the cases where one starts with a linear code corresponding to a lattice pyramid. This does not appear to affect the results for small values of $d$ and $N_\Delta$, and it helps to speed up the search. However, it might cause some cases to be missed. 

Checking for equivalence of two linear codes is computationally expensive. If a full equivalence check was performed, this is indicated in the last row of the table below. Otherwise, we retain only one representative from each set of linear codes that share the same $h^*$-polynomial and weight enumerator. While this omits many examples, it significantly speeds up the computation while  avoiding duplicates in the data.

In some cases, we consider matrices with entries up to $N_\Delta - 1 - \varepsilon$ instead of the full range up to $N_\Delta - 1$; this is also indicated in the table. Finally, $m$ denotes the maximum number of rows in the generating matrix that we consider.

\begin{table}[h!]
\centering
\scriptsize
\begin{tabular}{|c|cccccccccccccccccccccc|}
\hline
 
$d$ & 5 & 5 & 5 & 5 & 5 & 5 & 5 & 5 & 6 & 6 & 6 & 6 & 6 & 6 & 6 & 7 & 7 & 7 & 7 & 7 & 7 & 7 \\
$N_\Delta$ & 2 & 3 & 4 & 5 & 6 & 8 & 9 & 10 & 2 & 3 & 4 & 5 & 6 & 8 & 9 & 2 & 3 & 4 & 5 & 6 & 7 & 8 \\
m & 4 & 4 & 4 & 4 & 4 & 3 & 3 & 3 & 5 & 4 & 4 & 4 & 4 & 2 & 3 & 6 & 6 & 3 & 2 & 2 & 3 & 2 \\
$\varepsilon$ & 0 & 0 & 0 & 0 & 0 & 0 & 0 & 0 & 0 & 0 & 0 & 0 & 0 & 1 & 1 & 0 & 0 & 0 & 0 & 0 & 0 & 3 \\
= & full & full & full & full & full &  &  &  & full &  &  &  &  &  &  &  &  &  &  &  &  &  \\
\hline
\end{tabular}
\caption{Search parameters}
\label{tab:transposed}
\end{table}


\bibliographystyle{alpha}
\bibliography{zotero}

@article {eisenbrand,
	AUTHOR = {Eisenbrand, Friedrich and Shmonin, Gennady},
	TITLE = {Carath\'{e}odory bounds for integer cones},
	JOURNAL = {Oper. Res. Lett.},
	FJOURNAL = {Operations Research Letters},
	VOLUME = {34},
	YEAR = {2006},
	NUMBER = {5},
	PAGES = {564--568},
	ISSN = {0167-6377,1872-7468},
	MRCLASS = {90C57 (90C10)},
	MRNUMBER = {2238405},
	MRREVIEWER = {Rainer\ Burkard},
	DOI = {10.1016/j.orl.2005.09.008},
	URL = {https://doi.org/10.1016/j.orl.2005.09.008},
}

@article {flatnessconstant,
	AUTHOR = {Averkov, Gennadiy and Hofscheier, Johannes and Nill, Benjamin},
	TITLE = {Generalized flatness constants, spanning lattice polytopes,
	and the {G}romov width},
	JOURNAL = {Manuscripta Math.},
	FJOURNAL = {Manuscripta Mathematica},
	VOLUME = {170},
	YEAR = {2023},
	NUMBER = {1-2},
	PAGES = {147--165},
	ISSN = {0025-2611,1432-1785},
	MRCLASS = {52B20 (53D05)},
	MRNUMBER = {4533481},
	MRREVIEWER = {Andr\'{e}s\ R.\ Vindas-Mel\'{e}ndez},
	DOI = {10.1007/s00229-021-01363-x},
	URL = {https://doi.org/10.1007/s00229-021-01363-x},
}

@article{padrol2013neighborly,
	title={Neighborly and almost neighborly configurations, and their duals},
	author={Padrol, Arnau},
	journal={arXiv preprint arXiv:1304.7186},
	year={2013}
}

@article{borger_thin_2023,
    author = {Borger, Christopher and Kretschmer, Andreas and Nill, Benjamin},
    title = "{Thin Polytopes: Lattice Polytopes With Vanishing Local h*-Polynomial}",
    journal = {International Mathematics Research Notices},
    pages = {rnad231},
    year = {2023},
    month = {10},
    abstract = "{In this paper, we study the novel notion of thin polytopes: lattice polytopes whose local \\$h^\\{*\\}\\$-polynomials vanish. The local \\$h^\\{*\\}\\$-polynomial is an important invariant in modern Ehrhart theory. Its definition goes back to Stanley with fundamental results achieved by Karu, Borisov, and Mavlyutov; Schepers; and Katz and Stapledon. The study of thin simplices was originally proposed by Gelfand, Kapranov, and Zelevinsky, where in this case the local \\$h^\\{*\\}\\$-polynomial simply equals its so-called box polynomial. Our main results are the complete classification of thin polytopes up to dimension 3 and the characterization of thinness for Gorenstein polytopes. The paper also includes an introduction to the local \\$h^\\{*\\}\\$-polynomial with a survey of previous results.}",
    issn = {1073-7928},
    doi = {10.1093/imrn/rnad231},
    url = {https://doi.org/10.1093/imrn/rnad231},
    eprint = {https://academic.oup.com/imrn/advance-article-pdf/doi/10.1093/imrn/rnad231/51824740/rnad231.pdf},
}

@article{nill_gorenstein_2013,
	title = {Gorenstein polytopes and their stringy {E}-functions},
	volume = {355},
	issn = {0025-5831, 1432-1807},
	url = {http://link.springer.com/10.1007/s00208-012-0792-2},
	doi = {10.1007/s00208-012-0792-2},
	abstract = {Inspired by ideas from algebraic geometry, Batyrev and the ﬁrst named author have introduced the stringy E-function of a Gorenstein polytope. We prove that this a priori rational function is actually a polynomial, which is part of a conjecture of Batyrev and the ﬁrst named author. The proof relies on a comparison result for the lattice point structure of a Gorenstein polytope P, a face F of P and the face of the dual Gorenstein polytope corresponding to F. In addition, we study joins of Gorenstein polytopes and introduce the notion of an irreducible Gorenstein polytope. We show how these concepts relate to the decomposition of nef-partitions.},
	language = {en},
	number = {2},
	urldate = {2022-11-28},
	journal = {Mathematische Annalen},
	author = {Nill, Benjamin and Schepers, Jan},
	month = feb,
	year = {2013},
	pages = {457--480},
	file = {Nill and Schepers - 2013 - Gorenstein polytopes and their stringy E-functions.pdf:/Users/vadym/Zotero/storage/YKR9NLNJ/Nill and Schepers - 2013 - Gorenstein polytopes and their stringy E-functions.pdf:application/pdf},
}

@article{katz_local_2016,
	title = {Local h-polynomials, invariants of subdivisions, and mixed {Ehrhart} theory},
	volume = {286},
	issn = {0001-8708},
	url = {https://www.sciencedirect.com/science/article/pii/S0001870815003497},
	doi = {10.1016/j.aim.2015.09.010},
	abstract = {There are natural polynomial invariants of polytopes and lattice polytopes coming from enumerative combinatorics and Ehrhart theory, namely the h- and h⁎-polynomials, respectively. In this paper, we study their generalization to subdivisions and lattice subdivisions of polytopes. By abstracting constructions in mixed Hodge theory, we introduce multivariable polynomials which specialize to the h-, h⁎-polynomials. These polynomials, the mixed h-polynomial and the (refined) limit mixed h⁎-polynomial have rich symmetry, non-negativity, and unimodality properties, which both refine known properties of the classical polynomials, and reveal new structure. For example, we prove a lower bound theorem for a related invariant called the local h⁎-polynomial. We introduce our polynomials by developing a very general formalism for studying subdivisions of Eulerian posets that extends the work of Stanley, Brenti and Athanasiadis on local h-vectors. In particular, we prove a conjecture of Nill and Schepers, and answer a question of Athanasiadis.},
	language = {en},
	urldate = {2023-01-19},
	journal = {Advances in Mathematics},
	author = {Katz, Eric and Stapledon, Alan},
	month = jan,
	year = {2016},
	keywords = {Ehrhart theory, Eulerian posets, Polytopes},
	pages = {181--239},
	file = {ScienceDirect Full Text PDF:/Users/vadym/Zotero/storage/NSAH2J8V/Katz and Stapledon - 2016 - Local h-polynomials, invariants of subdivisions, a.pdf:application/pdf;ScienceDirect Snapshot:/Users/vadym/Zotero/storage/P7I8HT38/S0001870815003497.html:text/html},
}

@misc{batyrev_lattice_2013,
	title = {Lattice polytopes, finite abelian subgroups in \${\textbackslash}{SL}(n,{\textbackslash}{C})\$ and coding theory},
	url = {http://arxiv.org/abs/1309.5312},
	abstract = {We consider \$d\$-dimensional lattice polytopes \${\textbackslash}Delta\$ with \$h{\textasciicircum}*\$-polynomial \$h{\textasciicircum}*\_{\textbackslash}Delta=1+h\_k{\textasciicircum}*t{\textasciicircum}k\$ for \$1{\textless}k{\textless}(d+1)/2\$ and relate them to some abelian subgroups of \${\textbackslash}SL\_\{d+1\}({\textbackslash}C)\$ of order \$1+h\_k{\textasciicircum}*=p{\textasciicircum}r\$ where \$p\$ is a prime number. These subgroups can be investigate by means of coding theory as special linear constant weight codes in \${\textbackslash}F\_p{\textasciicircum}\{d+1\}\$. If \$p =2\$, then the classication of these codes and corresponding lattice polytopes can be obtained using a theorem of Bonisoli. If \$p {\textgreater} 2\$, the main technical tool in the classification of these linear codes is the non-vanishing theorem for generalized Bernoulli numbers \$B\_\{1,{\textbackslash}chi\}{\textasciicircum}\{(r)\}\$ associated with odd characters \${\textbackslash}chi:{\textbackslash}F\_q{\textasciicircum}*{\textbackslash}to{\textbackslash}C{\textasciicircum}*\$ where \$q=p{\textasciicircum}r\$. Our result implies a complete classification of all lattice polytopes whose \$h{\textasciicircum}*\$-polynomial is a binomial.},
	urldate = {2023-03-02},
	publisher = {arXiv},
	author = {Batyrev, Victor and Hofscheier, Johannes},
	month = sep,
	year = {2013},
	note = {arXiv:1309.5312 [math]},
	keywords = {Mathematics - Combinatorics, 52B20},
	file = {arXiv Fulltext PDF:/Users/vadym/Zotero/storage/7IMHK8G2/Batyrev and Hofscheier - 2013 - Lattice polytopes, finite abelian subgroups in \$S.pdf:application/pdf;arXiv.org Snapshot:/Users/vadym/Zotero/storage/6ELAN9FG/1309.html:text/html},
}

@article{betke_lattice_1985,
	title = {Lattice points in lattice polytopes},
	volume = {99},
	issn = {1436-5081},
	url = {https://doi.org/10.1007/BF01312545},
	doi = {10.1007/BF01312545},
	abstract = {IfK is the underlying point-set of a simplicial complex of dimension at mostd whose vertices are lattice points, and ifG(K) is the number of lattice points inK, then the lattice point enumeratorG(K,t)=1+∑n⩾1G(nK)tntakes the formC(K, t)/(1−t)d+1, for some polynomialC(K, t). Here,C(K, t) is expressed as a sum of local terms, one for each face ofK. WhenK is a polytope or its boundary, there result inequalities between the numbersGr(K), whereG(n K)=∑r=0dnrGr(K).},
	language = {en},
	number = {4},
	urldate = {2023-04-03},
	journal = {Monatshefte für Mathematik},
	author = {Betke, U. and McMullen, P.},
	month = dec,
	year = {1985},
	keywords = {Lattice Point, Lattice Polytopes, Local Term, Result Inequality, Simplicial Complex},
	pages = {253--265},
	file = {Full Text PDF:/Users/vadym/Zotero/storage/8QHQ6ZTY/Betke and McMullen - 1985 - Lattice points in lattice polytopes.pdf:application/pdf},
}

@misc{villegas_mixed_2019,
	title = {Mixed {Hodge} numbers and factorial ratios},
	url = {http://arxiv.org/abs/1907.02722},
	abstract = {This note is an extended version of the slides for my talk with the same title at the \{{\textbackslash}it Arithmetic, geometry, and modular forms: a conference in honour of Bill Duke\} in June 2019 at the ETH in Z"urich. The results presented concern three geometric criteria for the integrality of factorial ratios, numbers such as (30n)!n!/(6n)!(10n)!(15n)!, which are integral in a non-immediate way for all n. This work is an offshoot of an ongoing project on hypergeometric motives joint with D. Roberts and M. Watkins.},
	urldate = {2023-06-18},
	publisher = {arXiv},
	author = {Villegas, Fernando Rodriguez},
	month = jul,
	year = {2019},
	note = {arXiv:1907.02722 [math]},
	keywords = {Mathematics - Number Theory},
	file = {arXiv.org Snapshot:/Users/vadym/Zotero/storage/YNDTBUKY/1907.html:text/html;Full Text PDF:/Users/vadym/Zotero/storage/AL6APQZ5/Villegas - 2019 - Mixed Hodge numbers and factorial ratios.pdf:application/pdf},
}

@article{stanley_subdivisions_1992,
	title = {Subdivisions and {Local} h-{Vectors}},
	volume = {5},
	issn = {0894-0347},
	url = {https://www.jstor.org/stable/2152711},
	doi = {10.2307/2152711},
	abstract = {In Part I a general theory of f-vectors of simplicial subdivisions (or triangulations) of simplicial complexes is developed, based on the concept of local h-vector. As an application, we prove that the h-vector of a Cohen-Macaulay complex increases under "quasi-geometric" subdivision, thus establishing a special case of a conjecture of Kalai and this author. Techniques include commutative algebra, homological algebra, and the intersection homology of toric varieties. In Part II we extend the work of Part I to more general situations. First a formal generalization of subdivision is given based on incidence algebras. Special cases are then developed, in particular one based on subdivisions of Eulerian posets and involving generalized h-vectors. Other cases deal with Kazhdan-Lusztig polynomials, Ehrhart polynomials, and a q-analogue of Eulerian posets. Many applications and examples are given throughout.},
	number = {4},
	urldate = {2023-08-13},
	journal = {Journal of the American Mathematical Society},
	author = {Stanley, Richard P.},
	year = {1992},
	note = {Publisher: American Mathematical Society},
	pages = {805--851},
}

@article{ehrhart_sur_1962,
	title = {Sur les polyèdres rationnels homothétiques à n dimensions},
	volume = {254},
	issn = {0001-4036},
	journal = {Comptes Rendus Hebdomadaires des Séances de l'Académie des Sciences},
	author = {Ehrhart, Eugène},
	year = {1962},
	mrnumber = {130860},
	pages = {616--618},
}

@book{GKZ,
	series = {Mathematics: {Theory} \& {Applications}},
	title = {Discriminants, resultants, and multidimensional determinants},
	isbn = {0-8176-3660-9},
	url = {https://doi.org/10.1007/978-0-8176-4771-1},
	publisher = {Birkhäuser Boston, Inc., Boston, MA},
	author = {Gelfand, I. M. and Kapranov, M. M. and Zelevinsky, A. V.},
	year = {1994},
	mrnumber = {1264417},
	doi = {10.1007/978-0-8176-4771-1},
	keywords = {GKZ},
}

@article{borisov_string_2003,
	title = {String cohomology of {Calabi}–{Yau} hypersurfaces via mirror symmetry},
	volume = {180},
	issn = {0001-8708},
	url = {https://www.sciencedirect.com/science/article/pii/S0001870803000070},
	doi = {10.1016/S0001-8708(03)00007-0},
	abstract = {We propose a construction of string cohomology spaces for Calabi–Yau hypersurfaces that arise in Batyrev's mirror symmetry construction. The spaces are defined explicitly in terms of the corresponding reflexive polyhedra in a mirror-symmetric manner. We draw connections with other approaches to the string cohomology, in particular with the work of Chen and Ruan.},
	number = {1},
	urldate = {2023-09-25},
	journal = {Advances in Mathematics},
	author = {Borisov, Lev A. and Mavlyutov, Anvar R.},
	month = dec,
	year = {2003},
	keywords = {Intersection cohomology, Mirror symmetry, Toric varieties},
	pages = {355--390},
	file = {ScienceDirect Full Text PDF:/Users/vadym/Zotero/storage/ADYL676Z/Borisov and Mavlyutov - 2003 - String cohomology of Calabi–Yau hypersurfaces via .pdf:application/pdf;ScienceDirect Snapshot:/Users/vadym/Zotero/storage/3C584M7B/S0001870803000070.html:text/html},
}

@article{gustafsson_derangements_2020,
	title = {Derangements, {Ehrhart} theory, and local h-polynomials},
	volume = {369},
	issn = {0001-8708},
	url = {https://www.sciencedirect.com/science/article/pii/S000187082030195X},
	doi = {10.1016/j.aim.2020.107169},
	abstract = {The Eulerian polynomials and derangement polynomials are two well-studied generating functions that frequently arise in combinatorics, algebra, and geometry. When one makes an appearance, the other often does so as well, and their corresponding generalizations are similarly linked. This is this case in the theory of subdivisions of simplicial complexes, where the Eulerian polynomial is an h-polynomial and the derangement polynomial is its local h-polynomial. Separately, in Ehrhart theory the Eulerian polynomials are generalized by the h⁎-polynomials of s-lecture hall simplices. Here, we show that derangement polynomials are analogously generalized by the box polynomials, or local h⁎-polynomials, of the s-lecture hall simplices, and that these polynomials are all real-rooted. We then connect the two theories by showing that the local h-polynomials of common subdivisions in algebra and topology are realized as local h⁎-polynomials of s-lecture hall simplices. We use this connection to address some open questions on real-rootedness and unimodality of generating polynomials, some from each side of the story.},
	urldate = {2023-09-25},
	journal = {Advances in Mathematics},
	author = {Gustafsson, Nils and Solus, Liam},
	month = aug,
	year = {2020},
	keywords = {Ehrhart theory, Derangement, Lecture hall simplex, Local -polynomial, Real-rooted, Simplicial complex, Unimodal},
	pages = {107169},
	file = {ScienceDirect Snapshot:/Users/vadym/Zotero/storage/Y42AGB8L/S000187082030195X.html:text/html;Submitted Version:/Users/vadym/Zotero/storage/5VVJVMMI/Gustafsson and Solus - 2020 - Derangements, Ehrhart theory, and local h-polynomi.pdf:application/pdf},
}

@article{assarf_computing_2017,
	title = {Computing convex hulls and counting integer points with polymake},
	volume = {9},
	issn = {1867-2957},
	url = {https://doi.org/10.1007/s12532-016-0104-z},
	doi = {10.1007/s12532-016-0104-z},
	abstract = {The main purpose of this paper is to report on the state of the art of computing integer hulls and their facets as well as counting lattice points in convex polytopes. Using the polymake system we explore various algorithms and implementations. Our experience in this area is summarized in ten “rules of thumb”.},
	language = {en},
	number = {1},
	urldate = {2023-11-19},
	journal = {Mathematical Programming Computation},
	author = {Assarf, Benjamin and Gawrilow, Ewgenij and Herr, Katrin and Joswig, Michael and Lorenz, Benjamin and Paffenholz, Andreas and Rehn, Thomas},
	month = mar,
	year = {2017},
	keywords = {52-04, 90-08, Convex hull computation, Facets of integer hulls, Lattice point enumeration},
	pages = {1--38},
	file = {Full Text PDF:/Users/vadym/Zotero/storage/WQNJEBD4/Assarf et al. - 2017 - Computing convex hulls and counting integer points.pdf:application/pdf},
}

@misc{bajo_local_2023,
	title = {Local \$h{\textasciicircum}*\$-polynomials for one-row {Hermite} normal form simplices},
	url = {http://arxiv.org/abs/2309.01186},
	abstract = {The local \$h{\textasciicircum}*\$-polynomial of a lattice polytope is an important invariant arising in Ehrhart theory. Our focus in this work is on lattice simplices presented in Hermite normal form with a single non-trivial row. We prove that when the off-diagonal entries are fixed, the distribution of coefficients for the local \$h{\textasciicircum}*\$-polynomial of these simplices has a limit as the normalized volume goes to infinity. Further, this limiting distribution is determined by the coefficients for a relatively small normalized volume. We also provide a thorough analysis of two specific families of such simplices, to illustrate and motivate our main result.},
	urldate = {2023-11-20},
	publisher = {arXiv},
	author = {Bajo, Esme and Braun, Benjamin and Codenotti, Giulia and Hofscheier, Johannes and Vindas-Meléndez, Andrés R.},
	month = sep,
	year = {2023},
	note = {arXiv:2309.01186 [math]},
	keywords = {Mathematics - Combinatorics},
	file = {arXiv.org Snapshot:/Users/vadym/Zotero/storage/7AS34JJ2/2309.html:text/html;Full Text PDF:/Users/vadym/Zotero/storage/QL2GGBFM/Bajo et al. - 2023 - Local \$h^\$-polynomials for one-row Hermite normal.pdf:application/pdf},
}

@incollection{solus_local_2019,
	title = {Local h{\textasciicircum}*-polynomials of some weighted projective spaces},
	isbn = {978-981-120-047-2},
	booktitle = {Algebraic and geometric combinatorics on lattice polytopes},
	publisher = {World Sci. Publ., Hackensack, NJ},
	author = {Solus, Liam},
	year = {2019},
	mrnumber = {3971706},
	pages = {382--399},
}

@article{roberts_hypergeometric_2022,
	title = {Hypergeometric {Motives}},
	volume = {69},
	issn = {0002-9920, 1088-9477},
	url = {https://www.ams.org/notices/202206/rnoti-p914.pdf},
	doi = {10.1090/noti2491},
	language = {en},
	number = {06},
	urldate = {2023-11-26},
	journal = {Notices of the American Mathematical Society},
	author = {Roberts, David P. and Rodriguez-Villegas, Fernando},
	month = jun,
	year = {2022},
	pages = {1},
	file = {Roberts and Rodriguez-Villegas - 2022 - Hypergeometric Motives.pdf:/Users/vadym/Zotero/storage/4IR2RLZI/Roberts and Rodriguez-Villegas - 2022 - Hypergeometric Motives.pdf:application/pdf},
}

@article{batyrev_multiples_2007,
	title = {Multiples of lattice polytopes without interior lattice points},
	volume = {7},
	issn = {1609-3321,1609-4514},
	url = {https://doi.org/10.17323/1609-4514-2007-7-2-195-207},
	doi = {10.17323/1609-4514-2007-7-2-195-207},
	number = {2},
	journal = {Moscow Mathematical Journal},
	author = {Batyrev, Victor and Nill, Benjamin},
	year = {2007},
	mrnumber = {2337878},
	pages = {195--207, 349},
}

@article{batyrev_mirror_1996,
	title = {Mirror duality and string-theoretic {Hodge} numbers},
	volume = {126},
	issn = {0020-9910,1432-1297},
	url = {https://doi.org/10.1007/s002220050093},
	doi = {10.1007/s002220050093},
	number = {1},
	journal = {Inventiones Mathematicae},
	author = {Batyrev, Victor V. and Borisov, Lev A.},
	year = {1996},
	mrnumber = {1408560},
	pages = {183--203},
}

@book{the_sage_developers_sagemath_2022,
	title = {{SageMath}, the {Sage} {Mathematics} {Software} {System} ({Version} 9.7)},
	author = {{The Sage Developers}},
	year = {2022},
	annote = {{\textbackslash}tt https://www.sagemath.org},
}

@incollection{karu_ehrhart_2008,
	series = {Contemp. {Math}.},
	title = {Ehrhart analogue of the h-vector},
	volume = {452},
	isbn = {978-0-8218-4173-0},
	url = {https://doi.org/10.1090/conm/452/08779},
	booktitle = {Integer points in polyhedra—geometry, number theory, representation theory, algebra, optimization, statistics},
	publisher = {Amer. Math. Soc., Providence, RI},
	author = {Karu, Kalle},
	year = {2008},
	mrnumber = {2405769},
	doi = {10.1090/conm/452/08779},
	pages = {139--146},
}

@misc{oeis_foundation_inc_entry_2024,
	title = {Entry {A048240} in {The} {On}-{Line} {Encyclopedia} of {Integer} {Sequences}},
	author = {{OEIS Foundation Inc.}},
	year = {2024},
	annote = {Published electronically at http://oeis.org},
}

@article{henk_lower_2009,
	title = {Lower bounds on the coefficients of {Ehrhart} polynomials},
	volume = {30},
	issn = {0195-6698,1095-9971},
	url = {https://doi.org/10.1016/j.ejc.2008.02.009},
	doi = {10.1016/j.ejc.2008.02.009},
	number = {1},
	journal = {European Journal of Combinatorics},
	author = {Henk, Martin and Tagami, Makoto},
	year = {2009},
	mrnumber = {2460218},
	pages = {70--83},
}
\end{document}